\documentclass[a4paper, reqno]{amsart}

\usepackage[all]{xy}
\usepackage[english]{babel}
\usepackage[utf8]{inputenc}
\usepackage{amssymb,amsmath,amsthm}
\usepackage{amsfonts}
\usepackage{graphicx}
\usepackage{psfrag}
\usepackage[dvipsnames]{xcolor}
\usepackage[hidelinks]{hyperref}
\usepackage{csquotes}
\usepackage[alphabetic]{amsrefs}

\usepackage{cancel}
\usepackage[normalem]{ulem}

\allowdisplaybreaks

\newcommand{\e}{\epsilon}
\newcommand{\s}{\sigma}

\newcommand{\md}{\mathrm d}

\newcommand{\Ker}{\mathrm{Ker}}
\newcommand{\supp}{\mathrm{supp}}
\newcommand{\sign}{\mathrm{sign}}

\renewcommand{\a}{\alpha}
\newcommand{\de}{\delta}
\renewcommand{\b}{\beta}
\renewcommand{\d}{\partial}

\newcommand{\abs}[1]{\left\lvert#1\right\rvert}
\newcommand{\norm}[1]{\left\lVert#1\right\rVert}
\newcommand{\ldr}[1]{\left\langle #1 \right\rangle}

\newcommand{\A}{\mathcal A}
\newcommand{\B}{\mathcal B}
\newcommand{\F}{\mathcal F}
\newcommand{\cH}{\mathcal H}
\newcommand{\M}{\mathcal M}

\renewcommand{\Im}{\mathrm{Im} \,}
\renewcommand{\Re}{\mathrm{Re} \,}

\newcommand{\scl}{{\mathrm{sc}}}
\newcommand{\loc}{{\mathrm{loc}}}
\newcommand{\Hsc}{H_{\scl}}
\newcommand{\Psisc}{\Psi_\scl}

\newcommand{\semi}{\hbar}

\newcommand{\Hscbar}{\bar H_{\scl}}
\newcommand{\Hscdot}{\dot H_{\scl}}

\renewcommand{\P}{\mathrm{P}}
\newcommand{\Sph}[0]{\mathbb{S}}	

\theoremstyle{plain}
\newtheorem{thm}{Theorem}[section]
\newtheorem{prop}[thm]{Proposition}
\newtheorem{lemma}[thm]{Lemma}
\newtheorem{cor}[thm]{Corollary}
\theoremstyle{definition}
\newtheorem{definition}[thm]{Definition}

\newtheorem{remark}[thm]{Remark}

\numberwithin{equation}{section}

\newcommand{\R}[0]{\mathbb{R}}							% Real numbers
\newcommand{\C}[0]{\mathbb{C}}							% Complex numbers
\newcommand{\N}[0]{\mathbb{N}}							% Natural numbers
\newcommand{\Z}[0]{\mathbb{Z}}							% Integers
\title[Dual modes in Kerr spacetimes and the Whiting transform]{Dual modes in Kerr spacetimes and the Whiting transform: Mode stability revisited}

\author{Oliver Petersen}
\address{Department of Mathematics, Stockholm University, Albanovägen 28, 10691 Stockholm, Sweden}
\email{oliver.petersen@math.su.se}

\author{Andr\'{a}s Vasy}
\address{Department of Mathematics, Stanford University, CA 94305-2125, USA}
\email{andras@math.stanford.edu}

\subjclass{35L05, 35P25, 58J45, 83C30}

\begin{document}
\hbadness=100000
\vbadness=100000

\begin{abstract}
  The purpose of the paper is to place Whiting's classical growing mode stability
  argument, extended to real frequencies by Shlapentokh-Rothman for
  the scalar wave equation and by Andersson, Ma, Paganini and Whiting
  in general, in the framework of classical PDE theory. The key steps
  are: a description of the dual or adjoint modes, a singular phase
  space pairing argument which is technically executed via the Fourier
  transform, followed by a standard unique continuation
  result.

  One part of our description of the dual modes connects them
  directly to the standard mode solutions which are smooth over the
  event horizon. In the zero spin case, there is a geometric interpretation of this: quasinormal modes (for non-zero
  real frequencies) which are smooth across the future event horizon can be extended as distributional QNM solutions by 0 through the past event horizon. 
We also describe
  the behavior of mode solutions at the bifurcate sphere.
\end{abstract}

\maketitle

\tableofcontents
\begin{sloppypar}

  \section{Introduction}

  In 1989, Whiting \cite{W1989} showed that there are no quasinormal modes with positive imaginary part for the Teukolsky equation in subextremal Kerr spacetimes.
In 2013, Schlapenkoth--Rothman \cite{S2015} extended Whiting's result to show that
no quasinormal modes with real frequency exist for the wave
equation. Slightly later, Andersson, Ma, Paganini and Whiting
\cite{AMPW2017} extended (with a
different argument) mode stability for the Teukolsky equation.
More recently, Teixeira da Costa extended these results to the extremal case \cite{TdC2020}.
All these results use versions of what is now known as the Whiting
transform, transforming the relevant ODE by very surprising algebraic
identities. We remark that other recent papers investigating the symmetries of the Teukolsky equation are by Casals and Teixeira da Costa in \cite{CTdC2022} and very recently by Hollands, Ishibashi and Zahn in \cite{HIZ2026}.

The purpose of this paper is to show that the Whiting transform
actually can be replaced by the standard Fourier transform and
distribution theory, giving a computationally much simpler proof of
mode stability for subextremal Kerr black holes for real
frequencies, which in addition can proceed {\em without} a full
separation of variables. As dual or adjoint modes are a major ingredient of our
argument, we also analyze these, including on the whole spacetime,
even in cases (such as Kerr-de Sitter) in which we cannot prove mode stability.
Also, from this point of view it is clear why the method does not work for the Klein-Gordon equation (indeed, mode stability is known to be false for the Klein-Gordon equation).

Fix two parameters $a \in \R$ and $m > 0$, such that $\abs a \leq m$.
The domain of outer communication in the subextremal (if $\abs a < m$) or extremal (if $\abs a = m$) \emph{Kerr spacetime} is given in Boyer-Lindquist coordinates $(t, r, \phi, \theta)$ by the real analytic spacetime
\[
	M
		:= \R_t \times (r_+, \infty)_r \times S^2_{\phi, \theta},
\]
with real analytic metric
\begin{equation} \label{eq: g}
\begin{split}
	g
		&= (r^2 + a^2 \cos^2(\theta))\left( \frac{\md r^2}{\mu(r)} + \md \theta^2 \right) \\*
		&\quad + \frac{\sin^2(\theta)}{\left(r^2 + a^2 \cos^2(\theta)\right)}\left(a \md t - \left(r^2 + a^2\right)\md \phi\right)^2 \\*
		&\quad - \frac{\mu(r)}{\left(r^2 + a^2 \cos^2(\theta)\right)}\left(\md t - a \sin^2(\theta)\md \phi\right)^2,
\end{split}
\end{equation}
where
\[
	\mu(r)
		= r^2 - 2mr + a^2.
\]
This expression models a black hole centered at $r = 0$ in spherical coordinates.
We call $a$ as the \emph{angular momentum} and $m$ the \emph{mass} of the black hole.
Note that \eqref{eq: g} is not defined at the north and south poles $\theta = 0$ and $\pi$, however, it is straightforward to check that \eqref{eq: g} extends real analytically to the north and the south poles.
Furthermore, the expression \eqref{eq: g} is singular at the roots of $\mu$, given by
\[
	r_\pm
		= m \pm \sqrt{m^2 - a^2}.
\]
The number $r_-$ is the radius of the Cauchy horizon (also called the inner event horizon) and $r_+$ is the radius of the event horizon.
In extremal horizons, where $\abs a = m$, then $r_- = r_+$ and the horizons coincide, and $\mu$ has a double root there.

In this paper, we are considering a certain class of solutions to the Teukolsky equation in the domain of outer communication with prescribed asymptotics as $r \to r_+$ and as $r \to \infty$.

Fix an $s \in \R$.
Following e.g.~\cite{AMPW2017}, the \emph{Teukolsky operator} is given by (with the opposite sign convention relative to \cite{AMPW2017})
\begin{align*}
	\mathrm L_s
		&:= - \d_r \mu(r) \d_r + \frac1{\mu(r)} \left( (r^2 + a^2)\d_t + a \d_\phi - (r - m) s \right)^2 + 4 s (r + i a \cos(\theta) ) \d_t \\*
		&\qquad - \frac1{\sin(\theta)}\d_\theta \left( \sin(\theta) \d_\theta \right) - \frac1{\sin^2(\theta)}\left(a \sin^2(\theta)\d_t + \d_\phi + i s \cos(\theta) \right)^2
\end{align*}
in the Boyer-Lindquist coordinates $(t, r, \phi, \theta)$.

\begin{remark}
For $s = 0$, the Teukolsky operator essentially reduces to the standard scalar wave equation:
\[
	\Box 
		= (r^2 + a^2 \cos^2(\theta))^{-1} \mathrm L_0.
\]
\end{remark}

\begin{definition}[A mode solution]
Fix $\s \in \R$ and $k \in \Z$.
A solution $u$ to $\mathrm L_s v = 0$ of the form
\[
	v(t, r, \phi, \theta)
		= e^{-it \s - ik\phi} u(r, \theta),
\]
where $e^{-ik\phi}u(r, \theta)$ extends to a smooth function on $S^2_{\phi, \theta}$, is called a \emph{mode solution to the Teukolsky equation with parameters $(\s, k)$.}
\end{definition}

Let us now choose a smooth function $H: \R \backslash\{r_-, r_+\} \to \C$, such that
\begin{equation} \label{eq: H}
	H'(r)
		= \frac{i \left( \left( r^2 + a^2 \right) \s + ak \right) + (r - m)s}{\mu(r)}.
\end{equation}
We will work with the following regularity assumption:

\begin{definition}[A quasinormal mode] \label{def: QNM}
Let $u(r, \theta)$ be a mode solution to the Teukolsky equation with parameters $(\s, k) \in \C \times \Z$. 
We say that $u$ is a \emph{quasinormal mode solution with parameters $(\s, k)$} if it has no incoming radiation, which means that
\[
	e^{H(r)} u(r, \theta)
\]
is a smooth function on $[r_+, \infty) \times S^2$ and 
\[
	e^{- H(r)} u(r, \theta)
\]
is conormal at $r = \infty$, which means that there is an $a \in \R$ such that 
\[
	(r \d_r)^m  \left( r^a e^{- H(r)} u(r, \theta) \right)
\]
is uniformly bounded in $(r_+ + 1, \infty) \times S^2$ for every $m \in \N_0$.
\end{definition}

\begin{remark} \label{rmk: optimal decay}
From the assumption of conormality and the fact that $u$ satisfies the Teukolsky equation, one can easily derive much more precise asymptotics at $r = \infty$, cf.\ \cite{AMPW2017}.
This will however not be necessary for the argument.
\end{remark}

The mode stability for the Kerr spacetime is the non-existence of certain modes.
The goal of this paper is to give a simple proof of the following result:

\begin{thm}[Mode stability for real frequencies]\label{thm: QNM vanishes}
Let $s \in \R$. 
If $u$ is a quasinormal mode solution to the Teukolsky equation $\mathrm L_s u = 0$ with parameters $(\s, k) \in  (\R\setminus\{0\}) \times \Z$, then $u = 0$.
\end{thm}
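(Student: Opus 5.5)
The plan follows the three steps announced in the abstract: dual modes, a singular pairing carried out with the Fourier transform, and unique continuation.

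\emph{Step 1: reformulate near the horizon.} Set $w := e^{H(r)}u$. By Definition~\ref{def: QNM}, $w$ is smooth up to $r = r_+$. Conjugating $\mathrm L_s$ (with $\d_t \mapsto -i\s$, $\d_\phi \mapsto -ik$) by $e^{H}$ gives an operator $P_\s$ acting on functions of $(r,\theta)$. Because of the choice \eqref{eq: H}, the singular term $\mu^{-1}\bigl((r^2+a^2)\d_t + a\d_\phi - (r-m)s\bigr)^2$ cancels. What remains is an operator of the form $-\d_r\mu\d_r + (\text{first-order term in } \d_r \text{ with coefficient } 2i((r^2+a^2)\s + ak) + \dots) + \text{angular part}$, which degenerates only through the factor $\mu$ at $r_+$.

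\emph{Step 2: construct a dual mode.} The formal adjoint $P_\s^*$ of $P_\s$ with respect to $\md r\,\sin\theta\,\md\theta$ has, at $r=r_+$, the indicial roots $0$ and $\lambda = -2i\bigl((r_+^2+a^2)\s + ak\bigr)/(r_+-r_-) + (\text{real shift from } s)$. The aim is to exhibit an explicit distribution $\tilde w$ in the kernel of $P^*_\s$, supported in $r \ge r_+$, built from $w$. The natural candidate is
\[
\tilde w = (r - r_+)_+^{\lambda}\,\overline{\text{(something built from } w)} \quad\text{or}\quad e^{\pm 2\Re H}\bar u\, H(r-r_+).
\]
In the zero-spin case this corresponds to extending by zero across the past horizon; the abstract says exactly this. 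Concretely, I expect $\tilde w := e^{-\overline{H}}\mu^{s}\bar u$, or a similar expression, to solve the adjoint equation for $r > r_+$. Because $\s \neq 0$ is real, its behaviour at $r_+$ is $(r-r_+)^{\lambda}$ with $\Re\lambda$ fixed by $s$ and with a nonzero imaginary part. Extending it by $0$ to $r < r_+$ then gives a distribution $\tilde w_+$. The imaginary part makes $(r-r_+)_+^{\lambda}$ a homogeneous distribution whose Fourier transform in $r-r_+$ is smooth away from the origin and concentrated on one half-line. I would verify $P^*_\s \tilde w_+ = 0$ across $r_+$ by the standard identity $x\d_x x_+^\lambda = \lambda x_+^\lambda$; $\s \neq 0$ excludes the exceptional $\lambda \in -\N$.

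\emph{Step 3: the pairing.} Pair $P_\s w = 0$ against a cutoff times $\tilde w_+$ and integrate by parts. The boundary terms at infinity vanish by conormality together with the Teukolsky asymptotics (Remark~\ref{rmk: optimal decay}). The boundary terms at $r_+$ are handled by microlocal considerations. Near $r_+$, $w$ is smooth, so its wave front set is empty. The product $w\,\tilde w_+$ is therefore well defined, and the pairing identity reduces to a statement about the Fourier transform in $r$. I would take the imaginary part of the resulting identity, as in a boundary-pairing or energy argument. It should produce a signed quantity: $\s\bigl((r_+^2+a^2)\s + ak\bigr)$, or in Whiting's language the superradiance factor, times $\int |w(r_+,\theta)|^2$. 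This quantity must vanish.

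This is exactly where the Whiting transform used to be needed: in the superradiant regime the naive energy sign fails. The hope is that pairing with the dual mode $\tilde w_+$ instead of $\bar w$ gives a sign-definite quantity for every real $\s \neq 0$. The sign should come from the Fourier support of $(r-r_+)_+^{\lambda}$, which lies on one side and is determined by $\sign\s$. I expect this step to be the main obstacle: choosing the dual object and the pairing so that no superradiant cancellation occurs. I would try pairing $w$ with the Fourier-side function $\widehat{\tilde w_+}$, whose explicit Gamma-function factor $\Gamma(\lambda+1)$ has modulus controlled by $e^{\pi\Im\lambda}$, and then read off positivity.

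\emph{Step 4: unique continuation.} Once the restriction of $w$, and hence of all its $r$-derivatives via the equation, vanishes at $r_+$, I would conclude $w \equiv 0$ near $r_+$. The ODE in $r$ degenerates there, so I would use a formal power series argument driven by the indicial roots, which is valid since $\lambda \notin \N$. Real analyticity of the coefficients, or a Carleman estimate for the elliptic-in-$(r,\theta)$ operator $P_\s$, then gives $u = 0$ on all of $(r_+,\infty)\times S^2$.
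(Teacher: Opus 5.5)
Your skeleton (horizon-regular conjugation, a dual mode supported in $r\ge r_+$ with an $(r-r_+)_+^{\lambda}$ singularity, a pairing, unique continuation) matches the paper's, and Step 2 is essentially the construction $\mathfrak u=w e^{f}\chi_+^{\alpha}$ of Section~\ref{sec:alternative-construction}. The gap is Step 3, the only place where superradiance must be overcome: it has no working mechanism, and the ones you suggest fail.

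\begin{itemize}
\item Pairing $P_\sigma w=0$ against $\chi\tilde w_+$ only gives $\langle w,[P_\sigma^*,\chi]\tilde w_+\rangle=0$. In $r>r_+$ the dual mode is (the conjugate of) $e^{-H}u$ while $w=e^{H}u$, so this is the Wronskian of $u$ with itself, which vanishes identically and carries no information.
\item The signed quantity you anticipate, a multiple of $\bigl((r_+^2+a^2)\sigma+ak\bigr)\int\lvert w(r_+,\theta)\rvert^2$, is the standard position-space pairing of Appendix~\ref{appendix:position-pairing}. It is balanced against a term from $r=\infty$ and is not definite in the superradiant regime.
\item The sign does not come from a one-sided Fourier support either. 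By Lemma~\ref{le: Fourier transform of homogen}, the transform of $x_+^\lambda$ is a multiple of $(\xi-i0)^{-\lambda-1}$. It lives on both half-lines, with squared weights $e^{\pm\pi\Im\lambda}$. Remark~\ref{remark:freq-pm-infty} shows that the contributions from $\xi\to\pm\infty$ combine to $(e^{\pi\Im\alpha}-e^{-\pi\Im\alpha})\sqrt{m^2-a^2}\,\lvert b\rvert^2$, whose sign flips exactly with superradiance.
\end{itemize}

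The missing idea is to Fourier transform the dual mode $\mathfrak u$ itself and pair $\hat{\mathfrak u}$ with itself on a frequency half-line that ends at a regular singular point.
\begin{itemize}
\item Since $\mathcal P=\mathcal A^\dagger$ has coefficients polynomial of degree at most two in $r$, $\mathcal F\mathcal P\mathcal F^{-1}$ is again a second order ODE in $\xi$.
\item After conjugation, $\xi^{-s}\hat{\mathcal P}\xi^{s}=-\partial_\xi(\xi^2+2\sigma\xi)\partial_\xi-2mi\,\xi\partial_\xi\xi+(\text{real multiplication terms})$. This is formally self-adjoint, with regular singular points at $\xi=0$ (coming from $r=\infty$) and at $\xi=-2\sigma$.
\item For $\sigma<0$, $\hat{\mathfrak u}$ is smooth on $[-2\sigma,\infty)$. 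As $\xi\to+\infty$ it is asymptotic to $b\,e^{-ir_+\xi}e^{-i(\alpha+1)\pi/2}(\xi-i)^{-\alpha-1}$, where $b$ is the coefficient of $\chi_+^\alpha$ at $r_+$.
\item Then $0=\Im\langle\xi^{-2s}\hat{\mathcal P}\hat{\mathfrak u},\hat{\mathfrak u}\rangle_{L^2(-2\sigma,\infty)}=e^{\pi\Im\alpha}\bigl(\sqrt{m^2-a^2}\,\lvert b\rvert^2+m(-2\sigma)^{2-2s}\lvert\hat{\mathfrak u}(-2\sigma)\rvert^2\bigr)$. Both terms are non-negative for every real $\sigma\neq0$, because cutting at $\xi=-2\sigma$ discards the $\xi\to-\infty$ and $\xi=0$ contributions that could cancel.
\item Hence $b=0$. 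The radial point estimate then makes $\mathfrak u$ smooth, its support makes it vanish to infinite order at $r_+$, and unique continuation gives $\mathfrak u=0$. Index $0$ then forces $u=0$.
\end{itemize}

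Two smaller points. First, $\sigma\neq0$ does not exclude $\lambda\in-\N$: take $(r_+^2+a^2)\sigma+ak=0$ (possible when $ak\neq0$) and $s\in\N_+$. This is why the paper uses $\chi_+^\alpha=x_+^\alpha/\Gamma(\alpha+1)$, which is entire in $\alpha$. Second, in that same case your Step 4 breaks down. The nonzero indicial root of the direct operator at $r_+$ is then a positive integer, so $w(r_+)=0$ does not force $w\equiv0$. The paper's route, arguing via $b=0$ for a kernel element of $\mathcal A^\dagger$ supplied by the index-$0$ Fredholm theory, is unaffected.
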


By a standard separation of variables argument, see e.g.~\cite{AMPW2017}, Theorem~\ref{thm: QNM vanishes} follows from the following statement.
For fixed parameters $(\s, k, \lambda, s) \in \R \times \Z \times (0, \infty) \times \R$, define
\begin{equation}\label{eq:separated-op-def}
	\P
		:= - \d_r \mu(r) \d_r + \frac1{\mu(r)} \left( i \left( (r^2 + a^2) \s + a k \right) + (r - m) s \right)^2 - 4 s i r \s + \lambda;
              \end{equation}
              here $\P$ is simply $\mathrm L_s$ acting on the radial
              part of the
              separated solution.
We similarly say that $u: (a, \infty) \to \C$ is conormal at $r = \infty$ if there is an $a \in \R$ such that 
\[
	(r \d_r)^m  \left( r^a e^{- H(r)} u(r) \right)
\]
is uniformly bounded in $(r_+ + 1, \infty) \times S^2$ for every $m \in \N_0$.

\begin{thm}[The ODE version]\label{thm: radial ODE}
If $u: \R \to \C$ is a smooth solution to 
\[
	\P u
		= 0,
\]
such that $e^{H(r)} u(r)$ is a smooth function on $[r_+, \infty)$ and $e^{-H(r)}u(r)$ is conormal at $r = \infty$.
Then $u = 0$.
\end{thm}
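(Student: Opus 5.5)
We follow the strategy announced in the abstract: dual modes, a singular pairing carried out with the Fourier transform, and unique continuation. The adjoint $\P^*$ (formal transpose with respect to $\md r$, conjugated) is an operator of the same type with $\s, s$ replaced by suitable conjugates. We look for a dual mode $w$ with $\P^* w = 0$ on $(r_+,\infty)$. Near $r_+$ it should be the ``other'' local solution, i.e.\ $w \sim e^{-\bar H}$ times smooth. Equivalently, and more naturally, we take $w$ to be a distribution supported in $[r_+,\infty)$ that solves $\P^* w = 0$ across $r=r_+$; the abstract's remark about extending by $0$ through the past horizon suggests exactly this. Concretely:
\begin{itemize}
\item Conjugate by $e^{H}$ so that the quasinormal mode $\tilde u = e^{H}u$ is smooth at $r_+$.
\item In the conjugated operator $\tilde\P$, the indicial roots at $r_+$ are $0$ and $\alpha = -2\,\mathrm{Res}_{r_+}H' $. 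Since $\s\in\R\setminus\{0\}$ and $\mu'(r_+)>0$ in the subextremal case, $\Re\alpha = -s$ and $\Im\alpha \neq 0$ (the superradiant value $\s(r_+^2+a^2)+ak=0$ has to be handled separately).
\item Because $\Im\alpha\neq 0$, the function $(r-r_+)_+^{\alpha}$ times a smooth function is a well-defined distribution solving the transposed equation across $r_+$, with no delta terms.
\end{itemize}

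\emph{Step 1.} First we treat $r_+$ and $\infty$ in turn.
\begin{itemize}
\item \emph{At $r_+$.} Construct a distributional solution $w$ of $\tilde\P^{T} w=0$ on $\R$ (or near $[r_+,\infty)$) that is supported in $[r_+,\infty)$. It is constructed by a Frobenius series near $r_+$ and continued by ODE theory.
\item \emph{At $\infty$.} $\tilde u$ is conormal with oscillation $e^{2i\s r}$-type behaviour relative to $e^{H}$. So we choose the dual solution with the opposite (outgoing-dual) behaviour.
\end{itemize}

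\emph{Step 2 (pairing).} We pair: $0=\langle \tilde\P\tilde u, w\rangle - \langle \tilde u,\tilde\P^T w\rangle$. This pairing is singular, because $w$ is only conormal at $r_+$ and $\tilde u$ oscillates at infinity. To make it rigorous, insert a cutoff $\chi_\epsilon$ at large $r$, or better, pass to a variable in which the oscillation becomes a Fourier phase. The exponents are complex, so the products at $r_+$ are defined via the Fourier transform as a boundary value, $(x+i0)^\alpha$-type. The resulting boundary term at $r=\infty$ is then a Wronskian-type quantity. With the correct choice of $w$, its vanishing forces the leading asymptotic coefficient of $\tilde u$ at infinity to vanish. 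The sign of $\s\neq0$ enters here: for $\s=0$ the boundary term degenerates. For Klein--Gordon the corresponding boundary term has no definite structure, which explains why the argument fails there.

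\emph{Step 3.} Once the leading coefficient vanishes, conormality together with the ODE recursion at the irregular singular point $r=\infty$ shows that $\tilde u$ vanishes to infinite order at $\infty$. Unique continuation for the ODE from the irregular singular point, i.e.\ asymptotic uniqueness of the decaying solution, then gives $u\equiv 0$. Alternatively, the pairing can be arranged so that it yields the vanishing of $\tilde u$ and $\tilde u'$ at some regular point, and standard ODE uniqueness finishes.

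\emph{Main obstacle.} The crux is Step 2. It requires justifying the singular pairing at both ends, and in particular making the boundary term at infinity come out as a nondegenerate multiple of the data of $\tilde u$. My first attempt would be to Fourier transform in $r$ after conjugation: the ODE has polynomial coefficients, so the transform turns it into another ODE. There $w$ becomes a smooth function and the $r_+$-singularity becomes decay, which makes the pairing an honest integral and isolates the boundary contribution at the frequency $2\s$.
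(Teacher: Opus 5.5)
There is a genuine gap in Step 2, and it is where all the content of mode stability lies. Your Step 2 is the bilinear Lagrange identity between the direct mode $\tilde u=e^{H}u$ and a dual solution $w$ of $\A^\dagger w=0$ supported in $[r_+,\infty)$. But you have no freedom left to ``choose'' $w$. Support in $[r_+,\infty)$ already confines $w$ to a one-dimensional space, the multiples of the $(r-r_+)_+^{\alpha}\cdot C^\infty$ branch, so you cannot also prescribe its behaviour at infinity. Moreover, by the index-$0$ Fredholm theory, the existence of a $w$ with both properties is equivalent to the existence of the mode you want to exclude.

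Worse, if $u$ is a quasinormal mode, this dual solution is exactly $\mathfrak u=e^{-H}u=e^{-2H}\tilde u$ extended by $0$ (Section~\ref{sec:alternative-construction}). For $w=e^{-H}u'$ one has $w\,\A\tilde u-\tilde u\,\A^\dagger w=u'\P u-u\P u'=\d_r\bigl(\mu(u\,\d_r u'-u'\,\d_r u)\bigr)$ on $(r_+,\infty)$, which vanishes identically for $u'=u$. So the boundary term you hope to extract at $r=\infty$ is identically zero and carries no information. A bilinear Wronskian identity between two solutions contains no positivity, and mode stability needs a sign-definite identity; that is precisely what is unavailable for Klein--Gordon.

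The missing idea is a \emph{sesquilinear self-pairing of the dual mode on a frequency half-line}. The paper Fourier transforms $\mathfrak u$ (tempered, supported in $r\geq r_+$). It observes that $\xi^{-s}\hat{\mathcal P}\xi^{s}$ is formally self-adjoint with regular singular points at $\xi=0$ and $\xi=-2\sigma$. For $\sigma<0$, $\hat{\mathfrak u}$ is smooth on $[-2\sigma,\infty)$, with asymptotics $b\,e^{-ir_+\xi}e^{-i(\alpha+1)\pi/2}(\xi-i)^{-\alpha-1}$ as $\xi\to+\infty$. Here $b$ is the coefficient of the leading singularity $b\,\chi_+^{\alpha}(r-r_+)$ at the horizon. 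Computing $\Im\langle\xi^{-2s}\hat{\mathcal P}\hat{\mathfrak u},\hat{\mathfrak u}\rangle_{L^2(-2\sigma,\infty)}=0$ via boundary terms gives $e^{\Im\alpha\pi}\sqrt{m^2-a^2}\,|b|^2+e^{\Im\alpha\pi}m(-2\sigma)^{2-2s}|\hat{\mathfrak u}(-2\sigma)|^2=0$, hence $b=0$.

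Cutting at the regular singular point $\xi=-2\sigma$ is essential. Over all of $\R$ the contributions from $\xi\to\pm\infty$ have opposite signs (Remark~\ref{remark:freq-pm-infty}), and one only recovers the non-superradiant case. Your closing paragraph (Fourier transform, a distinguished frequency) points in this direction, but with the wrong pairing.

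Note also that what gets killed is the leading coefficient at the \emph{horizon}, not at infinity. The proof then concludes by radial point regularity: $b=0$ gives $\mathfrak u\in C^\infty$, so by its support $\mathfrak u$ vanishes to infinite order at $r_+$, and unique continuation at the regular singular point $r_+$ finishes. Finally, the threshold case $(r_+^2+a^2)\sigma+ak=0$ that you defer needs no separate treatment: the $\chi_+^{\alpha}$ construction and the positive weights $e^{\Im\alpha\pi}$ cover it.
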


{\em Our arguments in fact go through without the separation of variables,}
using only the mode solution property, i.e.\ involving only the
Killing vector fields. Under this assumption one can work directly with the operator \eqref{eq:separated-op-def},
keeping the term
$$
\frac1{\mu(r)} \left( i \left( (r^2 + a^2) \s + a k \right) + (r - m) s \right)^2
  $$
  unchanged, but
replacing $\lambda$ in $\P$ by a ($s,\s$-dependent) elliptic,
self-adjoint for $\s$ real, second
order differential operator $\Lambda$ on
the sphere with positive principal symbol given by that of the
spherical Laplacian, i.e.\ the dual metric function of the round
metric on the sphere (is in particular independent of $a$),
namely\footnote{It is not immediately clear from this expression that
  $\Lambda$ is well-defined since this expression holds in standard
  spherical coordinates in the Kinnersley tetrad trivialization, thus
  away from the north and south poles. However, for integer $s$,
  $$
  \Lambda=\mathrm L_s
		+ \d_r \mu(r) \d_r - \frac1{\mu(r)} \left( (r^2 + a^2)\d_t + a \d_\phi - (r - m) s \right)^2 -4 s r \d_t,
                $$
              with $\d_r,\d_t$ a priori well-defined on sections of the complex
              line bundle given by the pull back of $\B(s)$ on the
              2-sphere. Moreover, in view of the transition maps
              between the trivializations of the bundle in
              stereographic charts valid near the north, resp.\ south,
              poles, $\d_\phi$ becomes $\d_\phi\pm s$ in
              those trivializations, thus extends smoothly to the
              poles. From this perspective, $\d_\phi$ is the
              infinitesimal generator of a circle action on the line
              bundle $\B(s)$ (regarded as an operator on the full
              spacetime). Note that, for $s\neq 0$, $\d_\phi$ is {\em not} given
              by covariant differentiation with respect to any connection since the vector field $\d_\phi$
              vanishes on the sphere at the poles. Hence the right
              hand side is well-defined as a differential operator
              with smooth coefficients, a priori on sections over the
              spacetime, but the expression in the Kinnersley tetrad
              shows that in fact it restricts to an operator acting on
              sections of $\B(s)$. We thank Pascal
              Millet for explaining this bundle picture to us, and we
              refer to \cite{M2024} for further information.}
\begin{equation}\label{eq:Lambda-def}
\Lambda=- \frac1{\sin(\theta)}\d_\theta \left( \sin(\theta) \d_\theta
\right) - \frac1{\sin^2(\theta)}\left(-i\s a \sin^2(\theta) + \d_\phi + i s \cos(\theta) \right)^2 +4 s a\s \cos(\theta).
\end{equation}
Note that at the principal symbol level then the operator $\P$ agrees
with that of $r^2$ times the time-Fourier transformed Schwarzschild d'Alembertian ($a=0$), except for the change in the
first term $-\d_r\mu\d_r$ arising from the different definition of $\mu$.
{\em Throughout the
paper we comment on the mostly very minor changes this causes; the
main difference is that the Fredholm and regularity theory discussed
in Section~\ref{sec:Fredholm} uses the microlocal setup of
\cites{V2013,HiVa2015} more fully.}

As we shall see below, dual or adjoint mode solutions play a major
role in the paper. Since these can be constructed and analyzed on
other spacetimes, such as Kerr-de Sitter, in a completely similar
manner, we comment on this extension in remarks in the main body of the paper,
although the application to mode stability will be missing since our
argument, outlined below, breaks down in that case. We will also
phrase our construction in terms of the spacetime geometry, which is
of interest in both the Kerr and the Kerr-de Sitter settings.

We end this brief introduction by the outline of the proof (in the
separated case, taking $s=0$ for simplicity) which is in
fact rather simple. Later, in Section~\ref{sec:sketch} we give a
higher level ``philosophical'' sketch that puts the argument in a
larger context.

First, in Section~\ref{sec:Fredholm} we recall the Fredholm theory of \cites{V2013,HiVa2015} for
the conjugated operator 	$\A= e^{H(r)}\P e^{-H(r)}$; this
conjugation amounts to imposing smoothness at, and thus, from an
extended perspective that we employ, across, the event horizon for the
solutions by the above regularity requirement (note that the relevant element of $\Ker\A$ are $e^{H(r)}$
times elements of $\Ker\P$), and a particular oscillatory behavior at
null infinity, i.e.\ as $r\to\infty$. The Fredholm theory implies existence of dual solutions
for $\A^*$ on the dual function spaces, which then in particular are
distributions supported in $r\geq r_+$. Technically it is convenient
for us (in part to connect to the original Whiting picture) to work
with the bilinear adjoint, $\A^\dagger$ rather than with $\A^*$; one
can do this by complex conjugating the elements of the kernel. In
fact, $\A^\dagger=e^{-H(r)}\P e^{H(r)}$ in $r>r_+$ and based on this in
Section~\ref{sec:alternative-construction} we give an alternative version of the
construction of these supported in $r\geq r_+$ adjoint solutions
which could be of interest also as the construction and singularity analysis is more ``hands on''
in that case, using basic distribution theory. In either
manner, we obtain a precise description of the singular structure of
elements of the kernel of $\A^\dagger$; in fact the two perspectives
combined give an even more precise result (though this is technically
not needed for us). As an aside, in Section~\ref{sec:extended-Kerr}
we describe the precise behavior of the mode solutions in the
spacetime extended across the event horizons, including the bifurcate
sphere, showing that they are solutions in a full neighborhood of the
closed region of outer communications, supported in the future of the
null-geodesics generating the past event horizon: one sees the direct
mode behavior near the future event horizon, and the adjoint mode near
the past event horizon.

Next, we would like to consider the imaginary part of the pairing of
an adjoint mode $u$ with
$\A^\dagger u$ in a certain localized manner in phase space. (Of course, the pairing a priori vanishes as $\A^\dagger u=0$;
the point is to write this in a different way.) Note that the unlocalized version gives the standard position
space ``boundary pairing'' which proves the non-existence of such
modes except in case of superradiance, see e.g.\ \cite{S2015}, but we
do not need this, although for completeness we recall this in Appendix~\ref{appendix:position-pairing}.

In order to execute this
argument, we Fourier transform $u$ (to $\hat u$), which is allowed as $u$ is a
tempered distribution on $\R$ supported in $r\geq r_+$ and compute its
precise singular structure from knowing that of $u$; this is done in
Section~\ref{sec:Fourier}. In Section~\ref{sec:Fourier} we also conjugate
$\A^\dagger$ by the Fourier transform, denoting the dual, frequency,
variable by $\xi$, and in Section~\ref{sec:frequency-pairing} we observe that this has a
regular singular point at $\xi=-2\s$ where $\hat u$ is actually
smooth. With these observations, for $\s<0$, the pairing of the Fourier transforms
on $(-2\s,\infty)$ (for $\s>0$ we work on $(-\infty,-2\s)$) is easily
computed in Section~\ref{sec:frequency-pairing} as a sum of two absolute value
squares of complex numbers, one from $\xi=-2\s$ and one from the
asymptotic behavior at $\xi=+\infty$. Via our Fourier transform
computation the latter corresponds to the most singular term of $u$ at $r_+$ and
whose vanishing implies the vanishing of the most singular term of $u$
at the event horizon.

Finally, a standard unique continuation result for ODE completes the
proof in Section~\ref{sec:frequency-pairing}.

We finish the paper by explaining the translation of the Whiting
transform to our adjoint solution language in Section~\ref{sec:Whiting}.

\subsubsection*{Acknowledgements}

The authors are very grateful to Peter Hintz for comments on an
earlier version of the manuscript and for suggesting additional
references, and to Pascal Millet for explaining to us the
structure of the line bundle on which the non-separated Teukolsky
operator acts.
O.P. gratefully acknowledges support from the Swedish Research Council under grant number 2021-04269 and from Knut och Alice Wallenbergs Stiftelse under grant number KAW 2021.0239.
A.V.~gratefully acknowledges support from the National Science Foundation under grants number DMS-2247004 and DMS-2553664 and from a Simons Fellowship of the Simons Foundation.

\section{Fredholm theory}\label{sec:Fredholm}

\noindent
For all $r>r_+$, we define the operator\footnote{It is better to
  consider $\A$ as $\mathrm L_s$, which acts on distributional
  sections of the spacetime extended across the horizons, acting on separated
  modes with a factor that extends smoothly
  across the future event horizon, namely $e^{-i\s t_*-ik\phi_*}$,
  factored out; this is how it actually arises in
  \cites{V2013,HiVa2015}. See the discussion in Section~\ref{sec:KdS-Fredholm}.\label{footnote:extended-conjugation}}
\begin{equation}\label{eq:A-def}
	\A
		:= e^{H(r)}\P e^{-H(r)}.
              \end{equation}
              Keeping with the eventual desire to consider the full
              spacetime picture, this means that $\A$ is the operator
              $e^{H(r)}\mathrm L_s e^{-H(r)}$ acting on the separated
              modes, with $e^{-i\s t-ik\phi}$ factored out, but we
              only use this perspective in
              Section~\ref{sec:extended-Kerr}, and it is not needed
              for our discussion of mode stability.

\begin{lemma} \label{le: conjugation}
The operator $\A$ is given by
\begin{align*}
	\A
		= & \ - \d_r \mu(r) \d_r +\d_r \left( i \left( \left( r^2 + a^2 \right) \s + ak \right)  + (r - m)s \right) \\
		& \ +\left( i \left( \left( r^2 + a^2 \right) \s + ak \right) + (r - m)s \right) \d_r - 4 s i r \s + \lambda,
\end{align*}
which extends real analytically across $r = r_+$ to $\R$.
\end{lemma}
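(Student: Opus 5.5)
The plan is a direct computation: conjugate each term of $\P$ by $e^{H}$ and check that the singular factor $1/\mu$ cancels. Throughout write
\[
F(r) := i\left(\left(r^2+a^2\right)\s + ak\right) + (r-m)s ,
\]
so that \eqref{eq: H} says $\mu H' = F$, a polynomial in $r$.

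The key step is the conjugation of the radial derivative. On $r>r_+$,
\[
e^{H}\d_r e^{-H} = \d_r - H'.
\]
Hence
\[
e^{H}\left(-\d_r \mu \d_r\right)e^{-H} = -(\d_r - H')\,\mu\,(\d_r - H').
\]
Expanding, and using $\mu H' = F$, the operator identity $H'\mu\d_r = F\d_r$, and $\d_r\circ(\mu H') = \d_r\circ F$, gives
\[
-(\d_r - H')\mu(\d_r - H') = -\d_r\mu\d_r + \d_r F + F\d_r - H'\mu H'.
\]
The last term is $-F^2/\mu$.

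The zeroth order terms of $\P$ commute with multiplication by $e^{H}$, so they pass through unchanged. In particular the term $F^2/\mu$ in \eqref{eq:separated-op-def} exactly cancels the $-F^2/\mu$ just produced. The terms $-4si r\s+\lambda$ survive as they are. This yields the stated formula for $\A$.

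All coefficients in the resulting formula are polynomials in $r$: namely $\mu$, $F$ and $F'$, together with constants. So the expression defines a differential operator with real analytic (indeed polynomial) coefficients on all of $\R$, and it agrees with $e^H\P e^{-H}$ on $r>r_+$. This is the claimed real analytic extension across $r=r_+$.

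The only point needing care is the cancellation of the $1/\mu$ singularity. It rests solely on the choice \eqref{eq: H} of $H'$, i.e.\ $\mu H' = F$ exactly. The choice of branch or constant of integration in $H$ is irrelevant, since only $H'$ enters the computation.
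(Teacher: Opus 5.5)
Your proof is correct and follows essentially the same route as the paper's. The paper expands $e^{H}\P e^{-H} = \P + H'\mu\d_r + \d_r\mu H' - \mu (H')^2$ via commutators and then substitutes $\mu H' = F$, which is the same computation you carry out through $e^{H}\d_r e^{-H} = \d_r - H'$. Your observation that all resulting coefficients are polynomials in $r$ spells out the real analytic extension across $r=r_+$, which the paper leaves implicit.
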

\begin{proof}
We compute
\begin{align*}
	e^{H} \P e^{-H}
		= & \ \P + e^{H}[\P, e^{-H}] \\
		= & \ \P - e^{H}[\d_r, e^{-H}]\mu \d_r - e^{H} \d_r \mu [\d_r, e^{-H}] \\
		= & \ \P + H' \mu \d_r + e^{H} \d_r \mu H' e^{-H} \\
		= & \ \P + H' \mu \d_r + \d_r \mu H' - \mu (H')^2,
\end{align*}
which provides the desired formula after inserting $H'(r)$ as in \eqref{eq: H}.
\end{proof}

Following \cites{V2013,HiVa2015}, for $\sigma\in\R\setminus\{0\}$, the direct Fredholm theory imposing the no incoming radiation
conditions above is obtained by considering $\A$ as an operator on
function spaces imposing sufficient smoothness at $r=r_+$ and
oscillatory behavior $e^{2i\sigma r}$ times conormal at
infinity\footnote{The latter is problematic for $\Im\sigma>0$, but
  this is due to the conjugation by $e^{-H}$ with this $H$ is only
  suitable for real $\s$; otherwise one should follow the discussion
  in Section~\ref{sec:KdS-Fredholm}.}: the
former is a direct consequence of the hypotheses of Theorem~\ref{thm:
  radial ODE}, and the latter is as well taking into account that
$H(r)-i\sigma r$ is a logarithmically bounded symbol. In fact, the
Fredholm theory is extremely flexible; the key point is to disallow
the other potential behavior at infinity, which is symbolicity
(oscillation at zero frequency), and sufficiently singular conormal
behavior at $r_+$. One also needs to `cap off' the problem, for
instance by adding a `final Cauchy hypersurface' at $r=r_+-\delta$,
$\delta>0$.

As we shall see imminently, this operator is Fredholm of index 0:
invertibility for large real $\sigma$ follows from semiclassical
theory; the operator is non-trapping in this sense. The adjoint $\A^*$
is acting on (essentially) dual spaces\footnote{Relative to the
  $L^2$-pairing, thus the dual of (microlocal) high regularity is
  (microlocal) low regularity,
  and the dual of an extendible distributional subspace is a supported
  distributional subspace.}; this effectively means that
support in $r\geq r_+$ is imposed, and this time symbolicity at
$r=\infty$ is allowed, while the $e^{2i\sigma r}$ oscillation is
disallowed. Due to index 0, $\A$ is invertible if and only if it has
trivial kernel, which is if and only if $\A^*$ has trivial kernel.

We in fact consider $\A^\dagger$, the bilinear (as opposed to
sesquilinear) pairing adjoint. The kernels of $\A^*$ and  $\A^\dagger$
are conjugate-linear isomorphic via complex conjugation. Note that
from the above computation,
\begin{align*}
	\A^\dagger
		= & \ - \d_r \mu(r) \d_r -\d_r \left( i \left( \left( r^2 + a^2 \right) \s + ak \right)  + (r - m)s \right) \\
		& \ -\left( i \left( \left( r^2 + a^2 \right) \s + ak \right) + (r - m)s \right) \d_r - 4 s i r \s + \lambda.
\end{align*}
This can also be seen from the computation in $r>r_+$:
\[
	\A^\dagger= (e^{H(r)}\P e^{-H(r)})^\dagger=e^{-H(r)}\P^\dagger
        e^{H(r)}=e^{-H(r)}\P
        e^{H(r)},
      \]
      using that $\P^\dagger=\P$ (which is one reason the bilinear
      adjoint can be convenient to use)
and then we obtain the analytic extension. Corresponding to the domain
of the adjoint, again support in $r\geq r_+$ is imposed, and symbolicity at
$r=\infty$ is allowed, while the $e^{-2i\sigma r}$ oscillation is
disallowed. Notice that\footnote{In Section~\ref {sec:extended-Kerr} we give an
  interpretation of this that is analogous to that of
  Footnote~\ref{footnote:extended-conjugation}, namely via extension
  across the {\em past} event horizon.} $A^\dagger$ is thus $e^{-H(r)}\mathrm L_s
        e^{H(r)}$ acting on the separated modes with $e^{-i\s
          t-ik\phi}$ factored out.

To see the Fredholm theory, first note that the principal symbol of $\A$ as an operator in $\Psisc^{2,2}$ is
$$
\mu(r)\xi_r^2-2r^2\sigma\xi_r=\xi_r(\mu(r)\xi_r-2r^2\sigma),
$$
and moreover at $r=\infty$ this is equivalent to
$$
r^2\xi_r(\xi_r-2\sigma),
$$
so the characteristic set there consists of the two points $\xi_r=0$
and $\xi_r=2\sigma$, while at fiber infinity this is equivalent to
$$
\mu(r)\xi_r^2,
$$
so as $\xi_r\to\infty$ at fiber infinity, the characteristic set
corresponds exactly to the horizons, where $\mu$ vanishes; for us
$r=r_+$ is the relevant root of $\mu$ as we shall be working in $r\geq
r_+-\delta$, $\delta>0$ sufficiently small so that the only root of
$\mu$ in $r\geq r_+-\delta$ is $r_+$. Thus, in the compactified
approach, the characteristic set consists of four points: $r=r_+$ at
$\xi_r=\pm\infty$, and $\xi_r=0,2\sigma$ at $r=+\infty$. At these
points the principal symbol vanishes non-degenerately as a function,
after rescaling, on the boundary of the compactified cotangent bundle
as both $\mu$ and $\xi_r(\xi_r-2\sigma)$ have non-degenerate roots for
$\sigma\neq 0$. The Hamilton vector field is necessarily radial then
(since it annihilates the rescaled principal symbol, and we have a
2-dimensional phase space) and is non-vanishing as a b-vector field,
i.e.\ the characteristic set consists of sources and sinks.

It is instructive to compute the precise nature of the sources and
sinks, as well as the threshold quantities, which arise
from the principal symbol of $\frac{\A-\A^*}{2i}$, adjusted by the
Hamilton vector field applied to the defining function of the relevant
boundary (fiber infinity or spatial infinity) the radial point is at. But for real $\sigma$
\begin{equation}
  \frac{\A-\A^*}{2i}=\frac{1}{i}(\d_r (r-m)s+s(r-m)\d_r)-4sr\sigma,
\end{equation}
so its principal symbol, as an operator in $\Psisc^{1,1}$, at $r=\infty$ is
$$
2sr(\xi_r-2\sigma),
$$
which is $-4sr\sigma$ at the $\xi_r=0$ component and $0$ at the
$\xi_r=2\sigma$ component of the characteristic set,
and at fiber infinity
$$
2s(r-m)\xi_r,
$$
which is $2s(r_+-m)\xi_r$ at the characteristic set.
Moreover $r^{-1}$, resp.\ $|\xi_r|^{-1}$ define spatial, resp.\ fiber
infinity, and
$$
H_ar^{-1}=-2(\xi_r-\sigma)\ \text{at}\ r=\infty,\ \text{resp.}\ H_a|\xi_r|^{-1}=\mu'(r)\sign(\xi_r).
$$
This gives that for $\s>0$
\begin{enumerate}
\item
  the points $\xi_r=0$ at $r=\infty$ and
  $\xi_r>0$, $r=r_+$ at fiber infinity are sources, and
\item
  $\xi_r=2\sigma$
at $r=\infty$ and $\xi_r<0$, $r=r_+$ are sinks for the Hamilton
flow;
\end{enumerate}
for $\s<0$ instead
\begin{enumerate}
\item
  the points $\xi_r=2\sigma$ at $r=\infty$ and
  $\xi_r>0$, $r=r_+$ at fiber infinity are sources, and
\item
  $\xi_r=0$
at $r=\infty$ and $\xi_r<0$, $r=r_+$ are sinks for the Hamilton
flow.
\end{enumerate}

The relevant threshold quantity is then, with the first term
arising from the order of the operator and the second from the
rescaling of the principal symbol of the skew-adjoint part:
\begin{enumerate}
\item
  at $r=\infty$, $\xi_r=0$
$$
\frac{2-1}{2}+\frac{1}{2\sigma}(-4s\sigma)=\frac{1}{2}-2s;
$$
\item
at $r=\infty$, $\xi_r=2\sigma$
$$
\frac{2-1}{2}-\frac{1}{2\sigma}\cdot 0=\frac{1}{2};
$$
\item
at $r=r_+$, both at $\xi_r>0$ and at $\xi_r<0$,
$$
\frac{2-1}{2}+\frac{1}{\mu'(r_+)}(2s(r_+-m))=\frac{1}{2}+2s\frac{r_+-m}{r_+-r_-}=\frac{1}{2}+s.
$$
\end{enumerate}
Given the asymptotics we impose, the orders $\kappa,\ell$ of the Sobolev
space $\Hsc^{\kappa,\ell}$ we are working with need to satisfy
\begin{enumerate}
\item
  $\kappa>\frac{1}{2}+s$ at $r=r_+$
\item
  $\ell>\frac{1}{2}-2s$ at
  $r=\infty$, $\xi_r=0$,
\item
  $\ell<\frac{1}{2}$ at $r=\infty$,
  $\xi_r=2\sigma$.
\end{enumerate}
The operator then acts
$$
\A:\{u\in\Hscbar^{\kappa,\ell}:\ \A u\in\Hscbar^{\kappa-1,\ell-1}\}\to\Hscbar^{\kappa-1,\ell-1},
$$
while on analogous spaces\footnote{More precisely the actual adjoint
  of $\A$ maps $\Hscdot^{-\kappa+1,-\ell+1}$ to
  $\Hscdot^{-\kappa,-\ell}+\A^*\Hscdot^{-\kappa+1,-\ell+1}$, but the
  estimates needed to establish a Fredholm theory are identical.}
$$
\A^*:\{v\in\Hscdot^{-\kappa+1,-\ell+1}:\ \A^*v\in\Hscdot^{-\kappa,-\ell}\}\to\Hscdot^{-\kappa,-\ell},
$$
and for $\A^*$, with $\kappa^*=-\kappa+1$, $\ell^*=-\ell+1$ these threshold inequalities thus amount to
\begin{enumerate}
\item
  $\kappa^*<\frac{1}{2}-s$ at $r=r_+$,
\item
  $\ell^*<\frac{1}{2}+2s$ at
  $r=\infty$, $\xi_r=0$,
\item
  while $\ell^*>\frac{1}{2}$ at $r=\infty$,
  $\xi_r=2\sigma$.
\end{enumerate}
Complex conjugation simply amounts to pull back by
the map $\xi_r\mapsto-\xi_r$, so for $\A^\dagger$ the orders
$\kappa^\dagger,\ell^\dagger$ are $\kappa^*,\ell^*$ pulled back by
this map and thus
\begin{enumerate}
\item
  $\kappa^\dagger<\frac{1}{2}-s$ at $r=r_+$,
\item
  $\ell^\dagger<\frac{1}{2}+2s$ at
  $r=\infty$, $\xi_r=0$,
\item
  while $\ell^\dagger>\frac{1}{2}$ at $r=\infty$,
  $\xi_r=-2\sigma$.
  \end{enumerate}

Since for us the large parameter behavior also matters in $\sigma$,
for establishing invertibility of $\A$ in this case, we note that with
the semiclassical rescaling $h=|\sigma|^{-1}$, and $\xi_{r,\semi}$ the
semiclassical symbol of $hD_r$, the semiclassical
principal symbol of $\A$ is
$$
h^{-2}(\mu(r)\xi_{r,\semi}^2-2(r^2+a^2)\xi_{r,\semi})=h^{-2}\xi_{r,\semi}(\mu(r)\xi_{r,\semi}-2(r^2+a^2)).
$$
Correspondingly, as now at $h=0$ we can have characteristic set in the
interior of the cotangent bundle, one of the components of the characteristic set is
the zero section, $\xi_{r,\semi}=0$. For $r>r_+$ the other component
is $\xi_{r,\semi}=2\mu(r)^{-1}(r^2+a^2)$, which tends to
$\xi_{r,\semi}=+\infty$ as $r\to r_++$, while in $r<r_+$ it is
$\xi_{r,\semi}=2\mu(r)^{-1}(r^2+a^2)$, which tends to $\xi_{r,\semi}=-\infty$
as $r\to r_+-$; these two points lie at fiber infinity and are the
two already discussed points of the characteristic set there. In
agreement with the discussion of the points in the characteristic set
at fiber infinity and $r=\infty$, in the $\xi_r=0$ component we have a
source at $r=\infty$, with the bicharacteristic tending to our final
Cauchy hypersurface at $r=r_+-\delta$, while in the component that
intersects $r>r_+$, we have a source at fiber infinity at $r=r_+$, $\xi_{r,\semi}>0$ and
a sink at $r=\infty$, $\xi_{r,\semi}=2$, while in the component that
intersects $r<r_+$ we have a sink at fiber infinity at $r=r_+$,
$\xi_{r,\semi}<0$, and the bicharacteristic tends here from the final
Cauchy hypersurface at $r=r_+-\delta$. This means that we have
non-trapping semiclassical dynamics and thus large $\sigma$ estimates,
giving the invertibility of $\A$, and thus its index $0$ property, then.

Now, for
$$
\A^\dagger: \{v\in\Hscdot^{\kappa^\dagger,\ell^\dagger}:\ \A^\dagger v\in\Hscdot^{\kappa^\dagger-1,\ell^\dagger-1}\}\to\Hscdot^{\kappa^\dagger-1,\ell^\dagger-1},
$$
subject to the constraints on the orders, the kernel is actually
independent of the orders, i.e.\ lies in the intersection of all these
spaces. In fact, by the results of Haber and Vasy \cite{HaVa2015}, elements of the kernel
are conormal to $r=r_+$ as well as symbolic at $r=\infty$. Here in
fact these regularity statements are much simpler than in
\cite{HaVa2015} since an appropriate elliptic multiple of $\A^\dagger$
spans the microlocal $\Psisc^{0,0}$-submodule of $\Psisc^{1,1}$ consisting of
operators characteristic at the conormal bundle of $r=r_+$, resp.\ at
the zero section at $r=\infty$, so the conormal/symbolic regularity
immediately follows from the regularity (in this case vanishing) of
$\A^\dagger u$.

{\em In case we do not separate variables fully}, rather use the
operator with \eqref{eq:Lambda-def} in place of $\lambda$, the
principal symbol of $\A$ as an operator in $\Psisc^{2,2}$ (on
$\R\times\Sph^2$) becomes
$$
\mu(r)\xi_r^2-2r^2\sigma\xi_r+r^2\nu^2=\xi_r(\mu(r)\xi_r-2r^2\sigma)+r^2\nu^2,
$$
where $\nu$ is the spherical scattering covector variable ($r^{-1}$
times standard spherical covector), and moreover
at
 $r=\infty$ this is equivalent to
$$
r^2\xi_r(\xi_r-2\sigma) +r^2\nu^2,
$$
which is exactly the same as for $r^2$ times the conjugated Schwarzschild d'Alembertian, while at fiber infinity this is equivalent to
$$
\mu(r)\xi_r^2 +r^2\nu^2,
$$
which only differs from the Schwarzschildean version by a different
definition of $\mu$. Now the characteristic set is no longer discrete,
but is still disjoint from $r_+<r<\infty$, and
at infinity the Hamilton flow is exactly the same as in the
Schwarzschild case, while for $r\leq r_+$ has the same qualitative
features as for Schwarzschild. Thus,
for $\s>0$
\begin{enumerate}
\item
  the submanifolds $\xi_r=0,\nu=0$ at $r=\infty$ and
  $\xi_r>0,\nu=0$, $r=r_+$ at fiber infinity are sources, and
\item
  $\xi_r=2\sigma,\nu=0$
at $r=\infty$ and $\xi_r<0,\nu=0$, $r=r_+$ are sinks for the Hamilton
flow;
\end{enumerate}
for $\s<0$ instead
\begin{enumerate}
\item
  the points $\xi_r=2\sigma,\nu=0$ at $r=\infty$ and
  $\xi_r>0,\nu=0$, $r=r_+$ at fiber infinity are sources, and
\item
  $\xi_r=0,\nu=0$
at $r=\infty$ and $\xi_r<0,\nu=0$, $r=r_+$ are sinks for the Hamilton
flow.
\end{enumerate}
Since all the sources and sinks are at $\nu=0$, where the principal
symbol of $\Lambda$ vanishes quadratically, and since $\Lambda$ is
symmetric, $\Lambda$ has no impact at
all on the threshold computations. Thus, all of the above threshold
computations are unchanged, and only $\nu=0$ should be added to the
actual critical set definition on each line for each of the operators
$\A,\A^*,\A^\dagger$. A minor difference is that while the bundle
$\B(s)$ has a Hermitian inner product\footnote{This corresponds to the
  transition maps in the standard trivializations, as in \cite{M2024},
  being multiplication by a factor of absolute value $1$; cf.\ also
  the discussion for $A^\dagger$ below.}, thus $A^*$ is well-defined as
acting on this bundle, the complex conjugation needed for defining
$\A^\dagger$ from $\A^*$, defined in the Kinnersley trivialization at
first, does not extend as a map $\B(s)\to\B(s)$, but it {\em does
  extend} to a map $\B(s)\to \B(-s)$, i.e.\ as a map from $\B(s)$ to its
dual bundle $\B(s)$, since under the transition map $z\mapsto
e^{\pm is\phi}z$ complex conjugation becomes $\overline{z}\mapsto
e^{\mp is\phi}\overline{z}$. Since $\A^\dagger$ is the conjugate of
$\A^*$ by complex conjugation, $\A^\dagger$ is well defined as a
map acting on distributional sections of $\B(-s)$.

In addition, the results of Haber and Vasy
\cite{HaVa2015} still apply (this time this is a non-trivial
application of \cite{HaVa2015}), so elements of the kernel of $\A^\dagger$
are conormal to $r=r_+$ as well as symbolic at $r=\infty$. Finally,
for concluding that the index is 0, it suffices to consider $a=0,s=0$
(we allow $s\in\R$!)
since the index is constant under deformations, but then this {\em is}
$r^2$ times
the Fourier transformed (conjugated) Schwarzschild d'Alembertian for
which this has been shown in \cites{V2021a,V2021b}; in particular it
follows from the trivial kernel and cokernel of the $\s=0$ problem by
\cite{V2021b}; see also \cite{HHV21}*{Section 4}.
 (The papers \cites{V2021a,V2021b} use Lagrangian rather
than variable order spaces, but elements of the kernel of the operator
and its adjoint for either setup automatically lie in the other space
by the regularity theory.)

\subsection{Spacetime extension across the horizons in Kerr and Kerr-de Sitter spaces}\label{sec:KdS-Fredholm}
In this section, which is not needed for the mode stability result, we
discuss the Kerr-de Sitter version of the theory for $s=0$; we also use this
opportunity to connect the conjugation in \eqref{eq:A-def} to the coordinate change one
introduces usually both in the Kerr and in the Kerr-de Sitter
setting for extension across the future event horizon.

This coordinate change usually takes the form
\begin{equation}\label{eq:t_*-phi_*}
t_*=t-\Phi(r),\ \phi_*=\phi-\Psi(r),
\end{equation}
and $\Phi,\Psi$ are specified via their derivatives:
\[
\Phi'(r)=b\frac{r^2+a^2}{\mu(r)}f(r),\ \Psi'(r)=b\frac{a}{\mu(r)}f(r),
\]
where $f$ is smooth on a neighborhood of $[r_e,r_c]$, $f(r_e)=-1$,
$f(r_c)=1$, with $r_e$ and $r_c$ the loci of the event, resp.\
cosmological, horizons, so in the Kerr case $r_e$ corresponds to
$r_+$, and $r_c$ to $+\infty$; we refer to \cite{PV2025} for a
description of the Kerr-de Sitter geometry. There is a similar description for Kerr; then $b=1$, and
from a {\em purely analytic} (as opposed to geometric) perspective
one can take $f=-1$ constant,
although $f(r_+)=-1$, $\lim_{r\to\infty} f(r)=1$ corresponds to fully
regular, in the sense of smoothness at the event horizon and
conormality at $r=\infty$; the $f\equiv -1$ choice moves the
desired behavior to $\xi_r=2\s$ as discussed above; we return to this momentarily.

Mode solutions $e^{-i\s t-ik\phi}u_0$, with the ``profile'' $u_0$ annihilated by $\d_t,\d_\phi$, take the form
$$
e^{-i\s t-ik\phi}u_0=e^{-i\s t_*-ik\phi_*} (e^{-i\s\Phi(r)-ik\Psi(r)} u_0),
$$
i.e.\ the modes with respect to the new coordinates are
$e^{-i\s\Phi(r)-ik\Psi(r)}$ times the modes of the previous
form. Correspondingly, acting on the new ``profile'' $e^{-i\s\Phi(r)-ik\Psi(r)} u_0$, $\P$ is replaced by its conjugated version
\begin{equation}\label{eq:KdS-P-conj}
e^{-i\s\Phi(r)-ik\Psi(r)}\P e^{i\s\Phi(r)+ik\Psi(r)}.
\end{equation}
Comparing with the start of the section, this means that (for $s=0$)
\begin{equation}\label{eq:KdS-H}
H=-i\s\Phi(r)-ik\Psi(r),\ H'=-i\s\Phi'-ik\Psi',
\end{equation}
which {\em is}
the choice of $H$ for Kerr in the introduction if $f$ is identically $-1$,
i.e.\ is the ``right'' choice for the event horizon, but the ``wrong''
choice at null infinity (from a compactification
perspective\footnote{Or indeed for considering Fredholm theory for $\Im\s>0$.}, say);
this analogy explains the non-symbolic, rather oscillatory, behavior
of mode solutions for Kerr at null-infinity (as $r\to\infty$). In
terms of the action of $\mathrm L_0$ on separated modes, it is just
$\P$ when $e^{-i\s t-ik\phi}$ is factored out from the mode, i.e.\
\eqref{eq:KdS-P-conj} is just $\mathrm L_0$ acting on a separated
mode, but with $e^{-i\s t-ik\phi}e^{i\s\Phi(r)+ik\Psi(r)}=e^{-i\s
  t_*-ik\phi_*}$ factored out. Since $L_0$ is actually a smooth
differential operator across the horizons and $t_*,\phi_*$ are smooth
across the future event horizon, this means that we $\A$ is simply
$L_0$ acting on modes with $e^{-i\s t_*-ik\phi_*}$ factored out.

For Kerr-de Sitter spacetime, unlike Kerr, there is a significant analytic cost
for making the ``wrong'' choice (beyond considering $\Im\s>0$): the analogue of the scattering
algebra there is the much harder to use (for non-elliptic Fredholm
theory, when the modes are not fully separated) 0-algebra, so it is
best to work across {\em both} horizons.
Thus, we first define the conjugation of $P$ as in
\eqref{eq:KdS-P-conj}, i.e.\ we {\em define} $H$ by \eqref{eq:KdS-H}, with
$f(r_e)=-1$, $f(r_c)=1$. Then
the Fredholm theory we discussed above goes through when we place
final Cauchy hypersurfaces at $r=r_e-\delta_e$ and $r=r_c+\delta_c$,
$\delta_e,\delta_c>0$, $r_e-\delta_e$ greater than $r$ at the Cauchy
horizon. A change is that $r=r_e$ has the source for $\xi_r>0$ but
$r=r_c$ for $\xi_r<0$. For the adjoint operator the final Cauchy
hypersurface become initial Cauchy hypersurfaces, and thus one is
working with spaces of supported distributions, which in particular
implies that the dual mode solutions are supported in $[r_e,r_c]$, and
they are conormal to $r=r_e$ and $r=r_c$.

\section{A sketch of the argument}\label{sec:sketch}
In this section we give a high level and rough sketch of the proof of
our main result by placing it in a larger context; we remark upfront that the sketch is unaffected
except in notation by using the non-separated operator. {\em We
  emphasize up front that this section is not needed for the argument
  of the paper, thus the reader may freely choose to ignore it, but we
  hope that it will be useful for at least some of the readership
  since it shows how the argument connects to microlocal analysis.} The
key point is a positive commutator estimate, which is not so easy to
justify directly, hence needing to go through the explicit Fourier
transform route in the rest of the argument. Thus, here, we work with the operator
$$
\mathcal P
		:= \A^\dagger
                $$
                given by
\begin{align*}
	\mathcal P
		= & \ - \d_r \mu(r) \d_r - \d_r \left( i \left( \left( r^2 + a^2 \right) \s + ak \right)  + (r - m)s \right) \\
		& \ - \left( i \left( \left( r^2 + a^2 \right) \s + ak \right) + (r - m)s \right) \d_r - 4 s i r \s + \lambda.
\end{align*}
In fact, for now we assume $s=0$; we comment on the general case
later.

Paralleling the discussion in Section~\ref{sec:Fredholm}, the
principal symbol of $\mathcal P$
in
 $\Psisc^{2,2}$ is
$$
\mu(r)\xi_r^2+2r^2\sigma\xi_r=\xi_r(\mu(r)\xi_r+2r^2\sigma),
$$
and moreover at $r=\infty$ this is equivalent to
$$
r^2\xi_r(\xi_r+2\sigma),
$$
so the characteristic set there consists of the two points $\xi_r=0$
and $\xi_r=-2\sigma$, while at fiber infinity, as $\xi_r\to\infty$, this is equivalent to
$$
\mu(r)\xi_r^2,
$$
so at fiber infinity the characteristic set is
exactly at the horizons. By working with elements $u$ of the kernel of $\mathcal
P=\A^\dagger$, the distributions we are interested in are supported in
$r\geq r_+$, and the conormal regularity at $r=r_+$ as well as the
symbolic behavior at $r=\infty$ means that the point in the
characteristic set where our distributions are non-trivial are
$r=r_+$, $\xi_r=\pm\infty$, resp.\ $r=\infty, \xi_r=0$.

Now, our positive commutator argument\footnote{Of course, negative
  commutator is just as good for our purposes; definiteness is what is important.} takes the following
form. First, given
$u\in\Ker\mathcal P$ with the just described behavior, one chooses
a family of operators $B=B_R$, which are order
$-\infty,-\infty$ for finite $R$ at every point in the wave front set
of $u$, so all pairings and computations automatically make sense, and
uniformly bounded as $R\to\infty$ with a well behaved limit (so $R$ is
a regularization parameter). Next, using
that $P$ is symmetric (this is the role of $s=0$ for now) one computes
$$
\ldr{i[\mathcal P,B_R]u,u}=i\ldr{B_Ru,\mathcal Pu}-i\ldr{\mathcal Pu,B_Ru}=0,
$$
and arranges on the other hand that $i[\mathcal P,B_R]$ is
non-negative and indeed bounded below by a quantity whose vanishing
implies that in fact $u$ is trivial (microlocally regular, i.e.\ has
no wave front set) at (at least) one of the points in its a
priori wave
front set. This is the crucial victory after which essentially unique
continuation arguments complete the proof of the vanishing of $u$.

A difficulty with this is that the microlocal machinery only allows
one to do the computation modulo compact errors; this reflects that
the principal symbol really ``lives'' at infinity (both base, i.e.\
position, and fiber, i.e.\ momentum, infinity). Another difficulty
is that, due to constraints arising from the Hamilton dynamics that we explain, in fact we need to take $B_R$ to be singular so in fact it is
not a pseudodifferential operator.

In the well-behaved global pairing
arguments in other settings, such as the positivity of propagator
differences paper \cite{V2017}, one uses $B_R$ which actually tends to the identity operator
in a slightly weaker (positive order) space of pseudodifferential operators, uniformly
bounded in order $0,0$ pseudodifferential operators. In this case,
since the identity operator commutes with everything, the only reason
for a non-trivial result is that $u$ is in a too large space, namely
it is order $-1/2-\e$ for all $\e>0$ microlocally at some points. If
$u$ were actually in $H^{-1/2,-1/2}$, $\ldr{i[\mathcal P,B_R]u,u}$
would tend to $0$, and thus only the locations where this membership
fails contribute to the result. These are the sources and sinks of the
Hamilton flow at which we need to allow weaker than
the threshold regularity order. Since $B_R$ is a regularizer, its
principal symbol is
decaying at infinity, so if it is non-negative, at sources the
principal symbol of $i$ times the
commutator is positive, at sinks negative since it is the Hamilton vector field
of $P$ applied to the principal symbol of $B_R$. Thus, as long as the wave
front set of $u$ is only at sources, or only at sinks, one obtains a
positive (or negative) commutator result, but one cannot mix sources
and sinks, except potentially if the contributions at certain sources
and certain corresponding sinks are coupled (arise from the same
quantity) and either cancel or potentially in combination give the correct sign one
needs for the other terms. (Of course one needs to do a more explicit computation for
the non-trivial sources/sinks to obtain an actually definite result to
conclude the microlocal regularity indicated above.) This is a problem for us since similarly to
Section~\ref{sec:Fredholm}, the Hamilton vector field of the principal
symbol of $\mathcal P$ applied to the
defining function of fiber infinity, $|\xi_r|^{-1}$ is
$\mu'(r)\sign(\xi_r)$, which takes opposite signs at
$\xi_r=\pm\infty$ (with $\xi_r=+\infty$ the source, $\xi_r=-\infty$
the sink), and $u$ is singular at both of these at
$r=r_+$. Thus, unless we show and use cancelation from the terms
arising from these two points, we need to work with a commutant that does not tend to
the identity (so that it is supported away from one of these points),
hence the issues raised (non-trivial compact errors, and as we shall
see singular commutants) apply.

Note that at base (position) infinity, where $r^{-1}$ is the defining function,
the Hamilton derivative of this is $-2(\xi_r+\s)$, so at $\xi_r=0$ for
$\s>0$ we have a sink and for $\s<0$ a source. If we only have one
source/sink with non-trivial behavior of $u$ in the wave front set of $B$, of course the regularization
gives a definite sign, but we would need to assure that the
localization itself (the fact that the operator does not tend to the
identity) gives a matching contribution. Thus, for instance, for
$\s<0$, from just the perspective of the contributions due to
regularization, we may localize to a region where either $\xi_r$ is greater than a
constant or less than a negative constant; in the former case we have
one or two sources ($r=r_+$, $\xi_r=+\infty$ and possibly $r=\infty$, $\xi_r=0$)
where $u$ is a priori non-trivial, in the latter case one sink ($r=r_+$, $\xi_r=-\infty$).

Ignoring the just discussed issues, we work with
a full symbol of $\mathcal P$, namely
$$
p=\mu(r)\xi_r^2-2i\left( i \left( \left( r^2 + a^2 \right) \s + ak \right) + (r - m)s \right)\xi_r - 4 s i r \s + \lambda,
$$
and recall that $s=0$, so some of these terms vanish. In fact, it is
not hard to check that $\mathcal P$ {\em is} the Weyl quantization of
$p+1/2$, but due to the singular symbols below we cannot actually use
the Weyl calculus. Now, with
$b=b_R$ the full symbol of $B$, the principal symbol of $i[\mathcal
P,B_R]$ is $H_pb$. We will however pretend that this is the {\em full}
symbol of the commutator in what follows. Obtaining a positive
commutator thus amounts to finding $b$ that is monotone along the
$H_p$ flow.
Now,\footnote{Using the Weyl calculus we would have $H_{p+1/2}=H_p$ still.}
$$
H_p=\left(2\mu(r)\xi_r+2 \left( \left( r^2 + a^2 \right) \s + ak \right)\right)\d_r-(\mu'(r)\xi_r^2+4r\s\xi_r)\d_{\xi_r}.
$$
Further, $u$ has wave front set at both $\xi_r=\pm\infty$ and at
$r=\infty$, but if we localize away from $\xi_r=0$ then the only wave
front set is at $|\xi_r|=\infty$, so we only need to regularize (for
finite $R$) in $\xi_r$ (and not in $r$). This suggests using $b=\psi_R(\xi_r)$; we take
$\psi\geq 0$. We are
then reduced to considering
$$
H_pb=-(\mu'(r)\xi_r^2+4r\s\xi_r)\psi_R'(\xi_r).
$$
Now, as already mentioned, for $\xi_r=+\infty$, $r=r_+$ we have a
source, so if this is included in the support of $b$ the regularizer has a positive Hamilton derivative; we need
to arrange a similar positive derivative elsewhere. But $\psi_R$
should be an increasing function of $\xi_r$ in the localizing region,
where $\xi_r>0$,
i.e.\ we would like $-(\mu'(r)\xi_r^2+4r\s\xi_r)=-(2(r-m)\xi_r^2+4r\s\xi_r)\geq
0$ on the support of $\psi'$. One can cancel the term proportional to
$r$ by choosing $\xi_r=-2\s$ or $\xi_r=0$, which is useful as here we
need to consider all $r\in\R$ and obtain positivity; we do the former to avoid
dealing with the wave front set of $u$ at $\xi_r=0$ (already
discussed) and for reasons
related to $s\neq 0$ discussed below. This then
suggests taking $b$ to be $H(\xi_r+2\sigma)$ times the regularizer
($H$ the step function),
which is exactly what we do formally below. Note that for $\s>0$,
taking $H(-\xi_r-2\s)$ works for exactly the same reasons, just the
sources are replaced by sinks and the signs are reversed. We remark
that if we could actually use the Weyl calculus, due to the {\em
  quadratic} polynomial nature of $p$ in $r$, we would in fact be justified in merely
computing $H_pb$ to obtain the full symbol: in the Weyl expansion for
the commutator the next term would be two orders lower in $r$ and be a
polynomial, thus would need to vanish, giving an exact commutator
result. While we do not try to justify this argument, it could be
thought of as the underlying principle.

Now, for $s\neq 0$, a singular conjugation by $\xi_r^s$ restores the
symmetry of $\mathcal P$, but this (the singularity) is one reason that $\xi_r=0$ should
be avoided; once this is done, an argument as above indeed works.

Of course, this argument is rather sketchy since we pretended that we
can perform exact computations in a singular pseudodifferential
operator setting; the purpose of the next sections is to replace this
sketchy argument (which however explains why one expects the argument
to work) by an explicit and precise one.

\section{The Fourier transform of the radial ODE}\label{sec:Fourier}

\noindent
We now work with the operator
\[
	\mathcal P
		:= \A^\dagger;
              \]
              recall that this is $e^{-H(r)}\P e^{H(r)}$ in $r>r_+$,
and
\begin{align*}
	\mathcal P
		= & \ - \d_r \mu(r) \d_r - \d_r \left( i \left( \left( r^2 + a^2 \right) \s + ak \right)  + (r - m)s \right) \\
		& \ - \left( i \left( \left( r^2 + a^2 \right) \s + ak \right) + (r - m)s \right) \d_r - 4 s i r \s + \lambda.
\end{align*}
For our purposes it is important to compute the precise form of
elements of the kernel at $r=r_+$, while the already established
symbolic regularity at $r=\infty$ suffices.

We start by recalling some basic distributions.
For any $a \in \C$ with $\Re(a) > -1$, the function
\[
	x_+^a
		:=
		\begin{cases}
			x^a, & x > 0, \\
			0, & x \leq 0,
		\end{cases}
\]
is locally integrable and can be viewed as the distribution
\[
	x_+^a[\phi]
		:= \int_0^\infty x^a \phi(x) \md x.
\]
This family of distributions is extended to any $a \in \C \backslash (-\N_+)$, by the formula\footnote{For negative integers, $a = - k$, then
\[
	x_+^{-k}[\phi]
		:= - \frac1{(k-1)!} \int_0^\infty \log(x) \d_x^k \phi(x) \md x + \frac1{(k-1)!} \left( \d_x^{k-1}\phi \right) (0) \sum_{j = 1}^{k-1} \frac1j.
              \]}
\[
	x_+^a[\phi]
		:= - \frac1{a + 1}x_+^{a + 1}[\d_x \phi].
\]

              One also defines for $a\in \C \backslash (-\N_+)$
              $$
              \chi_+^a=x_+^a/\Gamma(a + 1);
              $$
              since $\frac{d}{dx}\chi_+^a=\chi_+^{a-1}$ for $a\in \C
              \backslash (-\N_+)$, it is immediate that
              this then extends analytically to the non-positive integers.
We refer to \cite{H2012}*{Sec.~3.2} for a careful discussion of these distributions.

\begin{remark}
For any value of $a \in \C$, the distribution $x_+^a$ coincides with
$x^a$ for all $x > 0$ and vanishes for $x < 0$. On the other hand, for
$a\in (-\N_+)$, $\chi_+^a$ is supported at $\{0\}$, namely $\chi_+^{-k}=\delta_0^{(k-1)}=(d/dx)^kx^0_+$.
\end{remark}

The tempered distributions $(x\pm i0)^a$ also play a role below for a
description of leading order local singularities; each of these has a
one sided wave front set at $\{0\}$. More precisely, we need to work with $x_{\pm
  i0}^a$ which is defined by
$$
x_{\pm i0}^a=(x\pm i0)^a,\ a\notin\N_0,
$$
but as $(x\pm i0)^a$ is a polynomial (thus smooth), for $a\in\N_0$ one
should use
$$
x_{\pm i0}^a=\log(x\pm i0) (x\pm i0)^a.
$$
The key point is that these are classical conormal distributions to
the conormal bundle of $0$, with one sided wave front set, with
homogeneous principal symbol a non-vanishing multiple of
$|\xi|^{-1-a}$ for either $\xi>0$ or $\xi<0$, and vanishing on the
other half line. In fact, they are inverse Fourier transforms, modulo
$C^\infty$, of these functions, smoothed out near $\xi=0$ -- the
behavior for $\xi$ in a compact set is irrelevant for the local
behavior we need here\footnote{But is of course important for the
  global Fourier transform}.

We know by the considerations above, cf.\ the results of Haber and
Vasy \cite{HaVa2015}, but as mentioned this is much simpler here, that at
$r=r_+$ $u$ in the kernel\footnote{Note that in other parts of the
  paper $u$ is used for elements of the kernel of $\P$.} of $\A^\dagger$ is conormal relative to the Sobolev space
$H^{\kappa^\dagger}$. Denoting $x=r-r_+$, and
$$
\nu(r)=  \left( \left( r^2 + a^2 \right) \s + ak \right)  -i (r -
m)s,\ \nu_+=\nu(r_+),
$$
$$
\A^\dagger\in D_x x\mu'(r_+) D_x+2 \nu_+ D_x+\M^2=x\mu'(r_+)D_x^2+(2\nu_+-i\mu'(r_+))D_x+\M^2,
$$
where $\M$ denotes the module of first order differential operators
with principal symbol vanishing at $x=0$. Thus, Lemma~6.1 of the
radiation field paper of Baskin, Vasy and Wunsch \cite{BVW2015} is applicable with $v$
there being our $x$, and $\alpha$ there being our $\frac{2\nu_+}{\mu'(r_+)}-i$. The
result then states that for suitable $g_\pm\in \C$,
\begin{equation}\label{eq:horizon-decomp}
u=g_+x_{+i0}^{-i \frac{2\nu_+}{\mu'(r_+)}}+g_-x_{-i0}^{-i
  \frac{2\nu_+}{\mu'(r_+)}}+\tilde u,
\end{equation}
where $\tilde u$ is in the conormal space relative to
$H^{\kappa^\dagger+1-\e}$ for all $\e>0$. This result ultimately comes
down to the (here only necessarily leading order, but in fact
complete) classical (i.e.\ one-step polyhomogeneous) conormality of $u$, i.e.\ that it is given by the
inverse Fourier transform of a classical symbol; we discuss the action
of the Fourier transform on such distributions imminently. This also
indicates that the $\chi_+^a$ as opposed to the $x_+^a$ distributions are
helpful to work with, since for $a\in(-\N_+)$ (when there is an a priori
difference), it is the former that
necessarily arise from classical symbols. We also note that
$$
\Re\left(-i \frac{2\nu_+}{\mu'(r_+)}\right)=-\frac{2(r-m)}{\mu'(r_+)}s=-s,
$$
and thus
$$
x_{\pm i0}^{-i \frac{2\nu_+}{\mu'(r_+)}}\in\bigcap_{\e>0}H^{\frac{1}{2}-s-\e}.
$$
Since at $r=r_+$, $\kappa^\dagger+1-\e<(1/2-s)+(1-\e)$, choosing
$\kappa^\dagger$ sufficiently close to $1/2-s$ at $r=r_+$, as one may, $\tilde u$
is almost one differential order more regular than the other two terms
in this expansion.

A linear combination as in \eqref{eq:horizon-decomp} can be written in terms of
$\chi_+^{-i \frac{2\nu_+}{\mu'(r_+)}}(x)$ plus just one of the $\pm i0$
distributions, say
\begin{equation}\label{eq:horizon-supp-decomp}
u=b\chi_+^{-i \frac{2\nu_+}{\mu'(r_+)}}(x) +g x_{+i0}^{-i \frac{2\nu_+}{\mu'(r_+)}}+\tilde u;
\end{equation}
  for some $b,g\in\C$. Indeed, $(x\pm i0)^a=x_+^a+e^{\pm \pi ia}x_-^a$
  for $\Re a>0$
  shows that
  $$
  e^{-i\pi a}(x+i0)^a-e^{i\pi
    a}(x-i0)^a=(e^{-i\pi a}-e^{i\pi a})x_+^a=e^{-i\pi a} (1-e^{2i\pi a}) \Gamma(a+1)\chi^a_+,
  $$
  and now $(1-e^{2i\pi a}) \Gamma(a+1)$ is holomorphic in $\C$, so the
  equation remains valid for all $a\in\C$.
  Thus
  $$
(x-i0)^a=e^{-2i\pi a}(x+i0)^a-e^{-2i\pi a}\big((1-e^{2i\pi a})\Gamma(a+1)\big) \chi^a_+,
  $$
which deals with the $a\in\C\setminus\N_0$ case as claimed. If
$a\in\N_0$, then
$x_{\pm i0}^a=x^a\log(x\pm i0)$, so
$$
x_{+ i0}^a-x_{-i0}^a=x^a(\log(x+
i0)-\log(x-i0))=x^a(2i\pi)x_-^0=2i\pi x^a-2i\pi x^a_+= 2i\pi x^a-2i\pi\Gamma(a+1)\chi_+^a,
$$
and the first term on the right hand side is smooth,
so a rearrangement deals with this case as well.

  Since by the support conditions $u|_{x<0}=0$,
  and as $\supp\chi_+\subset\{x\geq 0\}$, we deduce from \eqref{eq:horizon-supp-decomp} that
  $$
g x_{+i0}^{-i \frac{2\nu_+}{\mu'(r_+)}}|_{x<0}=-\tilde u|_{x<0}\in \bigcap_{\e>0}H^{\kappa^\dagger+1-\e},
  $$
  with the space on the right being extendible distributions at $x=0$.
  This allows us to deduce the following:

  \begin{lemma}\label{lemma:dual-solution-principal-structure}
    For some $b\in\C$
  $$
u=b\chi_+^{-i \frac{2\nu_+}{\mu'(r_+)}}(x) +\tilde u,
$$
with $\tilde u$ in the conormal space relative to $\bigcap_{\e>0}H^{\kappa^\dagger+1-\e}$.
Further,
if $b$ vanishes then in fact $u\in C^\infty$.
\end{lemma}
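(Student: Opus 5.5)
Starting from the decomposition \eqref{eq:horizon-supp-decomp}, the first assertion amounts to showing that the term $g x_{+i0}^{a}$, $a=-i\frac{2\nu_+}{\mu'(r_+)}$, can be absorbed into $\tilde u$. The natural route is to show that $g=0$. We already know that $g x_{+i0}^{a}|_{x<0}=-\tilde u|_{x<0}$ lies in $\bigcap_{\e>0}H^{\kappa^\dagger+1-\e}$, understood as extendible distributions at $x=0$. For $x<0$ one has $x_{+i0}^a = e^{i\pi a}|x|^a$, or, when $a\in\N_0$, $|x|^a(\log|x|+i\pi)$. Since $\Re a=-s$, this lies in $H^{1/2-s-\e}$ but in no $H^{1/2-s+\e'}$ near $0$ from the left, and the same holds for any extension. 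Because we chose $\kappa^\dagger$ close to $1/2-s$, the order $\kappa^\dagger+1-\e$ exceeds $1/2-s$. Hence $g\neq 0$ would contradict this membership, except in the polynomial case $a\in\N_0$. In that case $x^a\log|x|$ is still nonsmooth at a finite Sobolev order, namely $H^{a+1/2-\e}$ sharp, and I will compare this order directly. If, in some degenerate case, $g x_{+i0}^a$ is already in the better space, it can simply be moved into $\tilde u$. Either way we obtain $u=b\chi_+^a+\tilde u$ with $\tilde u$ in the stated conormal space. The conormality of the new $\tilde u$ follows because each term in \eqref{eq:horizon-supp-decomp} is conormal.

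For the second claim, I suppose $b=0$, so that $u=\tilde u$ is conormal relative to $H^{\kappa^\dagger+1-\e}$ and supported in $x\geq 0$. I then bootstrap using the ODE $\A^\dagger u=0$ near $x=0$, which has a regular singular point there. Writing the equation as $x\mu'(r_+)D_x^2u+(2\nu_+-i\mu'(r_+))D_xu+\M^2u=0$, the indicial roots at $x=0$ are $0$ and $a$, corresponding to $x^0$ and to the $\chi_+^a$, $x_{\pm i0}^a$ behaviours. Since $u$ is classical conormal, given by the inverse Fourier transform of a classical symbol, its full expansion at $x=0$ consists of terms associated with these indicial roots: smooth pieces and terms of the form $x_{\pm i0}^{a+j}$, $\chi_+^{a+j}$. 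Iterating the Baskin--Vasy--Wunsch lemma, that is, Lemma~6.1 of \cite{BVW2015}, at successively higher regularity gives this expansion. The coefficients of the $a+j$ family are determined recursively by the leading coefficient of that family, and the supported-distribution requirement forces the $x_{+i0}$ parts to vanish as above. With $b=0$, the whole $a$-family therefore vanishes.

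What is left is supported in $x\geq 0$, conormal, and has an expansion with only integer-exponent terms $x_+^j$. The solution of the ODE with indicial root $0$ on the right is smooth on $x\geq0$. Since $u$ vanishes on $x<0$, matching requires $u$ to be smooth across $x=0$, or equivalently $u$ vanishes to infinite order at $0$. This is where I expect the main obstacle: excluding the second solution's truncation $x_+^0$ (Heaviside type) terms. The Heaviside $x_+^0$ is not annihilated by $\A^\dagger$. Indeed, $\A^\dagger x_+^0$ produces $\delta$-terms with coefficient $2\nu_+-i\mu'(r_+)+i\mu'(r_+)$ type, roughly a nonzero multiple of $\nu_+\ne0$ for $\s\ne0$. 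By linearity I would rule these out order by order: at each order, any nonzero jump in a derivative at $x=0$ produces a nonvanishing $\delta^{(j)}$ term in $\A^\dagger u$, and the coefficient of that term is nonzero unless $j$ hits the indicial root $a$, which is excluded when $a\notin\Z$. When $a\in\Z$ I would handle the exceptional order separately using that the log term is already absent. Hence all jumps vanish and $u\in C^\infty$; since $u$ is an ODE solution, it is smooth away from $x=0$.
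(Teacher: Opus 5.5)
Your argument for the first claim is the paper's. Write $\a=-i\frac{2\nu_+}{\mu'(r_+)}$ for your exponent $a$, since $a$ is also the Kerr rotation parameter. Both you and the paper show $g=0$ in \eqref{eq:horizon-supp-decomp} by comparing $g\,x_{+i0}^{\a}|_{x<0}$ with the more regular $-\tilde u|_{x<0}$. The one step you only assert is that no extension of $|x|^{\a}$ from $x<0$ lies in $H^{1/2-s+\e'}$ near $0$. That is exactly what the paper's proof supplies. It applies the multiplier $(\xi-i)^{\a-\beta}$, which preserves support in $x\geq 0$ and therefore also acts on extendible distributions on $x<0$. This moves the comparison to $L^2$, where it becomes the elementary fact that $|x|^\beta\notin L^2$ near $0^-$ for $\beta<-1/2$, while $\tilde u'\in L^2_{\loc}$. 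For $s>1/2$ your claim is a statement about negative-order extendible distributions, so some such reduction is needed to make it rigorous. The shift also treats the logarithmic case $\a\in\N_0$ uniformly.

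For the second claim your route genuinely differs. The paper argues in one line: if $b=0$, then $u=\tilde u$ lies above the threshold $1/2-s$ at the radial points over $r=r_+$. The above-threshold radial point estimate then gives $u\in C^\infty$. This uses no ODE structure, which is why it transfers to the non-separated operator. Your indicial analysis is more elementary and in fact yields $u\equiv 0$ near $r_+$. However, the mechanism you describe is wrong. For smooth $y$ one has $\A^\dagger(x_+^0y)=x_+^0\,\A^\dagger y+\a\mu'(r_+)y(0)\delta$. So only the jump of $y$ itself produces a point-supported term; jumps in derivatives produce none. Moreover, $\a\mu'(r_+)=-2i\nu_+$ does vanish for some real $\s\neq 0$, namely when $s=0$, $ak\neq 0$ and $\s=-ak/(r_+^2+a^2)$, contrary to your claim that $\nu_+\neq 0$ for $\s\neq 0$.

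The correct bookkeeping is by homogeneity. Expand the supported solution in the $\chi_+^\lambda$; this family includes $\delta^{(j)}=\chi_+^{-j-1}$ and $x_+^j=j!\,\chi_+^j$. Use that $\A^\dagger=-\mu'(r_+)(x\d_x^2+(1-\a)\d_x)$ modulo $\M^2$, that $(x\d_x^2+(1-\a)\d_x)\chi_+^\lambda=(\lambda-\a)\chi_+^{\lambda-1}$, and that $\M^2$ does not lower homogeneity. Comparing lowest homogeneities, the first nonzero coefficient must occur at $\lambda=\a$, where it is $b$. Hence $b=0$ forces all coefficients to vanish, including when $\a\in\Z$; logarithmic terms $\d_\lambda\chi_+^\lambda$ are excluded by the same comparison. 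You also do not need to iterate the Baskin--Vasy--Wunsch lemma. The ODE on $x>0$, together with the structure of distributions supported at a point, already gives the form of $u$, just as in the proof of Proposition~\ref{prop:intertwining}.
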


\begin{proof}
  To see the vanishing of $g$ in \eqref{eq:horizon-supp-decomp}, it is
  convenient to shift the orders by applying a pseudodifferential
  operator $L\in\Psi^{-i \frac{2\nu_+}{\mu'(r_+)}-\beta}$, indeed a
  Fourier multiplier, by $(\xi-i)^{-i \frac{2\nu_+}{\mu'(r_+)}-\beta}$, that preserves support in
  $\{x\geq 0\}$, and which shifts the exponents and coefficients to
\begin{equation}\label{eq:horizon-supp-decomp-prime}
  u'=b'\chi_+^{\beta}(x) +g' x_{+i0}^{\beta}+\tilde u'
\end{equation}
  with $b',g'$ nonzero multiples of $b,g$ (given by the principal
  symbol of $L$ and of the two conormal distributions)
so that $\beta<-1/2$, so that $\chi_+^{\beta}, x_{+i0}^\beta\notin
L^2_{\loc}$, but $\tilde u'\in L^2_{\loc}$, i.e.\
$\kappa^\dagger+1+s+\beta>0$, which for $\kappa^\dagger<\frac{1}{2}-s$ with
the inequality close to equality means $\beta>-3/2$, so the two
inequalities for $\beta$ can be simultaneously satisfied.
Since by the support conditions and support preserving properties of $L$, $u'|_{x<0}=0$,
  and as $\supp\chi_+\subset\{x\geq 0\}$, we deduce that
  $$
g' x_{+i0}^{\beta}|_{x<0}=-\tilde u'|_{x<0}\in L^2_{\loc},
$$
where $\loc$ stands for being in $L^2$ in compact subsets of
$(-\infty,0]$ (thus in particular near $0$).
But the left hand side is a non-zero multiple of $g'|x|^{\beta}$, and
$\beta<-1/2$, so this is not in $L^2_{\loc}$ unless $g'=0$. We thus
deduce $g'=0$, hence $g=0$, proving the first claim.
  
The final statement follows from the fact that if $b=0$ then $u$ is more regular than 
the threshold regularity, hence the microlocal radial point estimates
give the conclusion.
\end{proof}

The next step is to conjugate this operator with the Fourier transform.
We use the convention
\begin{equation} \label{eq: Fourier transform}
	\F(u)(\xi)
		:= \int_\R e^{-ir\xi} u(r) \md r.
\end{equation}
We define
\[
	\hat {\mathcal P}
		:= \F \mathcal P \F^{-1}.
\]

\begin{prop} \label{prop: second conjugation}
For $\xi \neq 0$, we have
\begin{align*}
	\xi^{-s}\hat {\mathcal P} \xi^s
		=& \ - \d_\xi \left( \xi^2 + 2 \s \xi \right) \d_\xi - 2 mi \xi\d_\xi \xi + 2 \xi \left( a^2 \s + a k \right) \\
		& \ + \xi^2 a^2 + s^2 \frac{\xi + 2\s} \xi + \lambda.
\end{align*}
\end{prop}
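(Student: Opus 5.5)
The plan is a direct symbolic computation. The conjugation $\hat{\mathcal P}=\F\mathcal P\F^{-1}$ is determined by how $\F$ intertwines the two generators $r$ and $\d_r$. With the convention \eqref{eq: Fourier transform}, integrating by parts gives
\[
\F\,\d_r=i\xi\,\F,\qquad \F\,r=i\d_\xi\,\F .
\]
So $\hat{\mathcal P}$ is obtained from the expression for $\mathcal P$ by the substitutions $\d_r\mapsto i\xi$ and $r\mapsto i\d_\xi$. Because $\mathcal P$ is written in divergence form, I will keep the operator ordering exactly as written; for instance $\d_r r\mapsto i\xi\cdot i\d_\xi=-\xi\d_\xi$ and $r\d_r\mapsto-\d_\xi\xi$. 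All identities are first established on $\mathcal S(\R)$, hence on tempered distributions. Restricting to $\xi\neq0$ is only needed afterwards, because $\xi^{\pm s}$ is smooth only there.

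First I will write $\mu(r)=r^2-2mr+a^2$ and $q(r)=i\s(r^2+a^2)+iak+(r-m)s$, so that
\[
\mathcal P=-\d_r\mu\d_r-\d_r q-q\d_r-4si\s r+\lambda .
\]
Substituting gives $\mu(i\d_\xi)=-\d_\xi^2-2mi\d_\xi+a^2$, and therefore
\[
-\d_r\mu\d_r\ \mapsto\ \xi\bigl(-\d_\xi^2-2mi\d_\xi+a^2\bigr)\xi .
\]
Next, with $\hat q=-i\s\d_\xi^2+i(\s a^2+ak)+s(i\d_\xi-m)$, the middle terms become $-(i\xi\hat q+\hat q\, i\xi)$. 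Expanding, this equals
\[
-\s(\xi\d_\xi^2+\d_\xi^2\xi)\cdot(-1)\ \text{(i.e.\ }+\s(\xi\d_\xi^2+\d_\xi^2\xi))\ +2\xi(\s a^2+ak)+s(\xi\d_\xi+\d_\xi\xi)+2ism\xi .
\]
The zeroth order term $-4si\s r$ becomes $4s\s\d_\xi$.

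Then I will collect the second order parts. The identities $\xi\d_\xi^2\xi=\d_\xi\xi^2\d_\xi+\d_\xi\xi-\xi\d_\xi$ type relations (more precisely, rewriting $\xi\d_\xi^2\xi$, $\xi\d_\xi^2$ and $\d_\xi^2\xi$ in terms of $\d_\xi\xi^2\d_\xi$, $\d_\xi\xi\d_\xi$ and first order commutator terms) should combine the terms into $-\d_\xi(\xi^2+2\s\xi)\d_\xi$. The leftover first order pieces are the terms proportional to $s$: namely $s(\xi\d_\xi+\d_\xi\xi)$, $4s\s\d_\xi$ and $2ism\xi$. These are exactly what the conjugation by $\xi^s$ must remove, replacing them by $s^2\frac{\xi+2\s}{\xi}$.

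The conjugation uses $\xi^{-s}\d_\xi\xi^s=\d_\xi+s/\xi$ on $\xi\neq0$. Applied to $-\d_\xi(\xi^2+2\s\xi)\d_\xi$, it produces a cross term $-s\bigl(\d_\xi(\xi+2\s)+(\xi+2\s)\d_\xi\bigr)$ and a zeroth order term $-s^2(\xi+2\s)/\xi$ (up to a derivative correction to be tracked). The cross term should cancel $s(\xi\d_\xi+\d_\xi\xi)+4s\s\d_\xi$, and the $s^2$ terms should combine into $+s^2\frac{\xi+2\s}{\xi}$. Likewise, conjugating $-2mi\xi\d_\xi\xi$ yields $-2mis\xi$, which cancels $2ism\xi$. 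The remaining terms $2\xi(a^2\s+ak)$, $a^2\xi^2$ and $\lambda$ commute with $\xi^s$ and are left unchanged.

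The only genuine obstacle is this bookkeeping: tracking operator orderings and the commutator terms such as $[\d_\xi,\xi]=1$ and $\d_\xi(1/\xi)$, so that the first order $s$-terms cancel exactly and the $s^2$ coefficient comes out as $(\xi+2\s)/\xi$ rather than a nearby expression. I will do this by writing every term in the normal form $\d_\xi f\d_\xi+g\d_\xi+\d_\xi g+h$ before conjugating. Since conjugation by $\xi^s$ preserves the symmetric form $\d_\xi f\d_\xi$ up to the explicit correction above, the final comparison then reduces to matching the coefficients $f$, $g$ and $h$.
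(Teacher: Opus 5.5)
Your route is the paper's: substitute $\d_r\mapsto i\xi$, $r\mapsto i\d_\xi$ with the ordering preserved, put the second order part into divergence form, and conjugate using $\xi^{-s}\d_\xi\xi^s=\d_\xi+s/\xi$. Since the statement is nothing more than this computation, the intermediate steps have to be right, and one of yours is not. In $-\bigl(i\xi\hat q+\hat q\,i\xi\bigr)$, with $\hat q$ containing $-i\s\d_\xi^2$, the constants in both products multiply to $-\s$. So the second order contribution is $-\s(\xi\d_\xi^2+\d_\xi^2\xi)$, not $+\s(\xi\d_\xi^2+\d_\xi^2\xi)$; the other terms of your expansion are correct. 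With your sign the second order part would come out as $-\d_\xi(\xi^2-2\s\xi)\d_\xi$. Conjugation would then produce the cross term $-s\bigl(\d_\xi(\xi-2\s)+(\xi-2\s)\d_\xi\bigr)$, which does not cancel $s(\xi\d_\xi+\d_\xi\xi)+4s\s\d_\xi$, so as written your bookkeeping does not close.

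The relation you quote, $\xi\d_\xi^2\xi=\d_\xi\xi^2\d_\xi+\d_\xi\xi-\xi\d_\xi$, is also off by $[\d_\xi,\xi]=1$. In fact $\xi\d_\xi^2\xi=\d_\xi\xi^2\d_\xi$ exactly (both give $\xi^2f''+2\xi f'$ on a test function $f$), and $\xi\d_\xi^2+\d_\xi^2\xi=2\d_\xi\xi\d_\xi$ exactly. Hence $-\xi\d_\xi^2\xi-\s(\xi\d_\xi^2+\d_\xi^2\xi)=-\d_\xi(\xi^2+2\s\xi)\d_\xi$ with no lower order leftovers, whereas your version would leave a spurious constant.

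Once these are fixed, the rest goes through as in the paper. There is no extra derivative correction in the conjugation: with $g=\xi(\xi+2\s)$ one has exactly $(\d_\xi+s/\xi)\,g\,(\d_\xi+s/\xi)=\d_\xi g\d_\xi+s\bigl(\d_\xi(\xi+2\s)+(\xi+2\s)\d_\xi\bigr)+s^2(\xi+2\s)/\xi$.

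The first order term $s\bigl((\xi+2\s)\d_\xi+\d_\xi(\xi+2\s)\bigr)$ must itself be conjugated. It is reproduced, plus an extra $2s^2(\xi+2\s)/\xi$. Combined with the $-s^2(\xi+2\s)/\xi$ from the second order part, this gives the final coefficient $s^2(\xi+2\s)/\xi$. Finally, $\xi^{-s}(-2mi\xi\d_\xi\xi)\xi^s=-2mi\xi\d_\xi\xi-2mis\xi$ removes $2ism\xi$, as you say.
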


\begin{remark}
Note that $\xi^{-s}\hat {\mathcal P} \xi^s$ is a formally self-adjoint differential operator, 
which is the key for the boundary pairing. 
\end{remark}

\noindent
As an intermediate step, we compute the Fourier transform of $\mathcal P$.

\begin{lemma}
We have
\begin{align*}
	\hat {\mathcal P}
		= & \ - \d_\xi \left( \xi^2 + 2 \s \xi \right) \d_\xi- 2 mi \xi\d_\xi \xi + s \left( \left( \xi + 2 \s \right) \d_\xi + \d_\xi \left( \xi + 2 \s \right) \right) + 2 ism \xi \\
		& \ + 2 \xi \left( a^2 \s + a k \right) + \xi^2 a^2 + \lambda.
\end{align*}
\end{lemma}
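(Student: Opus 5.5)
The plan is to conjugate term by term, using the dictionary that the convention \eqref{eq: Fourier transform} induces on first order building blocks. For tempered distributions (in particular for the elements of the kernel discussed in Lemma~\ref{lemma:dual-solution-principal-structure}, which are supported in $r\geq r_+$ and symbolic at infinity, though the operator identity itself is purely algebraic) one has
\[
\F\,\d_r\,\F^{-1}=i\xi,\qquad \F\, r\,\F^{-1}=i\d_\xi .
\]
Hence any polynomial-coefficient differential operator in $r$ is mapped to the operator obtained by substituting $\d_r\mapsto i\xi$ and $r\mapsto i\d_\xi$, keeping the order of the factors. Since the coefficients of $\mathcal P$ are polynomials of degree at most two in $r$, this gives an exact differential operator in $\xi$ of second order. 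Before substituting, I will write $\mathcal P$ with every term in a symmetric ordered form: $\d_r f(r)\d_r$ for the principal part, and $\d_r g(r)+g(r)\d_r$ for the first order part. This keeps the resulting $\xi$-expressions visibly in the divergence form of the statement.

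The terms are then handled one at a time. First expand $\mu(r)=r^2-2mr+a^2$.
\begin{itemize}
\item The term $-\d_r r^2\d_r$ becomes $-(i\xi)(i\d_\xi)^2(i\xi)=-\xi\d_\xi^2\xi=-\d_\xi\xi^2\d_\xi$.
\item The term $2m\d_r r\d_r$ becomes $-2mi\,\xi\d_\xi\xi$.
\item The term $-a^2\d_r^2$ becomes $a^2\xi^2$.
\end{itemize}
Next, the first order part with coefficient $i((r^2+a^2)\s+ak)$ is treated as follows.
\begin{itemize}
\item The $r^2\s$ piece gives $-i\s\bigl[(i\xi)(i\d_\xi)^2+(i\d_\xi)^2(i\xi)\bigr]=-\s(\xi\d_\xi^2+\d_\xi^2\xi)$, which equals $-\d_\xi(2\s\xi)\d_\xi$. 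Combined with the previous item, this produces the principal term $-\d_\xi(\xi^2+2\s\xi)\d_\xi$.
\item The constant piece $i(a^2\s+ak)$ gives $2\xi(a^2\s+ak)$.
\end{itemize}
The spin terms are treated similarly.
\begin{itemize}
\item The term $-s(\d_r(r-m)+(r-m)\d_r)$ becomes $s\bigl((\xi)\d_\xi+\d_\xi\xi\bigr)+2ism\xi$, after using $(i\xi)(i\d_\xi)=-\xi\d_\xi$.
\item The potential term $-4si\s r$ becomes $4s\s\d_\xi$. This term is absorbed into $s\bigl((\xi+2\s)\d_\xi+\d_\xi(\xi+2\s)\bigr)$.
\end{itemize}
Adding $\lambda$ then gives exactly the displayed formula for $\hat{\mathcal P}$.

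For Proposition~\ref{prop: second conjugation} itself, I then conjugate by $\xi^s$ on $\xi\neq0$ (on $\xi>0$ and on $\xi<0$ separately, with any fixed branch). This uses
\[
\xi^{-s}\d_\xi\xi^s=\d_\xi+s\xi^{-1}.
\]
\begin{itemize}
\item In the principal term, $-\xi^{-s}\d_\xi q\,\d_\xi\,\xi^s$ with $q=\xi^2+2\s\xi$ becomes $-(\d_\xi+s/\xi)q(\d_\xi+s/\xi)$. This generates first order terms $-s\bigl(\d_\xi(q/\xi)+(q/\xi)\d_\xi\bigr)=-s\bigl(\d_\xi(\xi+2\s)+(\xi+2\s)\d_\xi\bigr)$, which cancel exactly the first order spin term of $\hat{\mathcal P}$. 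The same step generates the zeroth order term $-s^2q/\xi^2=-s^2(\xi+2\s)/\xi$.
\item The first order spin term, after conjugation, contributes the extra zeroth order quantity $2s^2(\xi+2\s)/\xi$.
\item The term $-2mi\xi\d_\xi\xi$ acquires $-2mis\xi$, which cancels the $2ism\xi$.
\end{itemize}
The net zeroth order spin contribution is then $s^2(\xi+2\s)/\xi$, as claimed.

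The only real obstacle is bookkeeping of orderings and signs: whether $\d_r\mu\d_r$ is written with $\mu$ in the middle, and the factors of $i$ coming from $r\mapsto i\d_\xi$. I will check each identity by applying both sides to a test function, or equivalently by comparing Weyl symbols. No analytic difficulty arises, since all coefficients are polynomial and the identities hold on $\mathcal S'$ (and, for the conjugation by $\xi^s$, on distributions supported away from $\xi=0$).
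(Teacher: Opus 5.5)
Your proposal is correct and takes essentially the paper's approach: substitute $\d_r\mapsto i\xi$, $r\mapsto i\d_\xi$ term by term, preserving order, then recombine $-\xi\d_\xi^2\xi-\s(\xi\d_\xi^2+\d_\xi^2\xi)$ into $-\d_\xi(\xi^2+2\s\xi)\d_\xi$. Your signs and factors of $i$ all check out, and your sketch of the $\xi^s$-conjugation also matches the paper's proof of Proposition~\ref{prop: second conjugation}.
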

\begin{proof}
With our convention, $\F(\d_r u)(\xi) =  i \xi$ and $\F(r u)(\xi) =  i \d_\xi \F(u)(\xi)$.
Formally replacing all $\d_r$ by $i\xi$ and $r$ by $i \d_\xi$ in the expression for $\mathcal P$ in Lemma~\ref{le: conjugation}, we get
\begin{align*}
	\F \mathcal P \F^{-1}
		= & \ - (i\xi) \left( (i\d_\xi)^2 - 2m(i\d_\xi) + a^2 \right) (i\xi) \\
		& \ - (i\xi) \left( i \left( \left( \left(  i \d_\xi \right)^2 + a^2 \right) \s + ak \right)  + (i \d_\xi - m)s \right) \\
		& \ - \left( i \left( \left( \left( i \d_\xi \right)^2  + a^2 \right) \s + ak \right) + ( i \d_\xi - m)s \right) (i\xi) - 4 s i \s i \d_\xi + \lambda \\
		&= -\xi \left( \d_\xi^2 + 2m i\d_\xi - a^2 \right) \xi + \xi \left( \left( - \d_\xi^2 + a^2 \right) \s + ak  + s \left( \d_\xi + i m \right) \right) \\
		& \qquad +\left(\left( - \d_\xi^2  + a^2 \right) \s +
           ak + ( \d_\xi + im)s \right) \xi + 4 s \s \d_\xi + \lambda.
\end{align*}
We rewrite the highest order part as
\begin{align*}
	- \xi \d_\xi^2 \xi - \s \d_\xi^2 \xi - \s \xi \d_\xi^2
		= & \ - [\xi, \d_\xi] \d_\xi \xi - \d_\xi \xi [\d_\xi, \xi] - \d_\xi \xi^2 \d_\xi \\
		& \ - \s \d_\xi [\d_\xi, \xi] - 2 \s \d_\xi \xi \d_\xi - \s [\xi, \d_\xi] \d_\xi \\
		= & \ - \d_\xi \left( \xi^2 + 2 \s \xi \right) \d_\xi.
\end{align*}
Inserting this proves the assertion.
\end{proof}

\noindent
The final step to get a self-adjoint operator is to conjugate $\hat {\mathcal P}$ by $\xi^s$.

\begin{proof}[Proof of Proposition~\ref{prop: second conjugation}]
The statement follows by noting that
\begin{align*}
	- \xi^{-s} \d_\xi \left( \xi^2 + 2 \s \xi \right) \d_\xi \xi^s
		= & \ - \xi^{-s} [\d_\xi, \xi^s] \left( \xi^2 + 2 \s \xi \right) \d_\xi - \xi^{-s} \d_\xi \left( \xi^2 + 2 \s \xi \right) [\d_\xi, \xi^s] \\
		& \ - \d_\xi \left( \xi^2 + 2 \s \xi \right) \d_\xi \\
		= & \ - s \left( \left( \xi + 2 \s \right) \d_\xi + \d_\xi \left( \xi + 2 \s \right) \right) \\
		&\ - s^2 \frac{\xi + 2 \s}\xi - \d_\xi \left( \xi^2 + 2 \s \xi \right) \d_\xi
\end{align*}
and
\[
	\xi^{-s} 2 mi \xi \d_\xi \xi\xi^s
		= 2 m i s \xi + 2 mi \xi \d_\xi \xi,
\]
and
\[
	\xi^{-s} s \left( \left( \xi + 2 \s \right) \d_\xi + \d_\xi \left( \xi + 2 \s \right) \right) \xi^s
		= 2 s^2 \frac{\xi + 2\s} \xi + s \left( \left( \xi + 2 \s \right) \d_\xi + \d_\xi \left( \xi + 2 \s \right) \right).
\]
\end{proof}

We will also need the detailed behavior of the global Fourier
transform for our solutions.
Given any $k, l \in \R$, the weighted Sobolev spaces on $\R$ are
\[
	H^{k,l}(\R)
		:= \{ u \in \mathcal S'(\R) \mid \ldr{x}^l u \in H^k(\R) \},
\]
with norm 
\[
	\norm{u}_{H^{k, l}(\R)} := \norm{\ldr{x}^lu}_{H^k(\R)}.
\]
We define the Fr\'echet subspaces
\[
	H_*^{k,l}(\R)
		:= \{ u \in \mathcal S'(\R) \mid \left( x \d_x \right)^m u\in H^{k,l}(\R), \ \forall m \in \N_0 \},
\]
with the natural semi-norms.
\begin{lemma} \label{le: Sobolev spaces} 
Let $k, l \in \R$.
The Fourier transform extends to an isometric isomorphism
\[
	\F: H^{k,l}(\R) 
		\to H^{l,k}(\R),
\]
and an isomorphism
\[
	\F: H_*^{k,l}(\R) 
		\to H_*^{l,k}(\R).
\]
\end{lemma}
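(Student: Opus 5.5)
The plan is to reduce both statements to Plancherel and to the intertwining identities $\F(\d_x u)=i\xi\F u$, $\F(xu)=i\d_\xi\F u$ under the convention \eqref{eq: Fourier transform}. For the first isomorphism I would use the norm $\norm{u}_{H^k}=(2\pi)^{-1/2}\norm{\ldr{\xi}^k\F u}_{L^2}$, so that $H^{0,l}=\ldr{x}^{-l}L^2$ and $H^{k,0}=\F^{-1}(\ldr{\xi}^{-k}L^2)$, with the normalization chosen so that $\F$ is an isometry on $L^2$. The identity
\[
\ldr{\xi}^{l}\,\F\big(\ldr{x}^{k}\,\cdot\big)\ \text{versus}\ \F\big(\ldr{D}^{k}\ldr{x}^{l}\,\cdot\big)
\]
is then the crux. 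For integer $k,l\ge 0$ one can expand $\ldr{x}^{2l}=(1+x^2)^l$ and $\ldr{D}^{2k}$ as polynomial differential operators. Commuting multiplication by $x$ past $D_x$ produces only lower order terms, so $\norm{\ldr{x}^l u}_{H^k}$ is equivalent to $\norm{\ldr{D}^k\ldr{x}^l u}_{L^2}$ and also to $\norm{\ldr{x}^l\ldr{D}^k u}_{L^2}$. Conjugating by $\F$ swaps the roles of $x$ and $D$.

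For general real $k,l$ I would not expand. Instead I would observe that $\ldr{x}^l\ldr{D}^k\ldr{x}^{-l}\ldr{D}^{-k}$ is a pseudodifferential operator of order $(0,0)$ in the scattering calculus, hence bounded on $L^2$. This gives equivalence of the two norms $\norm{\ldr{D}^k\ldr{x}^l u}$ and $\norm{\ldr{x}^l\ldr{D}^k u}$, and Plancherel then yields $\norm{\F u}_{H^{l,k}}\simeq\norm{u}_{H^{k,l}}$. The claim of an \emph{isometric} isomorphism holds exactly if the norm on $H^{k,l}$ is defined symmetrically, or if one interprets the statement up to the choice of equivalent norm. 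I would remark that the second reading is the one needed, since only the isomorphism matters later. The main obstacle is precisely this commutation of weights with derivatives for non-integer orders. I would handle it by interpolation between integer orders if I wished to avoid citing the scattering calculus, since complex interpolation of the weighted spaces commutes with $\F$. Surjectivity follows since $\F^{-1}$ satisfies the same estimates, with $\F^{-1}$ given by the conjugate formula.

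For the $H_*$ statement, the key identity is that $\F$ intertwines $x\d_x$ with $-\d_\xi\xi=-(\xi\d_\xi+1)$. Indeed $\F(x\d_x u)=i\d_\xi(i\xi\F u)=-\d_\xi(\xi\F u)$. Hence for each $m$, $\F\big((x\d_x)^m u\big)=(-1)^m(\xi\d_\xi+1)^m\F u$. By the first part, $(x\d_x)^j u\in H^{k,l}$ for all $j\le m$ is equivalent to $(\xi\d_\xi+1)^j\F u\in H^{l,k}$ for all $j\le m$. Expanding binomially in $\xi\d_\xi$, this is in turn equivalent to $(\xi\d_\xi)^j\F u\in H^{l,k}$ for $j\le m$. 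The seminorms correspond with constants, so $\F$ is a continuous bijection of Fréchet spaces with continuous inverse, as claimed.
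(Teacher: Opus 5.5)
Your proof is correct. For the $H_*^{k,l}$ part it is exactly the paper's argument: $\F$ intertwines $x\d_x$ with $-(\xi\d_\xi+1)$, followed by a binomial expansion and induction. For the first part you take a genuinely different route, and it is the right one.

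\textbf{What the paper does.} The paper avoids any commutation of weights and derivatives. With $D=-i\d_x$ and $A=\F^{-1}\ldr{\xi}^k\F\ldr{x}^l=\ldr{D}^k\ldr{x}^l$, it writes $\norm{u}_{H^{k,l}}=\norm{Au}_{L^2}$. It then correctly identifies $\norm{A^*u}_{L^2}=\norm{\ldr{x}^l\ldr{D}^ku}_{L^2}$ with $\norm{\F u}_{H^{l,k}}$, up to $2\pi$ factors. Finally it sets $\norm{Au}_{L^2}=\norm{A^*u}_{L^2}$, which gives an exact isometry with no calculus.

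\textbf{Why that step fails.} The equality $\norm{Au}_{L^2}=\norm{A^*u}_{L^2}$ for all $u$ would require $A$ to be normal, and it is not. Take $k=l=1$ and use $aD^2a=Da^2D-aa''$ together with its Fourier conjugate $b(D)x^2b(D)=xb(D)^2x-(bb'')(D)$. One gets $A^*A-AA^*=\ldr{D}^{-2}-\ldr{x}^{-2}$, that is,
\[
\norm{\ldr{D}\ldr{x}u}^2-\norm{\ldr{x}\ldr{D}u}^2=\norm{\ldr{D}^{-1}u}^2-\norm{\ldr{x}^{-1}u}^2 .
\]
The right-hand side is nonzero for a sufficiently wide Gaussian.

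\textbf{Consequence.} Your caveat is well founded. With the norm as defined, $\F$ is an isomorphism with equivalent norms, not an isometry. The real content is the comparison of $\norm{\ldr{D}^k\ldr{x}^lu}$ with $\norm{\ldr{x}^l\ldr{D}^ku}$, which you supply via $\Psisc^{0,0}$ boundedness or interpolation. Equivalence of norms is all that the $H_*$ statement and the later uses of the lemma need.

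\textbf{Small points in your write-up.}
\begin{itemize}
\item The two-sided bound needs both $\ldr{x}^l\ldr{D}^k\ldr{x}^{-l}\ldr{D}^{-k}$ and $\ldr{D}^k\ldr{x}^l\ldr{D}^{-k}\ldr{x}^{-l}$ to be bounded; you only state the first.
\item Your integer computation covers only $k,l\geq 0$. The interpolation alternative must therefore be combined with duality (the bilinear transpose of $\F$ is $\F$) to reach negative orders.
\item With the convention $\F u(\xi)=\int e^{-ir\xi}u(r)\,dr$, $\F$ is $\sqrt{2\pi}$ times a unitary, so it is not itself an isometry on $L^2$. This only affects constants.
\end{itemize}
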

\begin{proof}
For the first assertion, since
\[
	\left( \F^{-1} \ldr{\xi}^k \F \ldr{x}^l \right) ^*u
		= \ldr{x}^l \F^{-1} \ldr{\xi}^k \F u
\]
for any $u \in \mathcal S$, it follows that
\begin{align*}
	\norm{u}_{H^{k,l}}
		= & \ \norm{\ldr{x}^l u }_{H^k} 
		= \norm{\ldr{\xi}^k \F \ldr{x}^l u }_{L^2} 
		= \norm{\F^{-1} \ldr{\xi}^k \F \ldr{x}^l u }_{L^2}\\
		= & \ \norm{\left( \F^{-1} \ldr{\xi}^k \F \ldr{x}^l \right)^* u }_{L^2}
		= \norm{\ldr{x}^l \F^{-1} \ldr{\xi}^k \F u }_{L^2}
		= \norm{\F u}_{H^{l, k}}.
\end{align*}
Note that
\[
	\F\left( (x \d_x)^m u \right)
		= (- \d_\xi \xi)^m \F u
		= \left( - \xi \d_\xi  - 1 \right)^m \F u
		= \sum_{n = 0}^m (-1)^m {m \choose n} (\xi \d_\xi)^n \F(u)(\xi).
\]
We thus conclude that
\begin{align*}
	\norm{(x \d_x)^m u }_{H^{k,l}}
		= & \ \norm{\F\left( (x \d_x)^m u \right)}_{H^{l,k}} \\
		\geq & \ \norm{\left( \xi \d_\xi \right)^m \F(u)}_{H^{l, k}} - \sum_{n = 0}^{m-1} {m \choose n}\norm{\left( \xi \d_\xi \right)^n \F(u)}_{H^{l,k}},
\end{align*}
for $m \geq 1$.
This proves the second assertion by induction.
\end{proof}

Recall that $\chi_+^a=x_+^a/\Gamma(a + 1)$.

\begin{lemma} \label{le: Fourier transform of homogen}
For any $a \in \C$, 
\[
	\F(e^{-x}\chi_+^a)(\xi)
		= e^{-i(a+1)\pi/2}\left( \xi - i \right)^{-a-1}.
\]
Consequently, $e^{-x}\chi_+^a \in H_*^{\Re(a) + \frac12 - \epsilon, \infty}(\R)$ for all $a \in \C$ and all $\epsilon > 0$.
\end{lemma}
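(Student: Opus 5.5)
The plan is to first compute the Fourier transform for $\Re a>-1$, where $e^{-x}\chi_+^a$ is an integrable function, and then extend to all $a\in\C$ by analytic continuation. Afterwards the Sobolev statement is read off from Lemma~\ref{le: Sobolev spaces}.

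For $\Re a>-1$ we compute directly, using the convention \eqref{eq: Fourier transform}:
\[
\F(e^{-x}\chi_+^a)(\xi)=\frac{1}{\Gamma(a+1)}\int_0^\infty x^a e^{-x(1+i\xi)}\,\md x=(1+i\xi)^{-a-1}.
\]
This is the standard Gamma integral, evaluated first for real $\xi=0$ and then continued by rotating the contour, since $\Re(1+i\xi)=1>0$. The power $(1+i\xi)^{-a-1}$ is taken with the principal branch, which is well defined because $1+i\xi$ lies in the right half plane. Next write $1+i\xi=i(\xi-i)$. Since $\xi-i$ lies in the lower half plane, $\arg(\xi-i)\in(-\pi,0)$, so $\arg(i(\xi-i))=\pi/2+\arg(\xi-i)\in(-\pi/2,\pi/2)$. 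The arguments therefore add without crossing the branch cut, and $(1+i\xi)^{-a-1}=e^{-i(a+1)\pi/2}(\xi-i)^{-a-1}$, where $(\xi-i)^{-a-1}$ is defined via $\arg\in(-\pi,0)$. The only real care needed in the whole proof is this branch bookkeeping.

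To extend to all $a\in\C$, note that $a\mapsto e^{-x}\chi_+^a$ is an entire family of tempered distributions. Indeed, $\chi_+^a$ is entire in $a$ as a distribution, and multiplying by $e^{-x}$ keeps the family tempered: near $x=0$ one cuts off, and for large $x$ there is exponential decay, so for Schwartz $\phi$ the pairing $\chi_+^a[e^{-x}\phi]$ is entire. The Fourier transform is continuous on $\mathcal S'$, so $\F(e^{-x}\chi_+^a)$ is entire in $a$. The right-hand side is also entire in $a$, pointwise in $\xi$ and in $\mathcal S'$, since $|(\xi-i)^{-a-1}|\lesssim\ldr{\xi}^{-\Re a-1}$ with locally uniform bounds in $a$. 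Two entire families that agree for $\Re a>-1$ agree everywhere. Alternatively one can use the recursion $\chi_+^{a-1}=\d_x\chi_+^a$ together with $\d_x(e^{-x}\chi_+^a)=e^{-x}\chi_+^{a-1}+ e^{-x}\chi_+^{a}$ and $\F\d_x=i\xi$; this gives $\F(e^{-x}\chi_+^{a-1})=(i\xi-1)\F(e^{-x}\chi_+^a)$, consistent with the formula since $i\xi-1=i(\xi-i)\cdot e^{i\pi/2}\cdot(-i)\cdots$, which one checks directly.

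For the Sobolev statement, set $f(\xi)=(\xi-i)^{-a-1}$. This function is smooth and symbolic of order $-\Re a-1$, so $(\xi\d_\xi)^m f=O(\ldr{\xi}^{-\Re a-1})$ for all $m$. Hence $\ldr{\xi}^{k}(\xi\d_\xi)^m f\in L^2$ whenever $k-\Re a-1<-1/2$, that is, for $k=\Re a+1/2-\e$. Since $f$ is smooth, it also has all derivatives bounded by $\ldr\xi^{-\Re a-1-j}$, which gives $f\in H^{\infty,\Re a+1/2-\e}_*$. By Lemma~\ref{le: Sobolev spaces}, applied with $\F^{-1}$ (which behaves identically up to reflection and constants), $e^{-x}\chi_+^a\in H_*^{\Re a+1/2-\e,\infty}$.
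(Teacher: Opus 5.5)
Your argument is correct and is essentially the paper's proof: the Gamma integral for $\Re a>-1$ (the paper rotates the contour $z=ix(\xi-i)$, while you take the principal branch of $(1+i\xi)^{-a-1}$ and track the argument explicitly), then analytic continuation in $a$, then Lemma~\ref{le: Sobolev spaces} applied to the symbol $(\xi-i)^{-a-1}$ of order $-\Re a-1$. The only slip is in your optional recursion aside. There $\d_x(e^{-x}\chi_+^a)=e^{-x}\chi_+^{a-1}-e^{-x}\chi_+^a$, so $\F(e^{-x}\chi_+^{a-1})=(1+i\xi)\F(e^{-x}\chi_+^a)=i(\xi-i)\F(e^{-x}\chi_+^a)$, which matches the formula, whereas the factor $i\xi-1$ you wrote does not.
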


\begin{proof}
Let first $\Re(a) > -1$, so that $e^{-x}x_+^a$ is integrable. 
In that case,
\begin{align*}
	\F(e^{-x}x_+^a)(\xi)
		= & \ \int_0^\infty e^{ix(-\xi + i)} x^a \md x \\
		= & \ \left(i(\xi - i)\right)^{-a} \int_0^\infty e^{ix(-\xi + i)} \left(  ix(\xi - i) \right)^a \md x \\
		= & \ \left( i(\xi - i)\right)^{-a-1} \int_\gamma z^a e^{-z} \md z,
\end{align*}
where the complex contour $\gamma$ is given by
\[
	\gamma(x)
		=  ix(\xi - i) 
		= -i x \xi + x,
\]
defined for $x \in (0, \infty)$.
By the Cauchy integral formula, since $e^{-z}$ is exponentially decaying, we get
\[
	\int_\gamma z^a e^{-z} dz
		= \int_0^\infty s^a e^{-s} ds
		= \Gamma(a+1).
\]
This proves the formula for $\Re(a) > -1$.
The formula now extends by analyticity to all $a \in \C$.
The second assertion now follows from Lemma~\ref{le: Sobolev spaces}.
\end{proof}

We can now compute the Fourier transform of the dual solution which we
here name $\mathfrak u$:

\begin{prop} \label{prop: u Fourier transform}
There is a $M \in \R$, such that
\[	
	e^{ ir_+ \xi}\hat {\mathfrak u}(\xi)
		- be^{-i(\a+1)\pi/2}\left( \xi - i \right)^{-\a-1} 
		\in \bigcap_{\e>0}H_*^{M, \Re(\a) + 3/2-\e}(\R),
\]
where 
\[
	\a
		:= - \frac{i \left( \left( r_+^2 + a^2 \right) \s + ak \right) }{\sqrt{m^2 - a^2}} - s.
\]
\end{prop}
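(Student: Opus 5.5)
We work in the translated variable $x=r-r_+$, i.e.\ with $v(x):=\mathfrak u(x+r_+)$. With the convention \eqref{eq: Fourier transform}, $\F v(\xi)=e^{ir_+\xi}\hat{\mathfrak u}(\xi)$, which is exactly the left-hand quantity in the statement. First we check that the exponent $\a$ agrees with the exponent in Lemma~\ref{lemma:dual-solution-principal-structure}. Since $\mu'(r_+)=2(r_+-m)=2\sqrt{m^2-a^2}$ and $\nu_+=((r_+^2+a^2)\s+ak)-i(r_+-m)s$, we get
$$
-i\frac{2\nu_+}{\mu'(r_+)}=-\frac{i((r_+^2+a^2)\s+ak)}{\sqrt{m^2-a^2}}-s=\a .
$$
So Lemma~\ref{lemma:dual-solution-principal-structure} gives $v=b\chi_+^\a(x)+\tilde u$, with $\tilde u$ conormal near $x=0$ relative to $\bigcap_{\e>0}H^{\kappa^\dagger+1-\e}$. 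We take $\kappa^\dagger$ arbitrarily close to $\frac12-s=\Re\a+\frac12$, so this is conormal relative to $H^{\Re\a+3/2-\e}$ for every $\e>0$.

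The plan is to split
$$
v=b\,e^{-x}\chi_+^\a(x)+w,\qquad w:=b(1-e^{-x})\chi_+^\a(x)+\tilde u ,
$$
and to show that $w\in\bigcap_{\e>0}H_*^{\Re\a+3/2-\e,\,-M}(\R)$ for some $M$. Once this holds, Lemma~\ref{le: Sobolev spaces} gives $\F w\in\bigcap_{\e>0}H_*^{-M,\Re\a+3/2-\e}$. Lemma~\ref{le: Fourier transform of homogen} computes the first term exactly as $b e^{-i(\a+1)\pi/2}(\xi-i)^{-\a-1}$. Together these give the proposition, after renaming $-M$ as $M$.

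To prove the membership of $w$, we use a partition of unity $1=\phi_0+\phi_1+\phi_\infty$ on $\R$. Here $\phi_0$ is supported near $x=0$, $\phi_\infty$ is supported in $x>R$, and $\phi_1$ is compactly supported away from $0$.

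Near $0$: $(1-e^{-x})\chi_+^\a=x\,g(x)\chi_+^\a$ with $g$ smooth, and $x\chi_+^\a=(\a+1)\chi_+^{\a+1}$. This is conormal and gains one order, so it lies in $H^{\Re\a+3/2-\e}$ and is stable under $(x\d_x)^m$. The same holds for $\tilde u$, since conormality at $0$ means stability under $x\d_x$ (which is the same as $(x+r_+)\d_x$ up to smooth factors locally). Hence $\phi_0w$ is in $H_*^{\Re\a+3/2-\e,l}$ for every $l$.

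On $\operatorname{supp}\phi_1$: for $x<0$, $w=0$ by the support property. For $x>0$, $w$ is smooth, because $\mathfrak u$ solves an ODE with real-analytic coefficients that is non-degenerate for $r>r_+$ (where $\mu\neq0$), and $e^{-x}\chi_+^\a$ is smooth there. So $\phi_1 w\in C_c^\infty$.

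Near infinity: $e^{-x}\chi_+^\a$ is Schwartz, and $\mathfrak u$ is symbolic at $r=\infty$ (the Haber--Vasy regularity recalled in Section~\ref{sec:Fredholm}). Thus $(x\d_x)^m\mathfrak u$ is bounded by $C\ldr{x}^{N}$ together with all its derivatives, for a fixed $N$. Hence $\phi_\infty w\in H_*^{k,-M}$ for every $k$, provided $M>N+\frac12$.

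The step needing the most care is this last global bookkeeping. We must make sure that the symbolic order at infinity is uniform, so that a single weight $-M$ works for all $(x\d_x)^m$; symbolic estimates give exactly this. We also need the local conormality at $0$, stated relative to $H^{\kappa^\dagger+1-\e}$, to be expressed with the global operator $x\d_x$ of the definition of $H_*$; this works because $x$ is centred at $r_+$ after translation. We also note that the order $\Re\a+3/2-\e$ requires the freedom to choose $\kappa^\dagger$ close to threshold. This is allowed, since the kernel of $\A^\dagger$ is independent of the orders.
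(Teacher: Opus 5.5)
Your proof is correct and follows the paper's argument. You split off $b e^{-x}\chi_+^\a$, whose Fourier transform is given exactly by Lemma~\ref{le: Fourier transform of homogen}, place the remainder in $H_*$-spaces using Lemma~\ref{lemma:dual-solution-principal-structure} (with $\kappa^\dagger$ near $\frac12-s$) together with symbolic behavior at infinity, and then apply Lemma~\ref{le: Sobolev spaces}. The differences are only bookkeeping: the paper splits the remainder into a compactly supported conormal piece and a symbol $k_\infty$ with the sharper weight $\frac12+2s-\e$, while you use a partition of unity and a crude weight, which suffices since the statement only asks for some $M$.
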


\begin{proof}
By the basic properties of elements of the kernel of $\A^\dagger$ and using Lemma~\ref{lemma:dual-solution-principal-structure}
it follows that 
\[
	\mathfrak u(r)
		= b e^{-(r-r_+)}\chi_+^\a(r-r_+) + \tilde u(r)+k_\infty(r)
              \]
              where on the right hand side the third term $k_\infty$ is a symbol supported in $r>r_+$, lying
              in $\bigcap_{\e>0}H_*^{\infty,1/2+2s-\e}$, encoding the
              asymptotic behavior of $\mathfrak u$ at infinity and
              having no local singularity (in particular at $r_+$),
              the first term is trivial (Schwartz) at $r=\infty$ and
              encodes the leading local singularity, while the second
              term $\tilde u$ is compactly supported in $r\geq r_+$,
              conormal to $r=r_+$,
              and encodes the subleading singularity of $\mathfrak u$
              at $r=r_+$, so after translation of the local
              singularity to $0$, $\tilde u(.+r_+)\in\bigcap_{\e>0} H_*^{1/2-s-\e+1,\infty}$.
With $x := r - r_+$, we can compare the Fourier transforms as
\[
	\F_r(u)(\xi) 
		= \int_\R e^{-i r \xi} u(r) \md r
		= \int_\R e^{-i (x + r_+) \xi} u(x + r_+) \md x
		= e^{-ir_+ \xi} \F_x(u(\cdot + r_+))(\xi).
\]
Using Lemma~\ref{le: Fourier transform of homogen}, we can therefore compute the Fourier transform of the first term in $\mathfrak u$ to be
\begin{align*}
	\F_r \left( be^{-(r-r_+)}\chi _+^\a (r-r_+)\right)
		= & \ be^{-ir_+ \xi} \F_x\left( e^{-x}\chi_+^\a(x) \right)(\xi) \\
		= & \ be^{-ir_+ \xi}  e^{-i(\a+1)\pi/2} \left( \xi - i \right)^{-\a-1}.
\end{align*}
The second and third term lie respectively in $\bigcap_{\e>0} e^{-ir_+
  \xi} H_*^{\infty,3/2-s-\e}$ and
$\bigcap_{\e>0}H_*^{1/2+2s-\e,\infty}$; recall that $\Re \a=-s$.
\end{proof}

{\em In case we do not separate variables}, Lemma~6.1 of the
radiation field paper of Baskin, Vasy and Wunsch \cite{BVW2015} is
still applicable, and the only change to \eqref{eq:horizon-decomp} is
that $g_\pm$ are now complex valued functions on the sphere. In the
proof of
Lemma~\ref{lemma:dual-solution-principal-structure}, in whose
statement $b$ becomes a complex valued function on the sphere, one still uses a
pseudodifferential operator as stated there that preserves supports;
this can be done as in \cite{H2017}*{Appendix~B} (replacing the
cross-section $\R^{n-1}$ by $\Sph^2$, by e.g. using local
coordinates). For the global Fourier transform we can work with the
stated spaces with values in $C^\infty$ functions on the sphere, and
all of the arguments then go through.

\subsection{Kerr-de Sitter changes}
The statement and proof of
Lemma~\ref{lemma:dual-solution-principal-structure} only change in
notation. For the dual solution, in the kernel of
$$
\mathcal
P=\A^\dagger=e^{-H(r)}\P e^{H(r)},
$$
using $H$ as defined in Section~\ref{sec:KdS-Fredholm} (as
$\P^\dagger=\P$ still holds), we can apply the results of Baskin, Vasy
and Wunsch \cite{BVW2015} as above. Thus, with (as $s=0$)
$$
\nu(r)=b((r^2+a^2)\s+ak)f(r),\ \nu_{e/c}=\nu(r_{e/c}),
$$
(really $\nu=-i\mu H'$)
$x=r-r_e$ or\footnote{Technically in the second case at first we take
  $x=r-r_c$, which gives the statement below with that $x$, but then
  redefine $x$ to be its own negative, which effectively switches the
  role of the two terms and multiplies $g_\pm$ by a nonzero constant.} $x=r_c-r$
\eqref{eq:horizon-decomp} becomes near $r_c$ or $r_e$:
\begin{equation}
u=g_+x_{+i0}^{-i \frac{2\nu_{e/c}}{\mu'(r_{e/c})}}+g_-x_{-i0}^{-i
  \frac{2\nu_{e/c}}{\mu'(r_{e/c})}}+\tilde u,
\end{equation}
with $g_\pm$, just as $x$, depending on the choice of $e/c$. This
gives that Lemma~\ref{lemma:dual-solution-principal-structure} becomes
\begin{equation}\label{eq:KdS-dual-solution-principal-structure}
u=b_{e/c}\chi_+^{-i\frac{2\nu_{e/c}}{\mu'(r_{e/c})}}(x)+\tilde u
\end{equation}
locally near $r_{e/c}$.

\section{An alternative construction of the dual solution}\label{sec:alternative-construction}
 In this section we give an alternative way of constructing the
(bilinear) dual
solution, at first under a non-integrality condition but then removing
the condition; this gives additional insight into the structure of
dual solutions. The dual solutions are
constructed from the solutions of the direct problem via an
appropriate singular multiplication. For self-adjoint problems the (bilinear)
dual solution would be the complex conjugate of the direct
solution. The present problem, at the horizon, relates to the local
behavior of a self-adjoint problem via a conjugation (as well as a
change of the smooth structure and a division), hence such a
conjugation can be expected to show up in the arguments below.

The distributions $x_+^a$ come in naturally when constructing the
\emph{dual solutions} to linear ODE with a regular singular point. We
state the result for general ODEs, and then at the end of the section
we employ it in our particular setting.

\begin{prop}[The intertwining property] \label{prop:intertwining}
Let $0 \in I \subseteq \R$ be an open interval.
Consider the ordinary differential operator
\[
	Q
		:= \d_x \a(x) \d_x + \b(x)\d_x + \d_x \b(x) + \gamma (x),
\]
where $\a, \b, \gamma: I \to \C$ are smooth functions, and $\a(0) = 0, \a'(0) \neq 0$.
We assume that 
\begin{equation}\label{eq:adjoint-non-integrality-ODE}
	\frac{\b(0)}{\a'(0)} \notin \frac12 \N_+.
\end{equation}
Let $Q^\dagger$ denote the transpose operator of $Q$, with respect to the bilinear pairing $\ldr{u, v} := \int_I u(x) v(x) \md x$, i.e.~
\[
	Q^\dagger
		:= \d_x \a(x) \d_x - \left(\b(x)\d_x + \d_x \b(x)\right) + \gamma (x).
\]
Choose a smooth function $f : I \to \C$ satisfying
\[
	f'(x)
		= - 2 \frac{\b(x)}{\a(x)} + 2 \frac{\b(0)}{x \a'(0)},
\]
which extends smoothly to $x = 0$.
Assume that $w: I \to \C$ is a smooth function.
Define
\[
	\mathfrak w
		:= w(x) e^{f(x)} x_+^{-2\frac{\b(0)}{\a'(0)}}.
\]
Then $\mathfrak w$ is a distribution in $I$ with $\supp(\mathfrak w) \subseteq [0, \infty) \cap I$, which is given by $e^{h(x)} w(x)$ for $x > 0$, for a smooth function $h$ satisfying $h'(x) = - 2 \frac{\b(x)}{\a(x)}$, and vanishes for $x < 0$, and 
\begin{equation} \label{eq: Q-Q-dagger-intertwine}
	Q \mathfrak w
		= e^{f(x)} x_+^{-2\frac{\b(0)}{\a'(0)}}Q^\dagger w.
\end{equation}
\end{prop}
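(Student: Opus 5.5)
The plan is to verify the identity \eqref{eq: Q-Q-dagger-intertwine} first on $x>0$ by a direct conjugation computation, and then to promote it to all of $I$ by analytic continuation in the exponent.

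Set $c:=-2\b(0)/\a'(0)$ and $h:=f+c\log x$ on $x>0$, so that $h'=-2\b/\a$. The support and $x>0$ statements are immediate: $x_+^c$ is a well-defined distribution for $c\notin-\N_+$, which is exactly \eqref{eq:adjoint-non-integrality-ODE}, and it is supported in $[0,\infty)$. Multiplying it by the smooth function $we^f$ gives $\mathfrak w$, which equals $e^{h}w$ for $x>0$.

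\textbf{Step 1: the identity on $x>0$.} Here $Q e^{h} = e^{h}\tilde Q$, where $\tilde Q$ is obtained by replacing $\d_x$ with $\d_x+h'$. Expanding,
\[
\tilde Q=\d_x\a\d_x+\a h'\d_x+\d_x\a h'+\a h'^2+\b\d_x+\d_x\b+2\b h'+\gamma.
\]
Inserting $\a h'=-2\b$ gives $\a h'^2=-2\b h'$. This cancels $+2\b h'$, and the first-order terms become $-(\b\d_x+\d_x\b)$. Hence $\tilde Q=Q^\dagger$, i.e.\ $Q(e^hw)=e^hQ^\dagger w$ on $x>0$. Both sides of \eqref{eq: Q-Q-dagger-intertwine} also vanish on $x<0$.

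\textbf{Step 2: extension across $x=0$.} Replace $c$ by a complex parameter $z$ and consider
\[
F(z):=Q\big(we^{f}x_+^{z}\big)-e^{f}\,x_+^{z}\,Q^\dagger w \;+\;(\text{correction}).
\]
The identity does not hold for general $z$. Since $f$ depends on $c$, I will instead redo the computation with a general exponent. On $x>0$, conjugating by $x^z e^f$ produces $\d_x+f'+z/x$, and the discrepancy from $Q^\dagger$ collects terms of the form $(z-c)\,\a'(0)\cdot(\dots)/x$ times smooth functions of $x$.

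Concretely, I will write $Q(x_+^z e^f w)-x_+^z e^fQ^\dagger w = (z-c)\,G_z$. Here $G_z$ is a combination of $x_+^{z-1}$ and $x_+^{z}$ times smooth coefficients (note that $\a/x$ is smooth), possibly with $x_+^{z-1}$ multiplied by $\a$, which only reduces to $x_+^{z}$. Both sides are meromorphic in $z$ as distributions, with poles only at $-\N_+$. They agree as distributions for $\Re z$ large, because then all terms are locally integrable functions and the classical computation is valid without boundary terms; in particular $\a x_+^{z-1}\cdot x_+^{z}$-type contributions have no delta masses.

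By analytic continuation the equality of meromorphic families holds for all $z\notin-\N_+\cup(-\N_+ +1)$. At $z=c$ the factor $(z-c)$ kills $G_z$, provided $G_z$ is regular at $z=c$. That requires $c-1\notin-\N_+$, i.e.\ $c\notin-\N_0$, which is slightly more than \eqref{eq:adjoint-non-integrality-ODE} guarantees.

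\textbf{Main obstacle: the case $c=0$.} The exceptional case is $c=0$, i.e.\ $\b(0)=0$, and it needs separate treatment. There $x_+^0=H(x)$, and I will check directly that the delta terms cancel. The jump term from $\d_x\a\d_x$ is $\a(0)(\dots)\delta=0$. The jump term from $\b\d_x+\d_x\b$ is $2\b(0)w(0)\delta=0$, and it sits against $\d_x\a\,\delta$-type terms that vanish because $\a(0)=0$; the remaining $\a\delta'$ term equals $-\a'(0)\delta$, which is paired with $\a'(0)$-contributions from $\d_x(\a \d_x H)$.

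Alternatively, and more cleanly, I would handle $x_+^{z-1}$ at $z=c$ via the residue: near a pole $z_0=-k$, $G_z$ has at most a simple pole, which is cancelled by $(z-c)$ only if $c=z_0$. So in the general case I must show that the residue of $G_z$ vanishes. This residue is a multiple of $\delta^{(k-1)}$ times a coefficient computable from $\a'(0)z+\dots$, and it should carry the factor $(z-c)$ already. This bookkeeping of delta terms at the integer exponents is the step I expect to be most delicate.
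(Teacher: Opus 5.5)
Your argument is correct, but it takes a different route from the paper's. Write $c=-2\b(0)/\a'(0)$.

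\textbf{How the paper argues.} It does the same conjugation computation on $x>0$. It then observes that the discrepancy $Q\mathfrak w-e^fx_+^{c}Q^\dagger w$ is supported at $\{0\}$, hence equals a finite sum $\sum c_j\delta^{(j)}$. It kills the $c_j$ by homogeneity:
\begin{enumerate}
\item after Taylor expanding $\a,\b,\gamma,we^f,e^fQ^\dagger w$ to high order, the left side is, modulo smoother terms, a sum of homogeneous distributions of degrees $c-1,c,\dots,c+2N$;
\item the degree $c-1$ part vanishes by the indicial identity $(\a'(0)\d_x x\d_x+2\b(0)\d_x)x_+^c=0$;
\item the remaining degrees are never negative integers, whereas $\delta^{(j)}$ has degree $-(j+1)$.
\end{enumerate}

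\textbf{How you argue.} You keep $f$ fixed, vary only the exponent $z$, and write the discrepancy explicitly as $(z-c)G_z$, continuing analytically from $\Re z\gg0$. This avoids the structure theorem for point-supported distributions and the Taylor/Sobolev bookkeeping, and it gives an exact formula for the error. The paper's route needs only the indicial (leading homogeneous) part of $Q$, not an exact error formula. That is the form it reuses later in the non-separated setting near the bifurcate sphere.

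\textbf{There is no exceptional case at $c=0$.} Your treatment of $c=0$ is muddled, but it is not actually needed. Computing the discrepancy as $e^{h}[Q^\dagger,x^{z-c}]w$ on $x>0$ gives
\[
G_z=x_+^{z}e^{f}\Big(\d_x\big(\tfrac{\a}{x}w\big)+\tfrac{\a}{x}\d_x w\Big)+x_+^{z-1}e^{f}\Big((z-c)\tfrac{\a}{x}-2\b\Big)w .
\]
The coefficient of $x_+^{z-1}$ equals $z\,\a'(0)e^{f(0)}w(0)+xk_z$, with $k_z$ smooth and polynomial in $z$. This is because $(z-c)\a'(0)-2\b(0)=z\a'(0)$. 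Since $zx_+^{z-1}=\d_x x_+^z$, $G_z$ is holomorphic on $\C\setminus(-\N_+)$. Your argument therefore runs verbatim under exactly the stated hypothesis.

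Alternatively, at $c=0$ one has $h=f$, so $Q(e^fw)=e^fQ^\dagger w$ on all of $I$. Then $Q(x_+^0F)=x_+^0QF$ for smooth $F$, because $\a(0)=\b(0)=0$.

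Your closing worry about residues of $G_z$ ``in the general case'' is also unnecessary. Poles at points $z_0\neq c$ play no role, since you only evaluate at $z=c$.

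\textbf{A bonus of your approach.} Divide by $\Gamma(z+1)$. The $x_+^{z-1}$ term becomes $\a'(0)e^{f(0)}w(0)\chi_+^{z-1}$ plus a smooth multiple of $\chi_+^z$. So the discrepancy for $\chi_+^z$ is $(z-c)$ times an entire family. This gives Proposition~\ref{prop:strong-intertwining} directly, without the paper's separate perturbation of $\b$.
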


An immediate corollary if $Q^\dagger w=0$ is the following:

\begin{cor}[The dual solution] \label{cor: adjoint solution}
  Let $I,Q,Q^\dagger,f$ be as in Proposition~\ref{prop:intertwining}, and suppose that \eqref{eq:adjoint-non-integrality-ODE}
  holds.
Assume that $w: I \to \C$ is a smooth function such that $Q^\dagger w = 0$.
Define
\[
	\mathfrak w
		:= w(x) e^{f(x)} x_+^{-2\frac{\b(0)}{\a'(0)}}.
\]
Then $\mathfrak w$ is a distribution in $I$ with $\supp(\mathfrak w) \subseteq [0, \infty) \cap I$, which is given by $e^{h(x)} w(x)$ for $x > 0$, for a smooth function $h$ satisfying $h'(x) = - 2 \frac{\b(x)}{\a(x)}$, and vanishes for $x < 0$, and 
\begin{equation} \label{eq: Q u distributions special}
	Q \mathfrak w
		= 0.
\end{equation}
\end{cor}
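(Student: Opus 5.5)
This is immediate from Proposition~\ref{prop:intertwining}, applied to the given $w$. All hypotheses of the proposition hold: $I,Q,Q^\dagger,f$ are as there, \eqref{eq:adjoint-non-integrality-ODE} is assumed, and $w$ is smooth on $I$. The proposition therefore gives three facts directly. First, $\mathfrak w$ is a well-defined distribution on $I$ supported in $[0,\infty)\cap I$. Second, $\mathfrak w$ equals $e^{h(x)}w(x)$ for $x>0$, with $h'=-2\b/\a$. Third, the intertwining identity \eqref{eq: Q-Q-dagger-intertwine} holds.

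The only remaining step is to insert $Q^\dagger w=0$ into the right-hand side of \eqref{eq: Q-Q-dagger-intertwine}. This gives
$$
Q\mathfrak w=e^{f}x_+^{-2\frac{\b(0)}{\a'(0)}}\cdot 0=0.
$$
This product is meaningful: $x_+^{-2\b(0)/\a'(0)}$ is a distribution, since the non-integrality assumption excludes the negative integer exponents where $x_+^a$ is undefined. Multiplying a distribution by the smooth function $e^{f}Q^\dagger w$, which here is identically zero, gives the zero distribution. That is exactly \eqref{eq: Q u distributions special}.

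There is no real obstacle here. All the analytic content, namely justifying the multiplication of the singular distribution by smooth functions and checking that the derivatives in $Q$ interact correctly with $x_+^a$ at $x=0$, is contained in Proposition~\ref{prop:intertwining}. The corollary only specializes that proposition to the case $Q^\dagger w=0$.
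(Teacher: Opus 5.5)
Your proposal is correct and matches the paper, which presents the corollary as an immediate consequence of Proposition~\ref{prop:intertwining}, obtained by substituting $Q^\dagger w=0$ into \eqref{eq: Q-Q-dagger-intertwine}. Your remark that the non-integrality condition \eqref{eq:adjoint-non-integrality-ODE} excludes the negative integer exponents, so that $x_+^{-2\b(0)/\a'(0)}$ is well defined, is also accurate.
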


The argument will rely on the following remark.

\begin{remark}[Homogeneity property] \label{rmk: homogeneity}
Define the scaling operator
\[
	M_s f(x)
		:= f(s x),
\]
for any $x \in \R$ and $s > 0$.
We say that a continuous function $f: \R \to \C$ is homogeneous of degree $\lambda \in \C$ if $M_s f(x) = s^\lambda f(x)$ for all $x \in \R$ and $s > 0$.
Since
\[
	\int_\R M_s f(x) \phi(x) \md x
		= \frac1s \int_\R f(x) M_{s^{-1}} \phi(x) \md x,
\]
for any $\phi \in C_c^\infty$, the scaling operator, and the
definition of homogeneity, extends to distributions by the formula
\[
	M_s u[\phi]
		:= \frac1s u[M_{s^{-1}}\phi].
\]
Note that the distributions $x_+^a$ are homogeneous of degree $a \in \C$.
For any $k \in \N_0$, the $k$-th derivative of the Dirac distribution,
$\de^{(k)}$, is homogeneous of degree $-(k+1)$. Note also that
differentiating the homogeneity condition $M_s u=s^\lambda u$ with
respect to $s$ and evaluating at $s=1$ gives
$$
(x\d_x -\lambda)u=0
$$
since
\begin{equation*}\begin{aligned}
(s\frac{d}{ds}M_s u)[\phi]&=-\frac1s u[M_{s^{-1}}\phi]+\frac1s
u[s\frac{d}{ds}M_{s^{-1}}\phi]
=-\frac1s u[M_{s^{-1}}\phi]-\frac1s u[x\mapsto (x\d_x\phi)(s^{-1}x)]\\
&=-\frac1s u[M_{s^{-1}}\phi]-\frac1s u[M_{s^{-1}}(x\mapsto
(x\d_x\phi))]
=-M_su[\phi]-M_s u[(x\mapsto
(x\d_x\phi))]\\
&=-M_su[\phi]+(\d_x xM_s u)[\phi]=(x\d_x M_su)[\phi].
\end{aligned}\end{equation*}
\end{remark}

\begin{proof}[Proof of Proposition~\ref{prop:intertwining}]
Since $x_+^{-2\frac{\b(0)}{\a'(0)}}$ is supported in $x\geq 0$, both
sides of \eqref{eq: Q-Q-dagger-intertwine} are supported in $x\geq 0$,
and thus \eqref{eq: Q-Q-dagger-intertwine} is satisfied for $x < 0$.
Moreover, for $x > 0$, then $x_+^{-2\frac{\b(0)}{\a'(0)}} = x^{-2\frac{\b(0)}{\a'(0)}}$, and hence $\mathfrak w = w(x) e^{h(x)}$, with
\[
	h'(x)
		= \frac{\md}{\md x} \left( f(x) - \frac{2 \b(0)}{\a'(0)}\ln(x) \right)
		= - 2 \frac{\b(x)}{\a(x)},
\]
as claimed.
Hence \eqref{eq: Q-Q-dagger-intertwine} is satisfied for $x > 0$ by the same computation as in the proof of Lemma~\ref{le: conjugation}.
We thus conclude that
$$
Q \mathfrak w-e^{f(x)} x_+^{-2\frac{\b(0)}{\a'(0)}}Q^\dagger w
$$
is a distribution with support at $x = 0$.
Hence it is a linear combination of derivatives of the Dirac distibution, see e.g.~\cite{H2012}*{Thm.~2.3.4}, i.e.~
\begin{equation}\begin{aligned} \label{eq: alpha beta gamma}
	&\left( \d_x \a(x) \d_x + \b(x)\d_x + \d_x \b(x) + \gamma (x) \right) \mathfrak w-e^{f(x)} x_+^{-2\frac{\b(0)}{\a'(0)}}Q^\dagger w\\
		&\qquad= Q \mathfrak w-e^{f(x)} x_+^{-2\frac{\b(0)}{\a'(0)}}Q^\dagger w
		= \sum_{j = 0}^m c_j \de^{(j)},
\end{aligned}\end{equation}
for some $m \in \N_0$ and $c_1, \hdots, c_m \in \C$.
Therefore the two sides of \eqref{eq: alpha beta gamma} are not in $H^{0, \infty}_*(\R)$.
We also know by Lemma~\ref{le: Fourier transform of homogen} that $\psi \mathfrak w \in H_*^{- 2 \Re\left(\frac{\b(0)}{\a'(0)} \right) + \frac12 - \epsilon, \infty}(\R)$ for any $\psi \in C_c^\infty(I)$ and any $\epsilon > 0$.
By Taylor's theorem, we write
\[
	\a(x) = \sum_{j = 1}^N c^\a_j x^j + \a_N(x), \quad \b(x) = \sum_{j = 0}^N c^\b_j x^j + \b_N(x),
\]
\[
	\gamma(x) = \sum_{j = 0}^N c^\gamma_j x^j + \gamma_N(x), \quad w(x) e^{f(x)} = \sum_{j = 0}^N d_j x^j + w_N(x),
      \]
      \[
e^{f(x)}(Q^\dagger w)(x)= \sum_{j = 0}^N e_j x^j + e_N(x),
        \]
for a large $N \in \N$, where $\a_N, \b_N, \gamma_N, w_N,e_N$ vanish to order $N$ at $x = 0$.
We may choose $N$ so large that any term on the left-hand side of \eqref{eq: alpha beta gamma} involving either of $\a_N, \b_N, \gamma_N, w_N,e_N$ is in $H^{0, \infty}_*(\R)$.
Since the right-hand side of \eqref{eq: alpha beta gamma} is not contained in $H^{0, \infty}_*(\R)$, the problem can be reduced to studying
\begin{align*}
	\Bigg( 
		& \d_x \sum_{j = 1}^N c^\a_j x^j \d_x + \sum_{j = 0}^N
           c^\b_j x^j \d_x + \d_x \sum_{j = 0}^N c^\b_j x^j + \sum_{j
           = 0}^N c^\gamma_j x^j \Bigg) \left( \sum_{j = 0}^N d_j x^j
           x_+^{- \frac{2\b(0)}{\a'(0)}} \right)\\
  &\qquad-\sum_{j = 0}^N e_j x^j x_+^{- \frac{2\b(0)}{\a'(0)}} , \\
		= & \ \sum_{j = 0}^m c_j \de^{(j)} + H_*^{0, \infty}(\R).
\end{align*}
By Remark~\ref{rmk: homogeneity}, expanding the left hand side, every term on it is homogeneous of order
\[
	-\frac{2 \b(0)}{\a'(0)} - 1, -\frac{2 \b(0)}{\a'(0)}, \hdots, -\frac{2 \b(0)}{\a'(0)} + 2N.
      \]
      In fact, the situation is even better, for the term with
      homogeneity of $-\frac{2 \b(0)}{\a'(0)} - 1$ in it can only
      arise from the expression on the first line and only by
      taking the $j=1$ summand in the first term, the $j=0$ in the
      second and third terms of the first factor (and no term in the
      last term of the first factor) and the $j=0$ in the second
      factor, but this gives
      $$
      \left( \d_x x\a'(0) \d_x + \b(0)\d_x + \d_x \b(0) \right) x_+^{- \frac{2\b(0)}{\a'(0)}} =0,
      $$
      so the collection of orders to consider is reduced to
      \[
	-\frac{2 \b(0)}{\a'(0)} , -\frac{2 \b(0)}{\a'(0)}+1, \hdots, -\frac{2 \b(0)}{\a'(0)} + 2N.
      \]
None of these orders is a negative integer by the assumption that $\frac{\b(0)}{\a'(0)} \notin \frac12 \N_+$.
This is therefore a contradiction to the homogeneity degrees of the
Dirac distributions (c.f.~Remark~\ref{rmk: homogeneity}) unless $c_1,
\hdots, c_m = 0$. (Explicitly, one can apply a product of first order
operators annihilating the terms on the left hand side, but these give
elliptic multiples of the differentiated delta distributions by the
non-integrality hypothesis, hence the $c_j$ vanish.)
\end{proof}

We can now strengthen Proposition~\ref{prop:intertwining} by replacing
the $x_+^a$ distributions with the $\chi_+^a$; the key difference is
that the $\chi_+^a$ are continuous, and indeed even analytic, in $a$,
with values in dsitrbutions, even for the negative integers $a$ (when
they are differentiated delta distributions). We state this as a
proposition:

\begin{prop}[The strong intertwining property] \label{prop:strong-intertwining}
Let $0 \in I \subseteq \R$ be an open interval.
Consider the ordinary differential operator
\[
	Q
		:= \d_x \a(x) \d_x + \b(x)\d_x + \d_x \b(x) + \gamma (x),
\]
where $\a, \b, \gamma: I \to \C$ are smooth functions, and $\a(0) = 0, \a'(0) \neq 0$.
Let $Q^\dagger$ denote the transpose operator of $Q$, with respect to the bilinear pairing $\ldr{u, v} := \int_I u(x) v(x) \md x$, i.e.~
\[
	Q^\dagger
		:= \d_x \a(x) \d_x - \left(\b(x)\d_x + \d_x \b(x)\right) + \gamma (x).
\]
Choose a smooth function $f : I \to \C$ satisfying
\[
	f'(x)
		= - 2 \frac{\b(x)}{\a(x)} + 2 \frac{\b(0)}{x \a'(0)},
\]
which extend smoothly to $x = 0$.
Assume that $w: I \to \C$ is a smooth function.
Define
\[
	\mathfrak w
		:= w(x) e^{f(x)} \chi_+^{-2\frac{\b(0)}{\a'(0)}}.
\]
Then $\mathfrak w$ is a distribution in $I$ with $\supp(\mathfrak w) \subseteq [0, \infty) \cap I$, which is given by $e^{h(x)} w(x)$ for $x > 0$, for a smooth function $h$ satisfying $h'(x) = - 2 \frac{\b(x)}{\a(x)}$, and vanishes for $x < 0$, and 
\begin{equation} \label{eq:gen-Q-Q-dagger-intertwine}
	Q \mathfrak w
		= e^{f(x)} \chi_+^{-2\frac{\b(0)}{\a'(0)}}Q^\dagger w.
\end{equation}
\end{prop}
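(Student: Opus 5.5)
The plan is to deduce the statement from Proposition~\ref{prop:intertwining} by analytic continuation in the exponent, using that $a\mapsto\chi_+^a$ is an entire family of distributions. To set this up, introduce a complex parameter $z$ and consider the modified operator
\[
Q_z:=\d_x\a(x)\d_x+\b_z(x)\d_x+\d_x\b_z(x)+\gamma(x),\qquad \b_z(x):=\b(x)+\tfrac{z}{2}\a'(0)\chi(x),
\]
with $\chi\in C_c^\infty(I)$ equal to $1$ near $0$. Its transpose is $Q_z^\dagger$. Set $f_z$ to be the corresponding function, with
\[
f_z'=-2\b_z/\a+2\b_z(0)/(x\a'(0)),
\]
normalized so that $f_z(x_0)=f(x_0)$ at a fixed point. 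Since $\b_z(0)/\a'(0)=\b(0)/\a'(0)+z/2$, the exponent $a(z):=-2\b(0)/\a'(0)-z$ is affine in $z$. Also $f_z'$ depends affinely on $z$ with smooth coefficients, because $-z\a'(0)\chi/\a+z/x$ is smooth near $0$ as $\a(x)=\a'(0)x+O(x^2)$. Hence $e^{f_z}$ is entire in $z$ with values in $C^\infty(I)$.

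For $z$ outside the discrete set where $\b_z(0)/\a'(0)\in\frac12\N_+$, Proposition~\ref{prop:intertwining} applies. Multiplying its identity by the nonzero entire factor $1/\Gamma(a(z)+1)$, defined away from the poles, gives
\[
Q_z\big(w e^{f_z}\chi_+^{a(z)}\big)=e^{f_z}\chi_+^{a(z)}Q_z^\dagger w
\]
for all $z$ off a discrete set. Both sides are entire functions of $z$ valued in $\mathcal D'(I)$. This holds because $\chi_+^{a}$ is entire in $a$, and because multiplication by smooth functions that depend holomorphically on $z$, and differentiation, preserve holomorphy. The coefficients of $Q_z$ and $Q_z^\dagger$ are polynomial in $z$. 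By the identity theorem, applied after pairing with a fixed test function, the equality holds for all $z$, in particular at $z=0$. There $Q_0=Q$, $f_0=f$, and $a(0)=-2\b(0)/\a'(0)$, which is \eqref{eq:gen-Q-Q-dagger-intertwine}. If the hypothesis fails at $z=0$, this is exactly the case the proposition newly covers.

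The support statement and the formula $e^{h}w$ for $x>0$ are direct. For $x>0$ we have $\chi_+^a=x^a/\Gamma(a+1)$, so exactly as in Proposition~\ref{prop:intertwining} one gets $h=f+a\log x-\log\Gamma(a+1)$. When $1/\Gamma(a+1)=0$, i.e. $a\in-\N_+$, the distribution is supported at $0$ and vanishes on $x>0$. This is consistent with $w e^{h}$ only in the degenerate sense, and I would treat the statement there as meaning the restriction to $x>0$ is the stated smooth function times the constant $1/\Gamma(a+1)$.

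The main point to check carefully is the holomorphy of $z\mapsto w e^{f_z}\chi_+^{a(z)}$ and of $Q_z$ applied to it. Holomorphy of products with smooth families is routine. Alternatively, one can avoid modifying $\b$: replace $\b(0)$ only in the exponent and absorb the discrepancy into the explicit commutator identity. I expect the deformation above to be cleanest.
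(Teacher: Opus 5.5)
Your proposal is correct and is essentially the paper's argument. You deform $\beta$ by a parameter $z$ so that the non-integrality condition \eqref{eq:adjoint-non-integrality-ODE} holds for small $z\neq 0$, apply Proposition~\ref{prop:intertwining} there (where $\chi_+^a$ and $x_+^a$ differ by the finite non-zero factor $\Gamma(a+1)$), and pass to $z=0$ using that $a\mapsto\chi_+^a$ is continuous, indeed entire, in the distributional topology; your cutoff in $\beta_z$ (the paper just takes $\beta_z=\beta+z$) and your use of the identity theorem instead of plain continuity are harmless variations, and your remark that for $a\in-\N_+$ the restriction to $x>0$ is really $e^{f}w\,x^a/\Gamma(a+1)=0$ correctly flags an imprecision in the ``$e^{h}w$'' clause of the statement.
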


\begin{proof}
Under the condition \eqref{eq:adjoint-non-integrality-ODE}, i.e.\
$\frac{\b(0)}{\a'(0)} \notin \frac12 \N_+$, this is the content of
Proposition~\ref{prop:intertwining} since
$\chi_+^{-2\frac{\b(0)}{\a'(0)}}$ from $x_+^{-2\frac{\b(0)}{\a'(0)}}$
by a non-singular, non-vanishing $\Gamma(-2\frac{\b(0)}{\a'(0)}+1)$ factor. To extend the result to the
remaining case, consider the family of operators $Q_z$ depending on a
parameter $z$, given by $\a,\gamma$ independent of $z$, and
$\b_z=\b+z$. Then for $|z|$ small and non-zero,
\eqref{eq:adjoint-non-integrality-ODE} is satisfied for $Q=Q_z$, thus
\eqref{eq:gen-Q-Q-dagger-intertwine} holds then. But both sides of
\eqref{eq:gen-Q-Q-dagger-intertwine} are continuous (in $z$) in the
distributional topology, i.e.\ for $\phi\in C^\infty_c(I)$, applying both sides
to $\phi$, they are both continuous complex-valued functions, so the
equality for $z=0$ also follows.
  \end{proof}

  We again have an immediate corollary:
  
\begin{cor}[The dual solution] \label{cor:gen-adjoint-solution}
  Let $I,Q,Q^\dagger,f$ be as in Proposition~\ref{prop:strong-intertwining}.
Assume that $w: I \to \C$ is a smooth function such that $Q^\dagger w = 0$.
Define
\[
	\mathfrak w
		:= w(x) e^{f(x)} \chi_+^{-2\frac{\b(0)}{\a'(0)}}.
\]
Then $\mathfrak w$ is a distribution in $I$ with $\supp(\mathfrak w) \subseteq [0, \infty) \cap I$, which is given by $e^{h(x)} w(x)$ for $x > 0$, for a smooth function $h$ satisfying $h'(x) = - 2 \frac{\b(x)}{\a(x)}$, and vanishes for $x < 0$, and 
\begin{equation} \label{eq: Q u distributions}
	Q \mathfrak w
		= 0.
\end{equation}
\end{cor}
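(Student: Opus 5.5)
This corollary is immediate from Proposition~\ref{prop:strong-intertwining}, and the plan is simply to specialize that proposition. Take the given smooth $w$ with $Q^\dagger w=0$ and form $\mathfrak w = w e^{f}\chi_+^{-2\b(0)/\a'(0)}$. Proposition~\ref{prop:strong-intertwining} then gives directly that $\mathfrak w$ is a distribution on $I$ supported in $[0,\infty)\cap I$. It vanishes for $x<0$, since $\chi_+^a$ is supported in $x\ge 0$ for every $a\in\C$. For $x>0$ it equals $e^{h}w$ with $h'=-2\b/\a$; this uses $\chi_+^a = x^a/\Gamma(a+1)$ there, and the constant $1/\Gamma(a+1)$ is absorbed into $h$.

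The main identity is \eqref{eq:gen-Q-Q-dagger-intertwine}:
\[
Q\mathfrak w = e^{f}\chi_+^{-2\b(0)/\a'(0)}\,Q^\dagger w .
\]
Substituting $Q^\dagger w=0$ makes the right-hand side a product of the distribution $\chi_+^{-2\b(0)/\a'(0)}$ with the smooth function $e^f\cdot 0=0$. That product is the zero distribution, so $Q\mathfrak w=0$, which is \eqref{eq: Q u distributions}.

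No non-integrality assumption is needed here. The case $\b(0)/\a'(0)\in\frac12\N_+$ was already handled inside Proposition~\ref{prop:strong-intertwining} by continuity in the auxiliary parameter $z$. The only point worth flagging concerns that degenerate case, where $\chi_+^a$ is a differentiated delta. There the description of $\mathfrak w$ on $x>0$ becomes degenerate, because $1/\Gamma(a+1)=0$ and so $\mathfrak w$ vanishes for $x>0$. One should read the statement accordingly: $\mathfrak w$ equals $e^{h}w$ up to that (possibly zero) constant. This does not affect the equation $Q\mathfrak w=0$. There is no genuine obstacle.
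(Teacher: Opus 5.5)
Your proposal is correct and matches the paper, which likewise treats the corollary as immediate: substitute $Q^\dagger w=0$ into the intertwining identity of Proposition~\ref{prop:strong-intertwining}. Your caveat is also accurate. When $-2\b(0)/\a'(0)\in-\N_+$, the factor $1/\Gamma$ vanishes, so $\mathfrak w$ is supported at $x=0$ and the description $e^{h}w$ on $x>0$ breaks down; this is consistent with Remark~\ref{remark:stronger-dual-structure}.
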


We finally apply Corollary~\ref{cor:gen-adjoint-solution} with
\begin{align*}
	Q
		= & \ \mathcal P=\A^\dagger, \\
	x
		= & \ r - r_+, \\
	\a(r - r_+)
		= & \ - \mu(r), \\
	\b(r - r_+)
		= & \ - \left( i \left( \left( r^2 + a^2 \right) \s + ak \right)  + (r - m)s \right).
\end{align*}
For this, we need to compute 
\begin{align*}
	\frac{\b(0)}{\a'(0)}
		= & \ \frac{- \left( i \left( \left( r_+^2 + a^2 \right) \s + ak \right)  + (r_+ - m)s \right)}{- \mu'(r_+)} \\
		= & \ \frac{ i \left( \left( r_+^2 + a^2 \right) \s + ak \right) }{2 \sqrt{m^2 - a^2}} + \frac s2;
\end{align*}
we also remark that by \eqref{eq: H},
$$
H'(r)=\frac{\b(r-r_+)}{\a(r-r_+)}.
$$
Define
\[
	v(r)
		: = e^{H(r)}u(r),
\]
which by assumption extends smoothly to $[r_+, \infty)$ and satisfies $\mathcal P^\dagger v = 0$.
Applying Corollary~\ref{cor: adjoint solution} with 
\begin{align*}
	w(r-r_+)
		:= & \ v(r), \\
	f(r - r_+)
		:= & \ -2H(r) + \left( \frac{ i \left( \left( r_+^2 + a^2 \right) \s + ak \right) }{\sqrt{m^2 - a^2}} + s \right) \ln(r-r_+),
\end{align*}
implies that
\[
	\mathfrak w(r - r_+)
		= v(r) e^{f(r-r_+)}(r-r_+)_+^{- \frac{ i \left( \left( r_+^2 + a^2 \right) \s + ak \right) }{\sqrt{m^2 - a^2}} - s}
		=: \mathfrak u(r)
\]
satisfies
\[
	\mathcal P \mathfrak u
		= 0,
\]
in the distributional sense on all of $\R$.
Note that we may choose the primitive function $H$ in \eqref{eq: H} so
that $f(0) = 0$. As
$$
f(r-r_+) - \left(\frac{ i \left( \left( r_+^2 + a^2 \right) \s + ak
    \right) }{\sqrt{m^2 - a^2}} - s\right) \ln(r-r_+)=-2H(r),\ r>r_+,
$$
we have $\mathfrak u=e^{-2H(r)}v=e^{-H(r)}u$; this also agrees with the conclusion
of Corollary~\ref{cor: adjoint solution} since $h'=-2H'$.

Since $v$ is smooth near $r=r_+$ and $f$ is smooth near $0$,
$\mathfrak u$ is conormal to $r=r_+$ and is supported in $r\geq
r_+$. Moreover, as $e^{-H(r)}u$ is conormal to $r=\infty$ by
assumption, $\mathfrak u$ is conormal to $r=\infty$.
Thus, $\mathfrak u$ is in the spaces as
required for the domain of $\A^\dagger$ (supported at the artificial
Cauchy hypersurface and sufficiently regular at $r=\infty$, $\xi_r=-2\sigma$), and indeed agrees with the structure of the dual
solution $u$
demonstrated in Lemma~\ref{lemma:dual-solution-principal-structure}.

\begin{remark}\label{remark:stronger-dual-structure}
A striking consequence of this result is that for
$-2\frac{\b(0)}{\a'(0)}$ a negative integer the dual states are
necessarily differentiated delta distributions supported at
$x=0$. This is much stronger than the conclusion of Lemma~\ref{lemma:dual-solution-principal-structure}, even
in the strengthened version where $b$ is allowed to be smooth and
$\tilde u$ is smooth, supported in $x\geq 0$, for it states that the
$\tilde u$ term vanishes identically, which is not clear from purely
microlocal arguments.

Another fact that is immediate from this approach is that
Lemma~\ref{lemma:dual-solution-principal-structure} can be
strengthened to:
    for some $b\in C^\infty$
  $$
u=b\chi_+^{-i \frac{2\nu_+}{\mu'(r_+)}}(x).
$$
In principle this could be deduced from
Lemma~\ref{lemma:dual-solution-principal-structure} by an iterative
argument, essentially determining the symbolic expansion of the
conormal distribution $u$ step by step, but the present approach makes
the conclusion immediate.
\end{remark}

{\em In case we do not separate variables, the arguments presented
here all go through by adding smooth dependence on the spherical
variables.}

\subsection{The Kerr-de Sitter case}\label{sec:KdS-construction}
Only the final part of the argument of this section is affected by
going to the Kerr-de Sitter case, and there the changes are
essentially notational. The final conclusion is that
\[
	\mathfrak u(r)
		= v(r) e^{f(r)} (r-r_e)_+^{- \frac{ 2i b\left( \left( r_e^2 + a^2 \right) \s + ak \right) }{\mu'(r_e)} }(r_c-r)_+^{\frac{ 2ib \left( \left( r_c^2 + a^2 \right) \s + ak \right) }{\mu'(r_c)} }
              \]
              with
              $$
	f(r)
		:= \ -2H(r) + \frac{ 2i b\left( \left( r_e^2 + a^2 \right) \s + ak \right) }{\mu'(r_e)} \ln(r-r_e)-\frac{2 i b\left( \left( r_c^2 + a^2 \right) \s + ak \right) }{\mu'(r_c)} \ln(r_c-r),
              $$
              provides the desired adjoint solution.

\section{Mode solutions on the extended Kerr spacetime}\label {sec:extended-Kerr}
In this section we discuss the precise behavior of mode solutions of
the wave equation on Kerr spacetime extended across the future and
past horizons as well as the bifurcate sphere for $s=0$. Recall from the end of
Section~\ref{sec:Fredholm} that mode
solutions are of the
form
$$
u=e^{-i\s t-ik\phi}u_0
$$
and can be written as
$$
u=e^{-i\s t_*-ik\phi_*} u_*,\qquad u_*=e^{-i\s\Phi(r)-ik\Psi(r)}u_0=e^{H(r)}u_0,
$$
with $u_*$ smoothly extending across $r=r_+$ as a solution of the
conjugated wave equation\footnote{I.e.\ the resulting extension of $u$ solves
the wave equation across $r=r_+$: as discussed at the end of
Section~\ref{sec:Fredholm}, $\A$ is $e^{H(r)}\mathrm L_0
        e^{-H(r)}$  in $r>r_+$ acting on the separated modes, with
        $e^{-i\s t-ik\phi}$ factored out, or better yet $\mathrm L_0$
        acting on separated modes with $e^{-i\s t_*-ik\phi_*}$
        factored out, with the latter description valid across the
        future event horizon.}; here 
\begin{equation}
t_*=t-\Phi(r),\ \phi_*=\phi-\Psi(r),
\end{equation}
and $\Phi,\Psi$ are specified via their derivatives:
\begin{equation}
\Phi'(r)=b\frac{r^2+a^2}{\mu(r)}f(r),\ \Psi'(r)=b\frac{a}{\mu(r)}f(r),
\end{equation}
and in our Kerr case $b=1$, and $f$ is constant $-1$. In the Kerr-de
Sitter case analogous arguments work, but then
$f$ is smooth on a neighborhood of $[r_e,r_c]$, $f(r_e)=-1$,
$f(r_c)=1$. Also recall from Section~\ref{sec:Fredholm}  that
\begin{equation}\label{eq:KdS-H-recall}
H=-i\s\Phi(r)-ik\Psi(r),\ H'=-i\s\Phi'-ik\Psi'.
\end{equation}
Note that, restricting to the Kerr case in notation,
$$
\Phi'=-\frac{r^2+a^2}{\mu}=-\frac{r_+^2+a^2}{\mu'(r_+)(r-r_+)}+\tilde\Phi'(r),
$$
and
$$
\Psi'=-\frac{a}{\mu}=-\frac{a}{\mu'(r_+)(r-r_+)}+\tilde\Psi',
$$
with $\tilde\Phi',\tilde\Psi'$ smooth across $r=r_+$.

As shown in Section~\ref{sec:alternative-construction}, when
$\alpha$ is not a negative integer\footnote{If $\alpha$ is a negative
  integer, the modes are supported on $r=r_+$, so the
  situation is rather different.} (which is satisfied in our case:
$\alpha$ is pure imaginary),
so $\chi_+^\alpha$ and $x_+^\alpha$ differ by a finite non-zero factor,
the adjoint
solutions $u_{**}$ in fact arise by
considering
$$
u_{**}=e^{-2H(r)}u_* =(r-r_+)^\alpha e^{2i\s\tilde\Phi+2ik\tilde\Psi}u_*,
$$
extending $u_{**}$ across $r=r_+$ as
$$
u_{**}=(r-r_+)^\alpha_+ e^{2i\s\tilde\Phi+2ik\tilde\Psi}u_*,
$$
which as shown in Section~\ref{sec:alternative-construction} solves
the adjoint conjugated equation: $\A^\dagger$ is $e^{-H(r)}\mathrm L_0
        e^{H(r)}$ acting on the separated modes, with $e^{-i\s
          t-ik\phi}$ factored out.

Turning to the spacetime (see Figure~\ref{fig:mode-extension-2}), the past version of our horizon adapted
coordinates are
\begin{equation}
t_{**}=t+\Phi(r),\ \phi_{**}=\phi+\Psi(r),
\end{equation}
with $\Phi,\Psi$
as above. Thus, $\A^\dagger$ is $\mathrm L_0$ acting on
        separated modes with $e^{-i\s
          t_{**}-ik\phi_{**}}$ factored out.
Hence, with
$$
 u_{**}=e^{2i\s\Phi(r)+2ik\Psi(r)}u_*=e^{i\s\Phi(r)+ik\Psi(r)}u_0
 $$
 in $r>r_+$,
we have
$$
u=e^{-i\s t_*-ik\phi_*} u_*,=e^{-i\s t_{**}-ik\phi_{**}}u_{**}.
$$
This expression, together with the above extension of $u_{**}$ as a
supported distribution, shows that $u$ extends across the past event horizon
as a supported
distribution. Moreover, as $\mathrm L_0$,
        extends smoothly across the past event
        horizon, this in particular states
        that the adjoint modes extend to solve the wave equation since
        the action of $\A^\dagger$ on modes even across the past event
        horizon is that of $\mathrm L_0$ with $e^{-i\s
          t_{**}-ik\phi_{**}}$ factored out.

\begin{figure}[ht]
\begin{center}
\includegraphics[width=65mm]{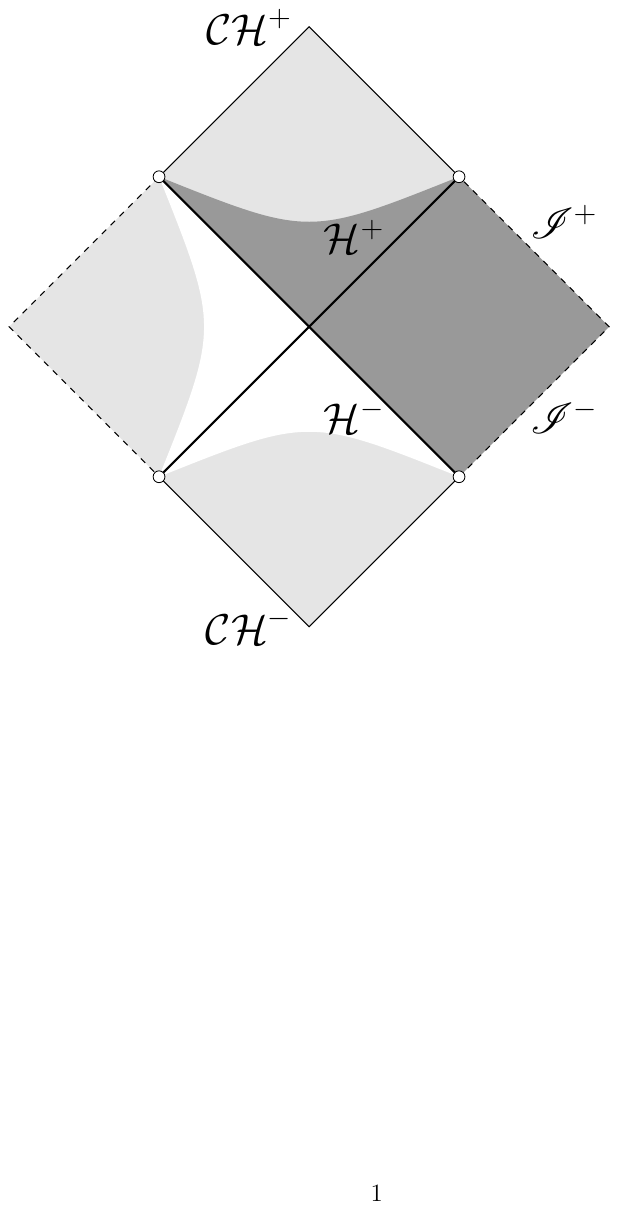}
\end{center}
\caption{The dark region, crossing the future event horizon $\cH^+$, is where the QNM is smooth. It extends to the
  white region, across the past event horizon $\cH^-$ as a
  distributional solution of the wave equation by 0. Note that in
  particular it is a distributional solution in a full neighborhood of
the bifurcate sphere.}
\label{fig:mode-extension-2}
\end{figure}

We want to now study the behavior at the
bifurcate sphere, at first taking $a=0$. Coordinates nearby are given
by the spherical coordinates,
and powers of $e^{-t_{**}}$,
$e^{t_*}$, namely (even if $a$ appears in the equations below for
comparison with the above computations, here we are taking $a=0$)
$$
x_{**}=e^{-\kappa t_{**}},\ x_*=e^{\kappa t_*},\qquad \kappa=\frac{\mu'(r_+)}{2(r_+^2+a^2)}.
$$
with these last two coordinates extended so they can become negative,
and they define the future ($x_{**}=0$), resp.\ past ($x_*=0$), event horizon.
Then in $r>r_+$
\begin{equation}\label {eq:r-t_*-t_**}
e^{-\kappa t_{**}} e^{\kappa
  t_*}=e^{\kappa(t_*-t_{**})}=e^{-2\kappa\tilde\Phi(r)}(r-r_+)^{\kappa \frac{2(r_+^2+a^2)}{\mu'(r_+)}}=e^{-2\kappa\tilde\Phi(r)}(r-r_+),
\end{equation}
with the first factor on the right hand side smooth across $r=r_+$,
which explains the choice of $\kappa$ as the power of $e^{-t_{**}}$,
$e^{t_*}$: $r-r_+$ is a smooth non-degenerate (positive) multiple of the defining
functions $e^{\kappa t_*}$ of the past and $e^{\kappa t_{**}}$ of the
future event horizons. In particular, when these two defining
functions are extended as the coordinates $x_*,x_{**}$ across the horizons, $r-r_+$ remains
a smooth function of these (as $r-r_+\mapsto
e^{-2\kappa\tilde\Phi(r)}(r-r_+)$ is a diffeomorphism near $0$).
Turning to $u$ again, we rewrite $u_{**}$ in an equivalent form along
the past event horizon where $t_{**}$ is finite using
\eqref{eq:r-t_*-t_**}, replacing $e^{\kappa t_*}$ by $x_*$ ($\cdot_+^\alpha$
stands for the $x^\alpha_+$ distribution):
\begin{equation*}\begin{aligned}
u_{**}&=(x_*) _+^\alpha e^{-\alpha\kappa t_{**}}  e^{2\alpha\kappa\tilde\Phi(r)}
e^{2i\s\tilde\Phi+2ik\tilde\Psi}u_*\\
&=(x_*) _+^\alpha e^{i\s t_{**}}u_*,
\end{aligned}\end{equation*}
and (recall $a=0$, so $\phi=\phi_*=\phi_{**}$)
$$
u=e^{-i\s t_{**}-ik\phi}(x_*) _+^\alpha e^{i\s t_{**}}u_*=e^{-ik\phi}(x_*) _+^\alpha u_*.
$$
In view of the above discussed coordinates at the bifurcate sphere, it
is immediate that $u$ extends to a neighborhood of the bifurcate
sphere as a distribution, supported in $x_*\geq 0$ (i.e.\ the continuation of the past event horizon as a
union of nullgeodesics), conormal at $x_*=0$ with the singularity being that of $(x_*)_+^\alpha$.

In fact, it is not hard to deal with the $a\neq 0$ case either. For
this, one needs to change to new coordinates (see \cite{PV23} for
similar considerations)
$$
t_*'=t_*,\ \phi_*'=\phi_*-\frac{a}{r_+^2+a^2}t_*,
$$
and similarly
$$
t_{**}'=t_{**},\ \phi_{**}'=\phi_{**}-\frac{a}{r_+^2+a^2}t_{**}.
$$
Then
$$
e^{-i\s t_*-ik\phi_*}=e^{-i(\s+k\frac{a}{r_+^2+a^2})t_*'}e^{-ik\phi_*'},
  $$
  and
$$
e^{-i\s t_{**}-ik\phi_{**}}=e^{-i(\s+k\frac{a}{r_+^2+a^2})t_{**}'}e^{-ik\phi_{**}'},
$$
so the $\d_{t_*'}$ and $\d_{t'_{**}}$ mode becomes
$$
\s'=\s+k\frac{a}{r_+^2+a^2}
$$
as these vector fields are $\d_{t_*}+\frac{a}{r_+^2+a^2}\d_{\phi_*}=\d_{t_{**}}+\frac{a}{r_+^2+a^2}\d_{\phi_{**}}$.
Then
\begin{equation*}\begin{aligned}
    \phi_{**}'&=\phi_{**}-\frac{a}{r_+^2+a^2}t_{**}=\phi_*-\frac{2a}{\mu'(r_+)}\log(r-r_+)+2\tilde\Psi-\frac{a}{r_+^2+a^2}t_{**}\\
    &=\phi_*'+\frac{a}{r_+^2+a^2}t_*-\frac{2a}{\mu'(r_+)}\log(r-r_+)+2\tilde\Psi-\frac{a}{r_+^2+a^2}t_{**}\\
    &=\phi_*'+\frac{a}{r_+^2+a^2}(-2\Phi)-\frac{2a}{\mu'(r_+)}\log(r-r_+)+2\tilde\Psi\\
&=\phi_*'+2\frac{a}{\mu'(r_+)}\log(r-r_+)-2\frac{a}{r_+^2+a^2}\tilde\Phi
-\frac{2a}{\mu'(r_+)}\log(r-r_+)+2\tilde\Psi\\
&=\phi_*' -2\frac{a}{r_+^2+a^2}\tilde\Phi +2\tilde\Psi,
    \end{aligned}\end{equation*}
which shows that $\phi_*'$ and $\phi_*''$ are smoothly related so
either can be used in place of the other. Thus
$$
u=e^{-i\s' t_*'-ik\phi_*'} u_*=e^{-i\s' t_{**}'-ik\phi_{**}'}u_{**},
$$
with $u_*,u_{**}$ as before. Also, we have
$$
\alpha=-2i\s'\frac{r_+^2+a^2}{\mu'(r_+)}.
$$

Then the above calculations go through with $t_*$ replaced by $t_*'$,
etc., and corresponding new coordinates $x_*,x_{**}$.
Namely, using that \eqref{eq:r-t_*-t_**} holds with $t_*,t_{**}$
replaced by $t'_*,t'_{**}$, replacing $e^{\kappa
  t_*'}$ by $x_*'$,
\begin{equation*}\begin{aligned}
u_{**}&=(x_*') _+^\alpha e^{-\alpha\kappa t_{**}'}  e^{2\alpha\kappa\tilde\Phi(r)}
e^{2i\s\tilde\Phi+2ik\tilde\Psi}u_*\\
&=(x_*') _+^\alpha e^{i\s' t_{**}'}e^{-2i(\s'-\s)\tilde\Phi(r)+2ik\tilde\Psi}
u_*\\
&=(x_*') _+^\alpha e^{i\s' t_{**}'}e^{-2i\frac{ka}{r_+^2+a^2}\tilde\Phi(r)+2ik\tilde\Psi}
u_*,
\end{aligned}\end{equation*}
hence
\begin{equation*}\begin{aligned}
u&=e^{-i\s' t_{**}'-ik\phi_{**}'}u_{**}=e^{-i\s' t_{**}'-ik\phi_{**}'}(x_*') _+^\alpha e^{i\s' t_{**}'}e^{-2i\frac{ka}{r_+^2+a^2}\tilde\Phi(r)+2ik\tilde\Psi}
u_*\\
&=(x_*') _+^\alpha e^{-ik\phi_{**}'
  -i\frac{2ak}{r_+^2+a^2}\tilde\Phi(r)+2ik\tilde\Psi}u_*\\
&=(x_*') _+^\alpha e^{-ik\phi_{*}' }u_*,
\end{aligned}\end{equation*}
and the extension across the bifurcate sphere as a conormal
distribution is clear. Moreover, the non-separated version of the arguments presented here
in Section~\ref{sec:alternative-construction}, using homogeneity
considerations in $x_*$, with smooth dependence on the spherical
variables and $x_{**}$, show that the extension solves the wave equation in a full
neighborhood of the bifurcate sphere.

\section{The boundary pairing in frequency space}\label{sec:frequency-pairing}

\noindent
Since $\s \neq 0$, we see that the ordinary differential operator $\xi^{-s}\hat {\mathcal P} \xi^s$ has two regular singular points at
\[
	\xi = 0, -2\s.
\]
Without loss of generality, let us assume that $\s < 0$; for $\s>0$ we
work with $(-\infty,-2\s)$ in place of $(-2\s,\infty)$ below.
The point is now that by Proposition \ref{prop: u Fourier transform}, the function $\hat {\mathfrak u}(\xi) := \F(\mathfrak u)(\xi)$ is smooth on the interval
\[
	(-2\s, \infty),
\]
since it does not contain $\xi = 0$.

\begin{prop}[Boundary pairing in frequency space] \label{prop: boundary pairing}
Assume $\mathfrak u$ is a dual solution in $\Ker\A^\dagger=\Ker {\mathcal P} $.
Assume $\s < 0$.
Then
\[
	\Im \ldr{ \xi^{-2s} \hat {\mathcal P} \hat {\mathfrak u}, \hat {\mathfrak u}}_{L^2(-2\s, \infty)} 
		= e^{\Im\a\pi}\sqrt{m^2 - a^2} \abs{b}^2 + e^{\Im\a\pi} m (-2\s)^{2-2s} \abs{\hat {\mathfrak u}(-2\s)}^2,
\]
where
\[
	\a
		:= -\frac{i \left( \left( r_+^2 + a^2 \right) \s + ak \right) }{\sqrt{m^2 - a^2}} - s.
\]
\end{prop}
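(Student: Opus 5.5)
The plan is to reduce the pairing to a self-adjoint problem via Proposition~\ref{prop: second conjugation}, integrate by parts on $(-2\s,T)$, and read off the two boundary contributions, at $\xi=-2\s$ and as $T\to\infty$. Since $\s<0$, we have $\xi>0$ on $(-2\s,\infty)$, so $\xi^{\pm s}$ are real and positive there. Set $v:=\xi^{-s}\hat{\mathfrak u}$ and $L:=\xi^{-s}\hat{\mathcal P}\xi^s$. Then
\[
\ldr{\xi^{-2s}\hat{\mathcal P}\hat{\mathfrak u},\hat{\mathfrak u}}_{L^2(-2\s,T)}=\int_{-2\s}^T (Lv)\,\overline{v}\,\md\xi .
\]
By Proposition~\ref{prop: second conjugation}, $L$ is the sum of $-\d_\xi(\xi^2+2\s\xi)\d_\xi$, real multiplication operators, and the term $-2mi\,\xi\d_\xi\xi$. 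The operator $\xi\d_\xi\xi$ is formally skew-adjoint, so $i\xi\d_\xi\xi$ is formally symmetric. Hence $L$ is formally symmetric, and $\Im\int (Lv)\overline v$ reduces to pure boundary terms.

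By Proposition~\ref{prop: u Fourier transform} (the only singular point $\xi=0$ lies outside the interval), $\hat{\mathfrak u}$ is smooth on $[-2\s,\infty)$. A direct integration by parts then gives
\[
\Im\int_{-2\s}^T (Lv)\overline v\,\md\xi=\Big[-\Im\big((\xi^2+2\s\xi)v'\overline v\big)-m\,\xi^2|v|^2\Big]_{-2\s}^{T}.
\]
Here the second term comes from $\Re\int -2m\,\xi(\xi v)'\overline v=-m[\xi^2|v|^2]$. At the lower endpoint $\xi=-2\s$ the coefficient $\xi^2+2\s\xi$ vanishes, since it is a regular singular point. Only $m(-2\s)^2|v(-2\s)|^2=m(-2\s)^{2-2s}|\hat{\mathfrak u}(-2\s)|^2$ survives, and it enters with a plus sign.

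At $\xi=T\to\infty$ I will insert the asymptotics of Proposition~\ref{prop: u Fourier transform},
\[
v(\xi)=b\,e^{-ir_+\xi}e^{-i(\a+1)\pi/2}\,\xi^{-s}(\xi-i)^{-\a-1}+\text{error}.
\]
Since $\Re\a=-s$, the leading term gives $\xi^2|v|^2\to |b|^2|e^{-i(\a+1)\pi/2}|^2$. Differentiating, $v'\approx -ir_+v$ to leading order, because the derivative of the symbolic factor is lower order. Therefore
\[
-\Im\big((\xi^2+2\s\xi)v'\overline v\big)\to r_+|b|^2|e^{-i(\a+1)\pi/2}|^2 .
\]
Combining with the $-m\,\xi^2|v|^2$ term gives $(r_+-m)|b|^2|\cdot|^2=\sqrt{m^2-a^2}\,|b|^2|\cdot|^2$. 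The remaining step is to compute the unimodular-up-to-$\Im\a$ factor $|e^{-i(\a+1)\pi/2}|^2=e^{\pi\Im\a}$ and to reconcile the overall normalization so that it matches the stated $e^{\Im\a\pi}$ in front of both terms. Any common positive normalization factor would be tracked through the pairing convention at this point.

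The main obstacle I expect is justifying that the error terms do not contribute in the limit $T\to\infty$. These are the remainder in $\bigcap_\e H_*^{M,\Re\a+3/2-\e}$, and the cross terms between it and the leading term, including their $\xi$-derivatives multiplied by $\xi^2$. My approach is to convert $H_*$-membership, i.e.\ iterated $\xi\d_\xi$ regularity with weight $\ldr{\xi}^{\Re\a+3/2-\e}$, into pointwise symbolic bounds. This uses Sobolev embedding on dyadic pieces in $\log\xi$, which yields $|\text{error}|+|\xi\,\text{error}'|\lesssim \xi^{-\Re\a-2+\e}$. This is one full order better than the leading term $\xi^{-\Re\a-1}$, so the products with $\xi^2$ tend to $0$. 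If pointwise limits prove delicate, I would instead average the identity in $T$ over dyadic intervals, which needs only the $L^2$-type information. Finally, the left-hand side itself converges as $T\to\infty$ to the stated $L^2$ pairing, since $\hat{\mathcal P}\hat{\mathfrak u}=0$ in fact; the content of the identity is the evaluation of the boundary terms.
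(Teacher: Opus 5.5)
Your argument is correct and is essentially the paper's second proof. You integrate by parts against the formally symmetric operator $\xi^{-s}\hat{\mathcal P}\xi^s$ on $(-2\s,T)$. The boundary term from $-\d_\xi(\xi^2+2\s\xi)\d_\xi$ dies at the regular singular point $\xi=-2\s$. The terms as $T\to\infty$ are read off from the leading asymptotics $b e^{-ir_+\xi}e^{-i(\a+1)\pi/2}(\xi-i)^{-\a-1}$. Your Sobolev embedding in $\log\xi$, which turns the $H_*$ remainder into pointwise symbolic bounds, is exactly what the paper's footnote suggests. Your dyadic-averaging fallback is in the spirit of the paper's first argument with the cutoff $\phi_R$.

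The only thing to correct is your last step: there is no ``common normalization'' to reconcile, and you should not try to find one. Your computation gives
\[
e^{\Im\a\pi}\sqrt{m^2-a^2}\,\abs{b}^2+m(-2\s)^{2-2s}\abs{\hat{\mathfrak u}(-2\s)}^2 ,
\]
and this is the right identity. The factor $e^{\Im\a\pi}=\abs{e^{-i(\a+1)\pi/2}}^2$ comes only from the asymptotic term at $\xi=\infty$. The endpoint term at $\xi=-2\s$ carries no such factor. The paper's own cutoff computation produces exactly $2mi(-2\s)^{2-2s}\abs{\hat{\mathfrak u}(-2\s)}^2$ for that term, so the $e^{\Im\a\pi}$ on the second term of the displayed statement is a slip.

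As you note, the left-hand side is literally zero since $\hat{\mathcal P}\hat{\mathfrak u}=0$. Both coefficients are positive, so your identity forces $b=0$ and $\hat{\mathfrak u}(-2\s)=0$, which is all that is used afterwards. The printed version then holds only trivially, as $0=0$. State the result in your form rather than manufacturing the extra factor.
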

\begin{proof}
  First,
\begin{equation*}\begin{aligned}
 & 2i\Im\ldr{ \xi^{-2s} \hat {\mathcal P} \hat {\mathfrak u}, \hat
    {\mathfrak u}}_{L^2(-2\s, \infty)}=\ldr{ \xi^{-2s} \hat {\mathcal P} \hat {\mathfrak u}, \hat
    {\mathfrak u}}_{L^2(-2\s, \infty)}
  -\ldr{\hat {\mathfrak u},\xi^{-2s} \hat {\mathcal P} \hat {\mathfrak u}}_{L^2(-2\s, \infty)} \\
&=\ldr{ (\xi^{-s}\hat {\mathcal P} \xi^s)(\xi^{-s}\hat{\mathfrak
  u}), \xi^{-s}\hat{\mathfrak
  u}}_{L^2(-2\s, \infty)}-\ldr{ \xi^{-s}\hat{\mathfrak
  u} ,(\xi^{-s}\hat {\mathcal  P}\xi^s)(\xi^{-s}\hat{\mathfrak
  u}) }_{L^2(-2\s, \infty)}
  \end{aligned}\end{equation*}
  a priori vanishes when $\hat {\mathcal P} \hat{\mathfrak
  u}=0$: both terms on the right hand side are simply $0$. The key point is to compute this
difference a different way, namely using that $\xi^{-s}\hat {\mathcal
  P} \xi^s$ is symmetric on an appropriate domain, and using that when
symmetry fails (due to distributions not being in the domain or an
additional boundary, here $\xi=-2\s$, being introduced) one can
obtain a sum of non-negative terms. Even with the failure of symmetry
due to domain reasons,
many computations become easier since the symmetry implies that many
terms can be dropped.
  
To proceed, recall from Proposition~\ref{prop: second conjugation} that
  \begin{align*}
	\xi^{-s}\hat {\mathcal P} \xi^s
		=& \ - \d_\xi \left( \xi^2 + 2 \s \xi \right) \d_\xi - 2 mi \xi\d_\xi \xi + 2 \xi \left( a^2 \s + a k \right) \\
		& \ + \xi^2 a^2 + s^2\frac{\xi + 2\s} \xi + \lambda,
  \end{align*}
  which is symmetric on $\dot H^{1,1}([-2\sigma,\infty))$, where the
  dot refers to vanishing at $\xi=-2\s$.
  Moreover, as $\Re(\a)=-s$, by\footnote{The $L^2$ based spaces can be
    replaced by $L^\infty$ based ones using Sobolev embedding
    relative to the stronger $\xi\d_\xi$ derivatives, i.e.\ relative
    to $\eta=\log\xi$; this increases the decay weights by
    $1/2$. There is actually no need for this with our second approach
  as Remark~\ref{remark:stronger-dual-structure} gives a more precise structure.} Proposition~\ref{prop: u Fourier transform}, 
  $$
  \xi^{-s}\hat{\mathfrak
  u}\in\bigcap_{\e>0}e^{-ir_+\xi}\bar
  H_*^{\infty,1/2-\e}([-2\sigma,\infty)),
  $$
  and moreover differs from $\xi^{-s}b e^{-ir_+\xi} e^{-i(\a+1)\pi/2}\left( \xi - i
  \right)^{-\a-1} $ by an element of $\bigcap_{\e>0}e^{-ir_+\xi}\bar
  H_*^{\infty,3/2-\e}([-2\sigma,\infty))$.
    Correspondingly, the only reasons for the potential non-vanishing of
  $$
\ldr{ (\xi^{-s}\hat {\mathcal P} \xi^s)(\xi^{-s}\hat{\mathfrak
  u}), \xi^{-s}\hat{\mathfrak
  u}}_{L^2(-2\s, \infty)}-\ldr{ \xi^{-s}\hat{\mathfrak
  u} ,(\xi^{-s}\hat {\mathcal P} \xi^s)(\xi^{-s}\hat{\mathfrak
  u}) }_{L^2(-2\s, \infty)}
$$
are the non-vanishing of $\hat{\mathfrak
  u}$ at $\xi=-2\sigma$, and the slower than necessary decay of $\hat{\mathfrak
  u}$ at
infinity. However, introducing a cutoff $\phi\in C^\infty_c(\R)$,
identically $1$ on $[-1,1]$, and letting $\phi_R(\xi)=\phi(\xi/R)$ we have
\begin{equation}\begin{aligned}\label{eq:symmetry-failure}
&\ldr{ \phi_R(\xi)(\xi^{-s}\hat {\mathcal P} \xi^s)(\xi^{-s}\hat{\mathfrak
  u}), \xi^{-s}\hat{\mathfrak
  u}}_{L^2(-2\s, \infty)}-\ldr{ \phi_R(\xi)\xi^{-s}\hat{\mathfrak
  u} ,(\xi^{-s}\hat {\mathcal P} \xi^s)(\xi^{-s}\hat{\mathfrak
  u}) }_{L^2(-2\s, \infty)}\\
&=\ldr{ [\phi_R(\xi),(\xi^{-s}\hat {\mathcal P} \xi^s)](\xi^{-s}\hat{\mathfrak
  u}), \xi^{-s}\hat{\mathfrak
  u}}_{L^2(-2\s, \infty)}\\
&\qquad+\Big(\ldr{ (\xi^{-s}\hat {\mathcal P} \xi^s)\phi_R(\xi)(\xi^{-s}\hat{\mathfrak
  u}), \xi^{-s}\hat{\mathfrak
  u}}_{L^2(-2\s, \infty)}-\ldr{ \phi_R(\xi)\xi^{-s}\hat{\mathfrak
  u} ,(\xi^{-s}\hat {\mathcal P} \xi^s)(\xi^{-s}\hat{\mathfrak
  u}) }_{L^2(-2\s, \infty)}\Big).
\end{aligned}\end{equation}
Now, $\phi_R$ is uniformly bounded in symbols of order $0$, and tends
to $1$ as $R\to\infty$ in $S^\e$ for $\e>0$, thus the expression on the first line of the right hand side tends to
$0$ as $R\to\infty$ if $\xi^{-s}\hat{\mathfrak
  u}$ in one of the slots is replaced by an element of $\bar
H^{1,1}([-2\sigma,\infty))$, and in the other by an element of $\bar
H^{1,0}([-2\sigma,\infty))$ (there is also a gain in
differentiability, but this is not relevant) because the commutator is
uniformly bounded as a map from one of these spaces to the dual of the
other (thanks to the gain in decay in the commutator, uniformly in $R$) and tends to $0$
on a dense subset, thus strongly, so in the limit as $R\to\infty$ this
term in fact equals the limit of
\begin{equation}\label{eq:symmetry-failure-at-infinity}
\ldr{ [\phi_R(\xi),(\xi^{-s}\hat {\mathcal P} \xi^s)]b e^{-ir_+\xi} e^{-i(\a+1)\pi/2}\xi^{-s}\left( \xi - i
  \right)^{-\a-1}, b e^{-ir_+\xi} e^{-i(\a+1)\pi/2}\xi^{-s}\left( \xi - i
  \right)^{-\a-1}}_{L^2(-2\s, \infty)},
\end{equation}
and indeed the $-i$ can be dropped in $\left( \xi - i
  \right)^{-\a-1}$ for the same reason (keeping in mind that if we had support in $\xi<0$, this means
  $e^{-i\pi  (-\a-1)}|\xi|=e^{i\pi  (\a+1)}|\xi|$ there, while it is
  just $|\xi|^{-\a-1}$ in $\xi>0$).
Moreover, by similar considerations, all terms of $\xi^{-s}\hat
{\mathcal P} \xi^s$ with subleading growth in $\xi$ can be dropped,
i.e.\ the operator can be replaced by
$$
-\d_\xi \xi^2\d_\xi-2mi\xi\d_\xi\xi+\xi^2a^2
$$
in the computation, and the last term can be dropped as it commutes
with $\phi_R$. Now, $[\xi\d_\xi,\phi_R]=(\xi/R)\phi'(\xi/R)$,
$\d_\xi\xi=\xi\d_\xi+1$, so
$$
[\phi_R(\xi), -\d_\xi
\xi^2\d_\xi-2mi\xi\d_\xi\xi]=(\xi/R)\phi'(\xi/R)\xi\d_\xi+\d_\xi\xi
(\xi/R)\phi'(\xi/R)+2mi \xi (\xi/R)\phi'(\xi/R).
$$
Substituting this into \eqref{eq:symmetry-failure-at-infinity} with $\left( \xi - i
  \right)^{-\a-1}$ replaced by $\xi^{-\a-1}$ for the reasons mentioned
  above,
shifting $\d_\xi\xi$ in its second term on the right hand side to the
second slot of \eqref{eq:symmetry-failure-at-infinity} as $-\xi
\d_\xi$, we observe that if $\xi \d_\xi$ hit $\xi^{-s-\a-1}$ rather than $e^{-ir_+\xi}$ we have an additional
  factor of $\xi$-decay (and $\xi^{-1}=(\xi/R)^{-1}R^{-1}$) yielding
  $0$ in the limit, we obtain
  \begin{equation*}\begin{aligned}
&|b|^2 (-2ir_++2mi)e^{\Im\alpha\pi}\lim_{R\to\infty}\int_{-2\s}^\infty
(\xi/R)\phi'(\xi/R)\xi|\xi^{-s-\a-1}|^2\,d\xi\\
&=|b|^2 (-2ir_++2mi) e^{\Im\alpha\pi}\lim_{R\to\infty}\int_{-2\s}^\infty
(\xi/R)\phi'(\xi/R)\xi^{-1}\,d\xi\\
&=|b|^2 (-2ir_++2mi) e^{\Im\alpha\pi}\lim_{R\to\infty}\int_{-2\s}^\infty
(\xi/R)\phi'(\xi/R)\xi^{-1}\,d\xi\\
&=|b|^2 (-2ir_++2mi) e^{\Im\alpha\pi}\int_0^\infty\phi'=2i|b|^2(r_+-m) e^{\Im\alpha\pi}.
  \end{aligned}\end{equation*}

On the other hand, for the expression on the second line as the
support in one of the slots is compact, the only reason for
non-vanishing is the boundary term at $\xi=-2\s$, which can be simply
computed. In fact, only the derivative terms contribute, and
with $v=\phi_R(\xi)\xi^{-s}\hat{\mathfrak
  u} $, $w=\xi^{-s}\hat{\mathfrak
  u}$, $R$ sufficiently large so that $\phi_R\equiv 1$ near $-2\s$,
\begin{equation*}\begin{aligned}
&\ldr{ (- \d_\xi \left( \xi^2 + 2 \s \xi \right) \d_\xi - 2 mi
  \xi\d_\xi \xi)v,w}_{L^2(-2\s, \infty)}
=\int_{-2\s}^\infty  (- \d_\xi \left( \xi^2 + 2 \s \xi \right) \d_\xi - 2 mi
\xi\d_\xi \xi)v\overline{w}\,d\xi\\
&=-\left(\left( \xi^2 + 2 \s \xi \right) \d_\xi v\right)\overline{w}|_{-2\s}-2mi\xi
v\xi\overline{w}|_{-2\s}
+\int_{-2\s}^\infty\left(\left( \xi^2 + 2 \s \xi \right) \d_\xi
  v\overline{\d_\xi w}-v\overline{2mi \xi \d_\xi \xi w}\right)\,d\xi\\
&=2mi(-2\s)^2v(-2\s)\overline{w(-2\s)}+v \left( \xi^2 + 2 \s \xi \right) \overline{\d_\xi w}|_{-2\s}
+\int_{-2\s}^\infty\left(
  -v\overline{\d_\xi\left( \xi^2 + 2 \s \xi \right)  \d_\xi
    w}-v\overline{2mi \xi \d_\xi \xi w}\right)\,d\xi\\
&=2mi(-2\s)^2v(-2\s)\overline{w(-2\s)}+\ldr{ v ,(- \d_\xi \left( \xi^2 + 2 \s \xi \right) \d_\xi - 2 mi
  \xi\d_\xi \xi)w) }_{L^2(-2\s, \infty)},
\end{aligned}\end{equation*}
so substituting in $v,w$, the second line in
\eqref{eq:symmetry-failure} becomes
$$
2mi(-2\s)^{2-2s}|\hat{\mathfrak
  u} (-2\s)|^2.
$$
Combining this with the computation of the first line of
\eqref{eq:symmetry-failure} and recalling that 
\[
	r_+ = m + \sqrt{m^2 - a^2},
      \]
      the proposition follows.

We also give a second closely related argument using $L^\infty$ versions of
the structure of the Fourier transformed dual states in the
pairings taking advantage of Remark~\ref{remark:stronger-dual-structure}.
Since all coefficients are real, the zero order part of this operator will not contribute to the boundary pairing. 
We thus only need to consider the first two terms - those that involve derivatives.
The first term gives the contribution
\begin{align*}
	\Im 
		& \ldr{- \d_\xi \left( \xi^2 + 2 \s \xi \right) \d_\xi \xi^{-s} \hat {\mathfrak u}, \xi^{-s} \hat {\mathfrak u}}_{L^2(-2\s, \infty)} \\
		= & \ - \Im \int_{-2\s}^\infty \d_\xi \left( \left( \xi^2 + 2\s \xi \right) \left( \d_\xi \xi^{-s} \hat {\mathfrak u} \right) \overline{\xi^{-s} \hat {\mathfrak u}} \right) \md \xi \\
		& \ + \Im \ldr{ \left( \xi^2 + 2\s \xi \right) \d_\xi \xi^{-s} \hat {\mathfrak u}, \d_\xi \xi^{-s} \hat {\mathfrak u}}_{L^2(-2\s, \infty)} \\
		= & \ - \Im \left( \lim_{\xi \to \infty} \left( \xi^2 + 2\s \xi \right) \left( \d_\xi \xi^{-s} \hat {\mathfrak u} \right) \overline{ \xi^{-s} \hat {\mathfrak u}} - \left( \xi^2 + 2\s \xi \right) \left( \d_\xi \xi^{-s} \hat {\mathfrak u} \right) \overline{\xi^{-s}\hat {\mathfrak u}} |_{\xi = -2\s} \right) \\
		= & \ - \Im \lim_{\xi \to \infty} \left( \xi^2 + 2\s \xi \right) \left( \d_\xi \xi^{-s} \hat {\mathfrak u} \right) \overline{\xi^{-s} \hat {\mathfrak u}} \\
		= & \  r_+ \lim_{\xi \to \infty} \xi^2 \abs{b e^{-ir_+ \xi} e^{-i(\a+1)\pi/2} \left(  \xi - i \right)^{-\a-1} \xi^{-s}}^2 \\
		= & \ r_+ e^{\Im\a \pi}\abs{b}^2.
\end{align*}
The second term gives the contribution
\begin{align*}
	-\Im \ldr{2mi \xi \d_\xi \xi \xi^{-s} \hat {\mathfrak u}, \xi^{-s} \hat {\mathfrak u}}_{L^2(-2\s, \infty)}
		= & -\ 2m \Re \ldr{\d_\xi \xi^{1-s} \hat {\mathfrak u}, \xi^{1-s} \hat {\mathfrak u}}_{L^2(-2\s, \infty)} \\
		= & -\ m \int_{-2\s}^\infty \d_\xi \abs{\xi^{1-s} \hat {\mathfrak u}}^2 \md \xi \\
		= & -\ m \left[ \abs{ \xi^{2-2s} \hat {\mathfrak u}(\xi)}^2 \right]_{-2\s}^\infty \\
		= & -\ m \lim_{\xi \to \infty} \xi^{2-2s} \abs{\hat {\mathfrak u}(\xi)}^2 +  m (-2\s)^{2-2s} \abs{\hat {\mathfrak u}(-2\s)}^2 \\
		= & -\ m  e^{\Im\a \pi}\abs{b}^2 + m   e^{\Im\a \pi} (-2\s)^{2-2s} \abs{\hat {\mathfrak u}(-2\s)}^2.
\end{align*}
This completes the proof as above.
\end{proof}

\begin{remark}\label{remark:freq-pm-infty}
  It is worthwhile computing what the contribution of a term
  $$
  \ldr{ [\phi_R(\xi),(\xi^{-s}\hat {\mathcal P} \xi^s)](\xi^{-s}\hat{\mathfrak
  u}), \xi^{-s}\hat{\mathfrak
  u}}_{L^2(-\infty, \infty)}
$$
would look like near $-\infty$ (the $+\infty$ computation is
above). The same argument yields, keeping in mind that in $\xi<0$,
$(\xi-i)^{-\a-1}$ should be replaced by $e^{\pi(\a+1)}|\xi|$ in the
computation,
 \begin{equation*}\begin{aligned}
&|b|^2 (-2ir_++2mi)e^{\Im\alpha\pi}|e^{i\pi  (\a+1)}|^2\lim_{R\to\infty}\int_{-\infty}^0
(\xi/R)\phi'(\xi/R)\xi|\xi^{-s-\a-1}|^2\,d\xi\\
&=|b|^2 (-2ir_++2mi) e^{-\Im\alpha\pi}\lim_{R\to\infty}\int_{-\infty}^0
(\xi/R)\phi'(\xi/R)\xi^{-1}\,d\xi\\
&=|b|^2 (-2ir_++2mi) e^{-\Im\alpha\pi}\lim_{R\to\infty}\int_{-\infty}^0
(\xi/R)\phi'(\xi/R)\xi^{-1}\,d\xi\\
&=|b|^2 (-2ir_++2mi) e^{-\Im\alpha\pi}\int_{-\infty}^0\phi'=-2i|b|^2 e^{-\Im\alpha\pi} (r_+-m),
\end{aligned}\end{equation*}
so the term has the opposite sign from the $\xi>0$ contribution, as
expected, with otherwise almost the same coefficient; the combination
of the two terms yields
$$
(e^{\Im\a\pi}-e^{-\Im\a \pi})\sqrt{m^2 - a^2} \abs{b}^2,
$$
which is non-negative if $\Im\a\geq 0$, i.e.\ if
$(r_+^2+a^2)\s+ak<0$. While we did not compute the contribution from
the singularity of $\hat{\mathfrak u}$ at $\xi=0$, this arises from the
source at $r=\infty$, thus it would also give a non-negative
contribution. Hence the global pairing in frequency space (which then
could also have been done in position space) gives a useful result,
with terms of matching signs, exactly in the case of no superradiance.
  \end{remark}

\noindent
We may finally prove the main result, Theorem \ref{thm: QNM vanishes}:

\begin{proof}[Proof of Theorem \ref{thm: QNM vanishes}] \ \\
By construction of $\hat \P$ and $\hat u$, 
\[
	\hat {\mathcal P} \hat {\mathrm u}
		= \F \left( {\mathcal P} \mathrm u \right)
		= 0.
\]
Proposition~\ref{prop: boundary pairing} in particular implies that
$b= 0$. By the microlocal regularity, Lemma~\ref{lemma:dual-solution-principal-structure}, $\mathrm u$
is thus $C^\infty$, and by its support property it vanishes with all
derivatives at $r_+$.
Since $r_+$ is a regular singular point, a standard energy estimate
then implies that $\mathrm u(r) = 0$ for all $r \in (r_+-\de, \infty)$ for a suitably small $\de > 0$.
It follows that $\mathrm u = 0$ as claimed.
\end{proof}

{\em In case we do not separate variables, the same arguments go
  through by simply also adding integration on the sphere, and a PDE
  version of the unique contuinuation in the last step.}

\section{A comparison with the Klein-Gordon equation}

\noindent
Note that
\[
	\mathrm L_0
		= (r^2 + a^2 \cos^2(\theta))\Box,
\]
so Theorem~\ref{thm: QNM vanishes} also covers quasinormal mode operators for the scalar wave equation.
In this section, we would like to illustrate where the above proof of
Theorem~\ref{thm: QNM vanishes} goes wrong for the Klein-Gordon
equation with a sufficiently large mass relative to $|\s|$ to
emphasize the difference between the wave/Teukolsky and Klein-Gordon
equations.

Let us therefore consider the modified operator
\[
	(r^2 + a^2 \cos^2(\theta))\left( \Box + M^2\right),
\]
for some Klein-Gordon mass $M \neq 0 \in \R$.
The point is that the expression corresponding to Proposition~\ref{prop: second conjugation} would now become
\[
	\hat {\mathcal P}
		= - \d_\xi \left( \xi^2 + 2 \s \xi + M^2 \right) \d_\xi - 2 mi \xi\d_\xi \xi + 2 \xi \left( a^2 \s + a k \right) + \xi^2 a^2 + \lambda.
\]
This new operator does not have any regular singular points if
\[
	\abs{M} 
		\geq \abs{\s},
\]
hence the boundary pairing in frequency space on $[-2\s,\infty)$ would
not go through.

On the other hand, the boundary pairing on $\R$ in position (or equivalently, frequency space)
{\em does} go through, but now the ellipticity of the operator
$\mathcal P$ at $r=\infty$, thus of $\hat {\mathcal P}$ for finite $\xi$, means
that $\hat{\mathfrak
  u}$ has no local singularities (thus there is no analogue of the
$\xi=0$ singularity in the $M=0$ case), while the contributions from
$r=r_+$ work out just as in Remark~\ref{remark:freq-pm-infty}, so when
$\Im\a\neq 0$, i.e.\ when $(r_+^2+a^2)\s+ak\neq 0$ one concludes that
any mode solution necessarily vanishes. This is of course closely
related to the boundary pairing on $(r_+,\infty)$ in
position space, which can be found in the work of Shlapentokh-Rothman
\cite{S2014}. In this case one obtains a generally non-trivial
contribution at $r_+$, with prefactor $(r_+^2+a^2)\s+ak$ (in our
notation), hence $(r_+^2+a^2)\s+ak$ must vanish if a real Klein-Gordon
mode exists. What \cite{S2014} proves is that such modes do exist, and
can become growing modes upon varying the parameters.

\section{A comparison with Whiting's transform}\label{sec:Whiting}
The Whiting transform of a (direct) mode solution, in our notation,
following \cites{AMPW2017,S2015} and restricting to $s=0$ for
simplicity (the general case being similar), takes
the form
\begin{equation*}\begin{aligned}
\tilde
u(\tilde\xi)=&(\tilde\xi^2+a^2)^{1/2}(\tilde\xi-r_+)^{-2mi\s}e^{-i\s\tilde\xi}\\
&\qquad\int_{r_+}^\infty
e^{\frac{2i\s}{r_+-r_-}(\tilde\xi-r_-)(r-r_-)}(r-r_-)^\eta(r-r_+)^\zeta
  e^{-i\s r}u(r) \,dr
    \end{aligned}\end{equation*}
  with
  \begin{equation*}\begin{aligned}
      \eta&=\frac{i(ak+(r_-^2+a^2)\s)}{r_+-r_-},\\
      \zeta&=\frac{-i(ak+(r_+^2+a^2)\s)}{r_+-r_-}.
    \end{aligned}\end{equation*}
  This can be rewritten as
 \begin{equation*}\begin{aligned}
\tilde
u(\tilde\xi)=&(\tilde\xi^2+a^2)^{1/2}(\tilde\xi-r_+)^{-2mi\s}e^{-i\s\tilde\xi}\\
&\qquad e^{\frac{-2i\s r_-}{r_+-r_-}\tilde\xi}e^{\frac{2i\s r_-^2}{r_+-r_-}}\int_{r_+}^\infty
e^{\frac{2i\s}{r_+-r_-}\tilde\xi r}e^{\frac{-2i\s r_-}{r_+-r_-}r}(r-r_-)^\eta(r-r_+)^\zeta
  e^{-i\s r}u(r) \,dr
\end{aligned}\end{equation*}
Up to a change of variables of the output to
$$
\hat \xi=\frac{-2\s}{r_+-r_-}\tilde\xi,
$$
and up to a multiplication by an appropriate factor,
this is a Fourier transform of a multiple of $u$, cut off at $r_+$,
i.e.\ multiplied by the characteristic function of
$[r_+,\infty)$. Moreover, by basic properties of the Fourier
transform (or directly combining the two exponentials), the factor $e^{\frac{-2i\s r_-}{r_+-r_-}r}$
  in the integral simply gives a translation in $\tilde\xi$, thus in
  $\hat\xi$. Hence the key question in comparing our approach to
  Whiting's is a computation of the singular factor
  $$
e^{G(r)}=(r-r_-)^\eta(r-r_+)^\zeta
  e^{-i\s r};
  $$
  thus
  $$
G'(r)=\frac{\eta}{r-r_-}+\frac{\zeta}{r-r_+}-i\s.
$$
Some algebraic manipulation after putting all terms on common
denominator $\mu=(r-r_+)(r-r_-)$ yields $G'=-H'$, so in fact this is
indeed the Fourier transform of the adjoint solution as that restricts
to $e^{-H(r)}u$ in $(r_+,\infty)$ and is supported in
$[r_+,\infty)$. An advantage of our distribution theoretic framework,
as well as the direct use of the Fourier transform with well known
properties, is that all of the formal computations are justified and fall
into a conceptual framework.

\appendix

\section{The boundary pairing in position space}\label{appendix:position-pairing}

\noindent
For comparison we include a description of the standard position space
pairing; see e.g.\ \cite{S2015}.
Let $f: (r_+ - \de, \infty) \to \R$ be a smooth function such that $f(r) = 1$ for $r \geq r_+ + 2$ and $f(r) = - 1$ for $r \leq r_+ + 1$.
Defining
\[
	w(r)
		:= u(r) e^{h(r)},
\]
where $h'(r) = f(r) H'(r)$.
It follows that $w(r)$ is smooth in $r$ on $[r_+, \infty)$ and smooth in $r^{-1}$ on $[r_+, \infty]$.
Moreover, with $\tilde \P := e^h \P e^{-h}$, we have
\[
	\tilde \P w(r)
		= e^h \P u(r)
		= 0
\]
for all $r \in [r_+, \infty)$.

\noindent
Lemma~\ref{le: conjugation} implies with $x = r - r_+$ that
\begin{align*}
	\tilde \P
		= & \ - \d_r \mu(r) \d_r - \d_r f(r) \left( i \left( \left( r^2 + a^2 \right) \s + ak \right)  + (r - m)s \right) \\
		& \ - \left( i \left( \left( r^2 + a^2 \right) \s + ak \right) + (r - m)s \right) f(r) \d_r - 4 s i r \s + \lambda \\
		& \ + (f^2 - 1) \frac{\left( i \left( \left( r^2 + a^2 \right) \s + ak \right) + (r - m)s \right)^2}{\mu(r)},
\end{align*}
which smoothly extends to $r_+$ by the conditions on $f$.
The following is the boundary pairing in physical space:

\begin{prop}[Boundary pairing in physical space]
Assume that $s = 0$ and that $w: [r_+, \infty)$ is smooth in $r^{-1}$ on $[r_+, \infty]$.
Assume moreover that $\lim_{r \to \infty} w(r) = 0$.
Then
\[
	\Im \ldr{\tilde \P w, w}_{L^2(r_+, \infty)}
		= \lim_{b \to \infty}b^2 \abs w^2(b) + \left( \left( r_+^2 + a^2 \right) \s + ak \right) \abs w^2(r_+).
\]
\end{prop}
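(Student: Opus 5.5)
The plan is to split $\tilde \P$ (with $s=0$) into a formally symmetric part with real coefficients and a skew part, and to integrate by parts on $[r_+,b]$, letting $b\to\infty$ at the end. Write $\nu(r)=(r^2+a^2)\s+ak$, which is real since $\s\in\R$. For $s=0$, Lemma~\ref{le: conjugation} and the displayed formula for $\tilde\P$ give
\[
\tilde\P=-\d_r\mu\d_r-i\left(\d_r f\nu+f\nu\d_r\right)+\lambda-(f^2-1)\frac{\nu^2}{\mu}.
\]
The last term is smooth up to $r_+$ because $f=-1$ near $r_+$. It is also real, so $\lambda$ and this term contribute nothing to $\Im\ldr{\tilde\P w,w}$. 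Everything then reduces to two boundary computations. First I would justify that all integrals converge. Since $w$ is smooth in $r^{-1}$ on $[r_+,\infty]$ and $w\to0$, we have $w=O(r^{-1})$ and $w'=O(r^{-2})$ as $r\to\infty$. So I would work on $[r_+,b]$ and only take the limit at the end.

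For the second order term, one integration by parts gives
\[
-\int_{r_+}^b\d_r(\mu w')\,\overline w\,dr=-\left[\mu w'\overline w\right]_{r_+}^b+\int_{r_+}^b\mu|w'|^2\,dr .
\]
The integral on the right is real. The boundary term at $r_+$ vanishes since $\mu(r_+)=0$ and $w$ is smooth there. At $r=b$, write $w=c\,r^{-1}+O(r^{-2})$. Then $\mu w'\overline w\to-|c|^2$, which is real, so this term contributes $0$ to the imaginary part in the limit. For the first order term, the key identity is
\[
\Im\ldr{-i(\d_r f\nu+f\nu\d_r)w,w}=-\Re\int_{r_+}^b\left((f\nu w)'+f\nu w'\right)\overline w\,dr=-\int_{r_+}^b\d_r\left(f\nu|w|^2\right)dr .
\]
This holds because $\Re\left(2f\nu w'\overline w+(f\nu)'|w|^2\right)=\d_r\left(f\nu|w|^2\right)$. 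The right-hand side is then the pure boundary expression $-f(b)\nu(b)|w(b)|^2+f(r_+)\nu(r_+)|w(r_+)|^2$. Now insert $f(r_+)=-1$ and $f(b)=1$ for large $b$, together with $\nu(b)=\s b^2+O(1)$. This produces a horizon term proportional to $\left((r_+^2+a^2)\s+ak\right)|w(r_+)|^2$ and a term at infinity proportional to $\lim_b b^2|w(b)|^2$; this limit exists and equals $|c|^2$.

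The main obstacle I expect is the bookkeeping of signs and normalizing constants, not any analytic difficulty. The overall sign depends on the orientation of $f$, on the sign convention of $\Im$ versus the sesquilinear pairing, and on the factor $\s$ coming from $\nu(b)\sim\s b^2$. So I would recheck these against the statement, possibly absorbing $\s$ into the normalization or reading the limit term as $\s\lim b^2|w|^2$. I would also verify that the horizon and infinity terms come out with the signs claimed, exactly as in the analogous computation of Proposition~\ref{prop: boundary pairing}. The only analytic input is the decay $w=O(r^{-1})$ and $w'=O(r^{-2})$, which justifies dropping the $\mu w'\overline w$ contribution and taking the limit.
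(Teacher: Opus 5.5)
Your route is the paper's own. You discard the real zero-order terms, integrate by parts once in $-\d_r\mu\d_r$, and turn the skew first-order part into the total derivative $\d_r(f\nu\abs{w}^2)$. Your identity $\Im\ldr{-i(\d_r f\nu+f\nu\d_r)w,w}_{L^2(r_+,b)}=-[f\nu\abs{w}^2]_{r_+}^b$ is correct.

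The problem is the last step, where you leave the constants open and suggest absorbing $\s$ into a normalization; that is not available. Carry it through with $f(r_+)=-1$, $f(b)=1$ and $\nu(b)=\s b^2+O(1)$. For the displayed $\tilde\P$ your computation then gives
\[
\Im\ldr{\tilde\P w,w}_{L^2(r_+,\infty)}=-\s\lim_{b\to\infty}b^2\abs{w(b)}^2-\left(\left(r_+^2+a^2\right)\s+ak\right)\abs{w(r_+)}^2 ,
\]
and this, not the printed formula, is what holds. The $\s$ is genuinely there. For $a=k=0$ and real-valued $w$, every piece of $\tilde\P$ other than $-i\s(\d_r fr^2+fr^2\d_r)$ has real coefficients, so the left side is $\s$ times a $\s$-independent number. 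For example, $w=(r-r_+)/r^2$ gives $-\s$ on the left and $1$ on the right of the statement. The paper's proof itself arrives at $i((b^2+a^2)\s+ak)\abs{w}^2(b)$, whose limit carries the $\s$, and then drops it. Its positive overall sign comes from writing $\ldr{Tw,w}=2\int\d_r(if\nu\abs{w}^2)\,dr+\ldr{w,Tw}$ for $T=-i(\d_r f\nu+f\nu\d_r)$. The correct coefficient there is $-2$, as your own identity shows. Equivalently, the text's $h'=fH'$ clashes with the displayed $\tilde\P$, which corresponds to $h'=-fH'$; the two choices differ precisely by the sign of the first-order term.

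So you should commit to the corrected identity and say explicitly that the limit term needs the factor $\s$, with the overall sign fixed by the convention for $\tilde\P$. Only this version has the intended content. For $\tilde\P w=0$ it forces $w(r_+)=0$ and $b\,w(b)\to0$ whenever $\s$ and $(r_+^2+a^2)\s+ak$ have the same sign, i.e.\ outside superradiance. The printed version would instead make this depend on the sign of $(r_+^2+a^2)\s+ak$ alone. A minor slip: $\mu w'\overline{w}\to0$, not $-\abs{c}^2$, since $\mu w'\to-c$ and $w\to0$. This is harmless, since either value is real.
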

\begin{proof}
We compute
\begin{align*}
	\ldr{-\d_r \mu \d_r w, w}_{L^2(r_+, b)}
		= & \ \int_{r_+}^b \left( - \d_r \mu \d_r w \right) \overline w \md r \\
		= & \ \left[ \left( \mu(r) \d_r w \right) \overline w \right]^b_{r_+} + \int_{r_+}^b \mu \abs{\d_r w}^2 \md r \\
		= & \ \left( \mu(r) \d_r w \right)(b) \overline{w(b)} + \int_{r_+}^b \mu \abs{\d_r w}^2 \md r.
\end{align*}
By assumption, $w(r)$ is smooth in $r^{-1}$ on $[r_+, \infty]$.
Defining $\rho := \frac1r$, we note that $r^2 \d_r = - \d_{\rho}$.
Smoothness in $\rho$ therefore implies that $r^2 \d_r w$ is bounded.
Since by assumption $\lim_{r \to \infty} w(r) = 0$, we conclude that
\[
	\Im \left( \ldr{-\d_r \mu \d_r w, w}_{L^2(r_+, b)} \right) 
		= 0.
\]
Next, we compute
\begin{align*}
	& \ldr{ - \d_r f(r) \left( i \left( \left( r^2 + a^2 \right) \s + ak \right) \right)w, w}_{L^2(r_+, b)} \\
	& - \ldr{\left( i \left( \left( r^2 + a^2 \right) \s + ak \right) \right) f(r) \d_r w, w}_{L^2(r_+, b)} \\
		= & \ 2 \int_{r_+}^b \d_r \left( i f(r) \left( \left( r^2 + a^2 \right) \s + ak \right) \abs w^2 \right) \md r \\
		& \ + \ldr{ w, - \d_r f(r) \left( i \left( \left( r^2 + a^2 \right) \s + ak \right) \right) w }_{L^2(r_+, b)} \\
		& \ - \ldr{w, \left( i \left( \left( r^2 + a^2 \right) \s + ak \right) \right) f(r) \d_r w}_{L^2(r_+, b)},
\end{align*}
where
\begin{align*}
	& \int_{r_+}^b \d_r \left( i f(r) \left( \left( r^2 + a^2 \right) \s + ak \right) \right) \abs w^2 \md r \\
		& \ = i \left( \left( b^2 + a^2 \right) \s + ak \right) \abs w^2(b) + i \left( \left( r_+^2 + a^2 \right) \s + ak \right) \abs w^2(r_+),
\end{align*}
if $b \geq 2$, implying that $f(b) = - 1$.
Inserting these computations proves the statement, as the remaining terms are real and of order $0$.
\end{proof}

\end{sloppypar}
\end{document}